\def\ps@pprintTitle{%
  \let\@oddhead\@empty
  \let\@evenhead\@empty
  \def\@oddfoot{\reset@font\hfil\thepage\hfil}
  \let\@evenfoot\@oddfoot}
\newtheorem{defin}{Definition}[section]
\newtheorem{theorem}[defin]{Theorem}
\newtheorem{lemma}[defin]{Lemma}
\newtheorem{proposition}[defin]{Proposition}
\newtheorem{corollary}[defin]{Corollary}
\newtheorem{qu}{Question}
\newtheorem{remark}[]{Remark}
\begin{document}

\begin{frontmatter}

\title{The quotient spaces of topological groups with a $q$-point\corref{cor2}}


\author[mymainaddress]{Li-Hong Xie}
\ead{yunli198282@126.com}

\author[mymainaddress]{Hai-Hua Lin}
\ead{1048363420@qq.com}

\author[mymainaddress1]{Piyu Li\corref{cor1}}
\ead{lpy91132006@aliyun.com}


\cortext[cor1]{Corresponding author}
\cortext[cor2]{The project is supported by the Natural Science Foundation of Guangdong Province under Grant (No. 2021A1515010381) and the Innovation Project of Department of Education of Guangdong Province (No. 2022KTSCX145).}
\address[mymainaddress]{School of Mathematics and Computational Science, Wuyi University, Jiangmen, Guangdong, 529000, China}
\address[mymainaddress1]{School of Mathematics and Statistics, Xuzhou Institute of Technology, Xuzhou 221018, China}

\begin{abstract}
In this paper, we study the uniformities on the double coset spaces in topological groups. As an implication, the quotient spaces of topological groups with a $q$-point are studied. It mainly shows that: (1) Suppose that $G$ is a topological group  with a $q$-point and $H$ is a closed subgroup of $G$; then the quotient space $G/H$ is an open and quasi-perfect preimage of a metrizable space; in particular, $G/H$ is an $M$-space. (2) Suppose that $G$ is a topological group with a strict $q$-point and $H$ is a closed subgroup of $G$; then the quotient space $G/H$ is an open and sequentially perfect preimage of a metrizable space. (3) Suppose that $G$ is a topological group with a strong $q$-point and $H$ is a closed subgroup of $G$; then the quotient space $G/H$ is an open and strongly sequentially perfect preimage of a metrizable space. 
\end{abstract}

\begin{keyword}
Topological groups; double coset spaces; $q$-points; strict $q$-points; strong $q$-points

\MSC[2020] 54A20; 54H11; 54B15; 54C10; 54E15
\end{keyword}

\end{frontmatter}

\section{Introduction}
A group $G$ endowed with a topology $\tau$ is called a {\it paratopological group} if multiplication in $G$ is continuous as a mapping of
$G\times G$ to $G$, where $G\times G$ carries the usual product topology. A {\it topological group} is a paratopological group with continuous inversion. All topological groups in this paper are assumed to be Hausdorff.

A Tychnoff space $X$ is \v{C}ech-complete if $X$ is homeomorphic to a $G_\delta$-set
in a compact space. Feathered topological groups are very important in topological algebra. Recall that a topological group G is feathered if $G$ has a non-empty compact set $K$ of countable character in $G$. All \v{C}ech-complete topological groups ˇ
are feathered. It is also clear that all metrizable topological groups are feathered. It
is a deep result of M.M. Choban that a feathered topological group is \v{C}ech-complete if and only if it is Ra\v{\i}kov complete. B.A. Pasynkov proved that a topological group $G$ is feathered if and only if $G$ contains a compact subgroup $H$ such that the quotient space $G/H$ is metrizable. By this result, we have that a topological group $G$ is feathered iff $G$ is a paracompact $p$-space. Further more, Arhangel'skii and Tkachenko proved that: suppose that $H$ is a closed subgroup of a feathered group $G$. Then the quotient space $G/H$ is a paracompact space \cite[Theorem 4.3.23]{AT1}. In fact, in view of the proof of this theorem, they have proved the following result:

\begin{theorem} \label{the1.1}
Suppose that $H$ is a closed subgroup of a feathered group $G$. Then the quotient space $G/H$ is a perfect preimage of a metrizable space.
\end{theorem}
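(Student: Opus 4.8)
The plan is to use Pasynkov's theorem (quoted above) to replace the abstract feathering hypothesis by a concrete compact subgroup, and then to realise $G/H$ as a perfect preimage of a double coset space that can be metrized directly. First I would invoke Pasynkov's theorem: since $G$ is feathered there is a compact subgroup $N\le G$ with $G/N$ metrizable. Let $M=N\backslash G/H$ denote the space of double cosets $NgH$, topologised as the quotient of $G/H$ by the left action $n\cdot gH=(ng)H$ of $N$, and let $\rho\colon G/H\to M$, $\rho(gH)=NgH$, be the orbit projection. I claim that $\rho$ is perfect and that $M$ is metrizable; granting this, $G/H$ is a perfect preimage of the metrizable space $M$, which is exactly the assertion.

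The perfectness of $\rho$ I expect to be the routine part. Since $H$ is closed, $G/H$ is a Tychonoff space on which the compact group $N$ acts continuously, and $M$ is precisely the orbit space of this action. Each fibre $\rho^{-1}(NgH)=\{ngH:n\in N\}$ is the continuous image of the compact set $N$, hence compact, and the orbit map of a continuous action of a compact group on a Hausdorff space is closed; thus $\rho$ is a continuous closed surjection with compact fibres, i.e. perfect. (Alternatively one checks closedness directly by a tube-lemma argument using the compactness of $N$.)

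The heart of the argument, and the step I expect to be the main obstacle, is the metrizability of $M$. Since the inversion $gN\mapsto Ng^{-1}$ is a homeomorphism of $G/N$ onto the right coset space $N\backslash G$, the latter is metrizable, and because its natural uniformity is invariant under the right translations $Ng\mapsto Ngs$ and (by metrizability) has a countable base, the standard metrization lemma yields a compatible metric $\bar d$ on $N\backslash G$ invariant under these right translations. The subgroup $H$ then acts on $N\backslash G$ on the right by $\bar d$-isometries, with orbit space $M$ and open orbit map $\rho'\colon N\backslash G\to M$, so I would define a quotient pseudometric $D(NgH,Ng'H)=\inf_{h\in H}\bar d(Ngh,Ng')$. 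Two things must be checked: that $D$ induces the quotient topology of $M$, and that it is in fact a metric. For the first, the inequality $D(\rho'x,\rho'y)\le\bar d(x,y)$ shows the $D$-topology is coarser than the quotient topology, while the openness of $\rho'$ together with the $H$-invariance of $\bar d$ gives the reverse inclusion (if $B_{\bar d}(x,r)$ lies in $\rho'^{-1}(U)$ for an open $U$, then $D(\rho'x,q)<r$ forces a representative $yh$, $h\in H$, inside $B_{\bar d}(x,r)$, whence $q\in U$, so $B_D(\rho'x,r)\subseteq U$). For the second, note that $NgH$ is the product of the compact set $N$ and the closed set $gH$, hence closed in $G$ and $N$-saturated, so its image is closed in $M$; thus $M$ is $T_1$, and a pseudometric inducing a $T_1$ topology is a metric. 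This produces a compatible metric on $M$ and completes the proof. The delicate points to get right are the existence of the invariant metric $\bar d$ and the verification that $D$ reproduces exactly the quotient topology rather than a strictly coarser one.
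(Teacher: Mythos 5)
Your proof is correct, and it shares the paper's skeleton --- Pasynkov's theorem supplies the compact subgroup $N$ with $G/N$ metrizable, the double coset space $N\backslash G/H$ is the metrizable image, and the canonical map $\rho\colon G/H\to N\backslash G/H$ is shown to be perfect --- but you implement the two key steps by genuinely different means. For closedness of $\rho$, the paper (Proposition \ref{Pro1}) uses strong neutrality of the subgroup (every compact subgroup is strongly neutral, by the Wallace-type argument that an open set containing $N$ absorbs $NV$ for some neighborhood $V$ of $e$), whereas you invoke the fact that the orbit map of a continuous compact-group action is closed; both work, and your tube-lemma version is the more elementary and more general route, though it uses compactness of $N$ essentially, while the paper's machinery only needs strong neutrality (which is what lets the authors later replace ``compact'' by ``countably compact'' or ``sequentially compact'' in Theorems \ref{th}--\ref{th3}; your orbit-map argument would not survive that weakening). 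For metrizability of $N\backslash G/H$, the paper (Proposition \ref{pro3} together with Theorem \ref{T1}) extracts a countable base for the right uniformity $\mathcal{U}_Z^r$ on the double coset space and applies the uniform metrization theorem, whereas you build a right-invariant metric $\bar d$ on $N\backslash G$ and push it down to the quotient pseudometric $D(NgH,Ng'H)=\inf_{h\in H}\bar d(Ngh,Ng')$, verifying directly that $D$ reproduces the quotient topology and is a metric via $T_1$-ness of $M$; your verifications of both points are sound (for the topology comparison, note that invariance of $\bar d$ alone suffices, since $\rho'^{-1}\bigl(B_D(\rho'x,r)\bigr)=\bigcup_{h\in H}B_{\bar d}(xh,r)$ is open). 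One step you should make explicit: the parenthetical ``(by metrizability) has a countable base'' is false for uniformities in general --- the paper itself stresses, citing \cite[Example 8.1.7]{En} in its discussion of \cite[Theorem 4.4]{LHL}, that a separated uniformity inducing a metrizable topology need not have a countable base. For your \emph{invariant} uniformity the conclusion does hold, but by homogeneity rather than by metrizability: if $\{q(V_n):n\in\omega\}$ is a countable base at the single point $q(e)$ of $N\backslash G$, then $NV_n\subseteq NV$ implies $E_{V_n}\subseteq E_V$ by right invariance of the entourages, so $\{E_{V_n}:n\in\omega\}$ is already a countable base of the whole uniformity (this is exactly the computation in the proof of Proposition \ref{pro3}), and the invariance-preserving metrization lemma then yields $\bar d$. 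With that observation inserted, your argument is complete and is a legitimate alternative to the paper's uniformity-theoretic treatment.
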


Recall that a continuous mapping $f:X\rightarrow Y$ is ({\it quasi}-) {\it perfect} if $f$ is closed and $f^{-1}(y)$ is (countably compact) compact in $X$ for each $y\in Y$. According to E. Michael \cite{Mii}, a point $x\in X$ is called a {\it $q$-point} of a space $X$ if there exists a sequence $\{U_n:n\in\omega\}$ of open neighborhoods of $x$ in $X$ such that every sequence $\{x_n\}_{n\in\omega}$ of points in $X$ such that $x_n\in U_n$ for each $n\in\omega$ has a point of accumulation in $X$. We also call $\{U_n:n\in\omega\}$ a {\it $q$-sequence} at $x$. A space $X$ is said to be a {\it$q$-space} if every point of it is a $q$-point. Recently, Peng and Liu obtained the following result:

\begin{theorem}\cite[Theorem 6]{PY}\label{PY}
 A topological group $G$ is a $q$-space if and only if $G$ is an open quasi-perfect preimage of a metrizable space.
\end{theorem}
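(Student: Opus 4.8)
The plan is to prove both implications, dispatching the converse (which is pure general topology) quickly and concentrating on the forward direction, where the group structure is essential. For the converse, suppose $f\colon G\to M$ is an open quasi-perfect map onto a metrizable space $M$. Fix $x\in G$, put $y=f(x)$, and let $B_n=B(y,1/n)$ be a decreasing base at $y$; then $U_n=f^{-1}(B_n)$ is a sequence of open neighbourhoods of $x$. Given any sequence $x_n\in U_n$ we have $f(x_n)\to y$, and I claim it clusters. If not, then $S=\{x_n:n\in\omega\}$ is infinite (otherwise a repeated value clusters) and closed discrete, so $S\cap f^{-1}(y)$ is a closed discrete subset of the countably compact fibre $f^{-1}(y)$, hence finite; discarding finitely many terms we may assume $f(x_n)\neq y$ for all $n$. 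Since $f$ is closed, $f(S)$ is closed, yet $y\in\overline{f(S)}\setminus f(S)$, a contradiction. Hence every point of $G$ is a $q$-point. (This half uses neither openness nor the algebra; it is the general fact that a quasi-perfect preimage of a $q$-space is a $q$-space.)

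For the forward direction, fix a $q$-sequence $\{U_n:n\in\omega\}$ at the identity $e$. Using continuity of multiplication and inversion and the regularity of $G$, I would inductively choose symmetric open neighbourhoods $V_n$ of $e$ with $V_{n+1}^2\subseteq V_n$, $\overline{V_{n+1}}\subseteq V_n$ and $V_n\subseteq U_n$, so that $\{V_n\}$ is again a $q$-sequence. Set $N=\bigcap_n V_n=\bigcap_n\overline{V_n}$. The squaring condition makes $N$ a subgroup, and it is closed as an intersection of closed sets. Moreover $N$ is countably compact: any sequence $\{y_k\}$ in $N$ satisfies $y_k\in N\subseteq U_k$, so by the $q$-property it has a cluster point, which lies in $\overline N=N$. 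This mirrors the feathered case (Theorem~\ref{the1.1} and Pasynkov's theorem) with ``compact'' weakened to ``countably compact''.

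Next I would show the coset space $G/N$ is metrizable, where $\pi\colon G\to G/N$ is the canonical (always open) map. It suffices to prove that $\{\pi(V_n)\}$ is a neighbourhood base at $\pi(e)$, i.e. that for every open $U\ni e$ some $V_n\subseteq UN$. If this failed, choosing $x_n\in V_n\setminus UN\subseteq U_n$ would yield, via the $q$-property, a cluster point $p\in\bigcap_n\overline{V_n}=N\subseteq UN$; but $UN$ is open and meets $\{x_n\}$ only finitely often, a contradiction. Since a first-countable coset space of a topological group is metrizable, $G/N$ is metrizable, and the fibres of $\pi$ are the cosets $gN$, each homeomorphic to the countably compact $N$.

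The main obstacle is the closedness of $\pi$, equivalently the shrinking property that every open $W\supseteq N$ contains $VN$ for some open $V\ni e$. The tempting route---take $x_n\in V_{n+1}N\setminus W$, write $x_n=v_nt_n$ with $v_n\in V_{n+1}$ and $t_n\in N$, and cluster the sequences $\{v_n\}$ and $\{t_n\}$ separately---fails, because cluster points of two sequences need not be attained along a common set of indices. The device that rescues the argument is to absorb both factors into a single level of the $q$-sequence: since $N\subseteq V_{n+1}$, we get $x_n=v_nt_n\in V_{n+1}^2\subseteq V_n\subseteq U_n$, so the single sequence $\{x_n\}$ has, by the $q$-property, a cluster point $p\in\bigcap_n\overline{V_n}=N\subseteq W$; as $W$ is open this forces infinitely many $x_n\in W$, contradicting the choice of the $x_n$. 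Thus $\pi$ is closed. Being open, closed, and with countably compact fibres, $\pi$ is an open quasi-perfect map of $G$ onto the metrizable space $G/N$, which completes the proof.
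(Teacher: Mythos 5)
Your construction is essentially the paper's own route, specialized to $H=\{e\}$ in Theorem~\ref{th}: you build symmetric open $V_n$ with $V_{n+1}^2\subseteq V_n\subseteq U_n$, set $N=\bigcap_n V_n$, get a closed countably compact subgroup, and show the canonical open map $\pi\colon G\to G/N$ is closed with countably compact fibres. Your ``absorption'' trick ($V_{n+1}N\subseteq V_{n+1}^2\subseteq V_n\subseteq U_n$, then cluster the single sequence $\{x_n\}$ into $N$) is exactly the content the paper delegates to Lemma~\ref{Lema2}, and it is correct; your converse half is a correct proof of the standard fact that a quasi-perfect preimage of a first-countable space is a $q$-space, which the paper simply quotes as ``every $M$-space is a $q$-space.''

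There is, however, one genuine gap: the metrizability of $G/N$. You correctly establish that $G/N$ is first-countable, but then invoke ``a first-countable coset space of a topological group is metrizable.'' That is not an available theorem: metrizability of a first-countable coset space is only known under additional hypotheses on the subgroup, namely neutrality (\cite[Theorem 3.1]{FST}, quoted after Theorem~\ref{TT}), and Section~5 of the paper is explicitly devoted to this pitfall --- it identifies exactly this unsupported step as the gap in \cite[Theorem 4.4]{LHL}, since first countability gives a separated uniformity inducing the topology but not obviously one with a countable base, and a separated uniformity without a countable base can still induce a metrizable topology (\cite[Example 8.1.7]{En}). Fortunately your own argument already contains the repair: your shrinking property (every open $W\supseteq N$ contains some $V_{n+1}N$) says precisely that $N$ is strongly neutral and that $\{V_n\}$ is a countable base at the set $N$ with $V_{n+1}^2\subseteq V_n$, so metrizability of $N\backslash G\cong G/N$ follows from the paper's Lemma~\ref{Lema1}, or equivalently by checking that $\{E_{V_n}^r\}$ is a countable base of the separated uniformity of Theorem~\ref{th1} and applying Theorem~\ref{T1} as in Proposition~\ref{pro3}. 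With that citation corrected --- and with the metrizability step moved after the shrinking argument rather than before it --- your proof closes and coincides with the paper's.
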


In the same paper, they also introduced the class of strong $q$-spaces and gave a characterization of strong $q$-spaces in topological groups in terms of preimages of metrizable spaces under mappings satisfying some conditions (see \cite[Theorem 24]{PY}). A point $x\in X$ is called a {\it strong $q$-point} of a space $X$ if there exists a sequence $\{U_n:n\in\omega\}$ of open neighborhoods
of $x$ in $X$ such that every sequence $\{x_n\}_{n\in\omega}$ of points in $X$ such that $x_n\in U_n$ for each $n\in\omega$ has a convergent subsequence.
We also call $\{U_n:n\in\omega\}$ a strong {\it $q$-sequence} at $x$. A space $X$ is said to be a {\it strong $q$-space} if every point in $X$ is a strong $q$-point \cite{PY}.

In view of the definition of strong $q$-spaces, Lin, Xie and Chen \cite{LX} introduced the concept of strict $q$-spaces as following: A point $x\in X$ is called a {\it strict $q$-point} of a space $X$ if there exists a $q$-sequence $\{U_n:n\in\omega\}$ at $x$ such that $\bigcap_{n\in\omega} U_n$ is a sequentially compact set. We also call $\{U_n:n\in\omega\}$ a {\it strict $q$-sequence} at $x$. A space $X$ is said to be a {\it strict $q$-space} if every point in $X$ is a strict $q$-point.  Lin, Xie and Chen established the following results:

\begin{theorem}\cite[Theorem 2.6]{LX}
A topological group $G$ is a strong $q$-space if and only if $G$ is an open and strongly sequential-perfect preimage of a metrizable space.
\end{theorem}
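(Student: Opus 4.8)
The plan is to prove both implications, exploiting the homogeneity of a topological group to reduce every local question to the identity $e$. I would dispose of sufficiency first, since it is the softer direction. Suppose $f\colon G\to M$ is an open continuous surjection onto a metrizable space $M$ that is strongly sequential-perfect, so that each point of $M$ carries a sequence of neighbourhoods whose $f$-preimage tube lifts arbitrary sequences to convergent subsequences. Fix $x\in G$, put $m=f(x)$, and let $\{W_n:n\in\omega\}$ be the distinguished neighbourhood sequence at $m$. By continuity of $f$ I choose open sets $U_n\ni x$ with $f(U_n)\subseteq W_n$. Then any $\{x_n\}_{n\in\omega}$ with $x_n\in U_n$ satisfies $x_n\in f^{-1}(W_n)$, hence has a convergent subsequence; so $\{U_n:n\in\omega\}$ is a strong $q$-sequence at $x$, and since $x$ is arbitrary $G$ is a strong $q$-space. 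Note that convergence of a subsequence automatically supplies an accumulation point, so the $q$-sequence clause needs no separate check.

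For necessity I start from a strong $q$-sequence $\{U_n:n\in\omega\}$ at $e$ and pass to a normal sequence: open symmetric sets $V_n=V_n^{-1}$ with $V_0=G$, $V_{n+1}^3\subseteq V_n$ and $V_n\subseteq U_n$. The Birkhoff--Kakutani construction then yields a continuous left-invariant pseudometric $d$ on $G$ whose balls at $e$ are sandwiched between consecutive $V_n$. Setting $N=\{x\in G:d(e,x)=0\}$, left-invariance makes $N$ a closed subgroup contained in $\bigcap_{n}V_n$, and $d$ descends to a genuine metric $\hat d$ on the left coset space $G/N$. The canonical map $\pi\colon G\to G/N$ is continuous and, being a coset map, open; moreover $(G/N,\hat d)$ is metrizable and its metric topology agrees with the quotient topology because the $d$-balls correspond to the base $\{V_n\}$. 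It remains to show that $\pi$ is strongly sequential-perfect.

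The heart of the argument is this verification. Given a coset $gN$, I take the open neighbourhoods $W_n=\pi(gV_n)$ of $gN$. A sequence with $x_n\in\pi^{-1}(W_n)=gV_nN$ satisfies $g^{-1}x_n\in V_nN$; since $N\subseteq V_{n+1}$ and $V_n^3\subseteq V_{n-1}$, the saturation gives $V_nN\subseteq V_{n-1}\subseteq U_{n-1}$, so $g^{-1}x_n\in U_{n-1}$. After a harmless index shift the strong $q$-sequence property produces a convergent subsequence of $\{g^{-1}x_n\}$, whence $\{x_n\}$ converges along a subsequence by continuity of left translation. This is exactly the lifting condition at $gN$, so $\pi$ is strongly sequential-perfect; in particular each fibre $gN\cong N$ is sequentially compact. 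I expect the main obstacle to be precisely this matching step: reconciling the strong $q$-sequence at $e$ with the definitional requirements of ``strongly sequential-perfect'' once the map has been built, namely reconciling the neighbourhood-tube formulation with whatever fibrewise or closedness condition the definition of \cite{LX} actually demands, and legitimising the passage through the saturation $\pi^{-1}(W_n)$ via the normality inequalities $V_nN\subseteq V_{n-1}$. The confirmation that $\hat d$ induces the quotient topology on $G/N$ is a secondary, routine consequence of the Birkhoff--Kakutani sandwich.
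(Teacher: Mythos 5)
There is a genuine gap, and it sits exactly where you flagged it: you never engage the actual definition of \emph{strongly sequential-perfect}, which demands (a) that the map be \emph{closed} and (b) that $f^{-1}(F)$ be sequentially compact for \emph{every} sequentially compact set $F\subseteq Y$ --- not a pointwise ``neighbourhood-tube'' lifting condition, which is an invention of your write-up. In the necessity direction, your verification at a single coset $gN$ (every selection $x_n\in gV_nN$ admits a convergent subsequence) at best gives sequential compactness of the fibres, i.e.\ the weaker ``sequential-perfect'' notion that characterizes \emph{strict} $q$-spaces, and even that presupposes closedness. Closedness of $\pi$, equivalently your claim that $\hat d$ induces the quotient topology on $G/N$, is \emph{not} a routine consequence of the Birkhoff--Kakutani sandwich: $\{V_n:n\in\omega\}$ is a particular shrinking sequence, not a base at $e$, so the metric topology is a priori strictly coarser than the quotient topology (and onto the metric topology $\pi$ need not even be open). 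The equality requires showing that $\{V_n:n\in\omega\}$ is a neighbourhood base at the subgroup $N=\bigcap_{n\in\omega}V_n$, and this is precisely where the accumulation-point property of the $q$-sequence enters --- in the paper via Lemma \ref{Lema2} (the $V_n$ form a base at the countably/sequentially compact set $N$) combined with Lemma \ref{Lema1} (the quotient map is closed and open and the quotient is metrizable). Moreover, for an arbitrary sequentially compact $F$ one must still take a sequence in $\pi^{-1}(F)$ whose images converge to some $z_0$, thread a subsequence through a \emph{strongly} countable neighbourhood base of the fibre $\pi^{-1}(z_0)$ (obtained from that of $N$ via the open map, Lemma \ref{LL}), extract a convergent subsequence, and then prove the limit lies back in $\pi^{-1}(z_0)\subseteq\pi^{-1}(F)$ using the base property and Hausdorffness; the longest paragraph of the paper's Theorem \ref{th3} does exactly this, whereas your sketch replaces it with the assertion that the tube condition ``is exactly the lifting condition.''

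The sufficiency direction has the same defect: a strongly sequential-perfect map carries no ``distinguished neighbourhood sequence'' $\{W_n\}$ with a preimage-tube property, so the step ``$x_n\in f^{-1}(W_n)$, hence $\{x_n\}$ has a convergent subsequence'' is unsupported. The correct (and short) repair is the trick in the paper's Proposition \ref{LLL}: take $x_n\in f^{-1}(U_n)$ where $\{U_n\}$ is a strong $q$-sequence (e.g.\ a decreasing base, since metrizable implies first countable implies strong $q$) at $f(x)$; extract a convergent subsequence $f(x_{n_i})\to y_0$; set $F=\{f(x_{n_i}):i\in\omega\}\cup\{y_0\}$, which is sequentially compact; then $f^{-1}(F)$ is sequentially compact by the definition of strongly sequential-perfect and contains $\{x_{n_i}\}$, yielding the convergent subsequence. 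In fairness, your necessity skeleton (shrink the strong $q$-sequence to a normal sequence, set $N=\bigcap_n V_n$, pass to the coset space and metrize) is essentially the $H=\{e\}$ case of the paper's construction of $K$ and the coset space; but the two definitional requirements you defer --- closedness and preimages of sequentially compact sets --- are not loose ends to be ``reconciled,'' they are the entire mathematical content of the theorem, and naming them as obstacles does not supply the arguments.
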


A space $X$ is called {\it sequentially compact} if every sequence of $X$ has a convergent subsequence \cite{En}.
 A continuous mapping $f:X\rightarrow Y$ is called {\it strongly sequential-perfect} \cite{LX} if $f$ is closed and $f^{-1}(F)$ is a sequentially compact set for each sequentially compact set $F\subseteq Y$. A continuous mapping $f:X\rightarrow Y$ is called {\it sequential-perfect} \cite{LX} if $f$ is closed and $f^{-1}(y)$ is a sequentially compact set for each $y\in Y.$

\begin{theorem}\cite[Theorem 2.4]{LX}
A topological group $G$ is a strict $q$-space if and only if $G$ is an open sequential-perfect preimage of a metrizable space.
\end{theorem}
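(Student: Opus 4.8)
The plan is to prove the two implications separately, with the group structure entering only in the ``necessity'' direction. For the sufficiency, suppose $f\colon G\to M$ is an open sequential-perfect map onto a metrizable space $M$; I would show $G$ is a strict $q$-space without using the group structure at all. Fix $x\in G$, put $y=f(x)$, choose open balls $V_n=B(y,1/n)$ in $M$ and set $U_n=f^{-1}(V_n)$. These are open neighborhoods of $x$ with $\bigcap_{n\in\omega}U_n=f^{-1}(y)$, which is sequentially compact because $f$ is sequential-perfect; this is exactly the ``strict'' requirement. It remains to check that $\{U_n:n\in\omega\}$ is a $q$-sequence. Given $x_n\in U_n$ we have $f(x_n)\in V_n$, so $f(x_n)\to y$. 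If $\{x_n\}$ had no accumulation point it would be closed and discrete (we may assume the $x_n$ are distinct, since a repeated value is an accumulation point); its intersection with the countably compact set $f^{-1}(y)$ would then be finite, so after discarding finitely many terms I may assume $f(x_n)\ne y$ for all $n$. But then $f(\{x_n\})$ is closed (as $f$ is closed) and contains a sequence converging to $y$, forcing $y=f(x_m)$ for some $m$, a contradiction. Hence $\{U_n\}$ is a strict $q$-sequence and $x$ is a strict $q$-point.

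For the necessity, suppose $G$ is a strict $q$-space. By homogeneity it suffices to work at the identity $e$: fix a strict $q$-sequence $\{U_n\}$ at $e$, so $\bigcap_n U_n$ is sequentially compact. Using continuity of the group operations I would build, inductively, symmetric open neighborhoods $W_n$ of $e$ with $W_{n+1}^3\subseteq W_n\cap U_{n+1}$. Then $W_n\subseteq U_n$, the $W_n$ are nested with $\overline{W_{n+1}}\subseteq W_n$, and the standard computation shows $N:=\bigcap_n W_n=\bigcap_n\overline{W_n}$ is a closed subgroup. Since $W_n\subseteq U_n$, the sequence $\{W_n\}$ is again a $q$-sequence, and $N\subseteq\bigcap_n U_n$ together with closedness of $N$ gives that $N$ is sequentially compact. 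Let $\pi\colon G\to G/N$ be the quotient map onto the coset space; it is continuous, open, and its fibers are the cosets $xN$, each homeomorphic to $N$ and hence sequentially compact.

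Two things remain: that $G/N$ is metrizable and that $\pi$ is closed. For metrizability I would verify that $\{W_n\}$ is a countable outer base for $N$: each $W_m\supseteq N$, and if some open $O\supseteq N$ contained no $W_m$ I could pick $w_m\in W_m\setminus O$, whose accumulation point (guaranteed by the $q$-sequence) would lie in $\bigcap_n\overline{W_n}=N\subseteq O$, contradicting $O$ being open. Thus $N$ has countable character in $G$, so $\pi(e)$ has countable character in $G/N$; by homogeneity $G/N$ is first countable, and a first-countable coset space is metrizable by the classical metrization theorem for coset spaces. Since $G/N$ is metrizable, $\pi$ is closed provided every sequence $\{g_n\}$ in $G$ whose images $\pi(g_n)$ converge has an accumulation point in $G$: for then any sequence lying in a closed set with convergent image clusters inside that set, whose $f$-image must be the limit. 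Translating, I may assume (after passing to a subsequence) $g_n\in gW_nN$, say $g_n=g\,w_n t_n$ with $w_n\in W_n$ and $t_n\in N$.

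The hard part will be precisely combining the two compactness features at this last step, and it is here that both defining properties of a strict $q$-sequence are needed. Sequential compactness of $N$ yields a subsequence with $t_{n_k}\to t\in N$; the $q$-sequence property yields an accumulation point $w\in N$ of $\{w_{n_k}\}$, i.e.\ a subnet $w_{n_{k(\alpha)}}\to w$. Along this subnet $t_{n_{k(\alpha)}}\to t$ as well, so continuity of multiplication gives $g_{n_{k(\alpha)}}=g\,w_{n_{k(\alpha)}}t_{n_{k(\alpha)}}\to g\,w\,t$, exhibiting $gwt$ as an accumulation point of $\{g_n\}$. This shows $\pi$ is closed, hence sequential-perfect, and completes the construction of the required open sequential-perfect map onto the metrizable space $G/N$. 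The delicate point to get right is that a mere accumulation point of $\{w_n\}$ (from countable compactness) cannot simply be multiplied with an accumulation point of $\{t_n\}$; one genuinely uses the convergent subsequence provided by sequential compactness of $N$ and then refines to a subnet, and this is exactly what separates the strict $q$-space case from the plain $q$-space case of Theorem~\ref{PY}.
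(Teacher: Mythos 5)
Your overall architecture is essentially the paper's, specialized to $H=\{e\}$: your necessity construction builds the same subgroup $N=\bigcap_n W_n$ with $W_{n+1}^2\subseteq W_n$ and $W_n\subseteq U_n$, verifies as in Lemma \ref{Lema2} that $\{W_n\}$ is a countable outer base at $N$ and that $N$ is closed and sequentially compact, and your sufficiency argument is Proposition \ref{pp} with the metric used directly (your closed-image trick --- discard the finitely many $x_n$ in the fiber, then $f(\{x_n\})$ closed forces $y=f(x_m)$ --- is correct, including the reduction to distinct terms). One genuine difference: you prove closedness of $\pi$ by hand, via the sequential criterion in the metrizable range, splitting $g_n=g\,w_n t_n$ and combining sequential compactness of $N$ (convergent subsequence of $\{t_n\}$) with the $q$-property (cluster point of $\{w_{n_k}\}$, valid since $w_{n_k}\in W_{n_k}\subseteq U_k$, refined to a subnet). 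That argument is sound, and it is a legitimate alternative to the paper's route, which gets closedness structurally from strong neutrality of $N$ via Lemma \ref{Lema1} and Proposition \ref{Pro1}.

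The one real flaw is the metrization step. ``A first-countable coset space is metrizable'' is \emph{not} a classical theorem, and in this generality it is exactly the claim whose published proof this paper criticizes (\cite[Theorem 4.4]{LHL}; see the discussion in Section 5): the known positive results, \cite[Theorem 3.1]{FST} and Theorem \ref{TT} here, all require the subgroup to be \emph{neutral}. So first countability of $G/N$ alone does not justify metrizability, and as written this step rests on an unproven general statement. Fortunately your construction delivers strictly more than first countability: $\{W_n\}$ is a countable outer base at $N$ with $W_{n+1}^2\subseteq W_n$, hence $N$ is strongly neutral (given open $O\supseteq N$ choose $W_n\subseteq O$; then $NW_{n+1}\subseteq W_{n+1}^2\subseteq W_n\subseteq O$), and Lemma \ref{Lema1} --- equivalently Theorem \ref{th1} together with Proposition \ref{pro3}, where $\{E^r_{W_n}:n\in\omega\}$ is a countable base for the separated uniformity --- gives metrizability of the quotient. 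Replace the appeal to the ``classical metrization theorem'' by this argument and your proof is complete; note that this repair also yields closedness of $\pi$ for free, making your subnet argument an optional but correct shortcut.
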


Hence, in view of Theorems \ref{the1.1}, it is natural to ask whether the quotient spaces of topological groups, which are $q$-spaces, strong $q$-spaces or strict $q$-spaces are certain preimages of metrizable spaces. These are the main aims of our paper.

\section{Preliminaries}\label{Sec:1}

We begin by introducing some notations. Let $X$ be a set and let $A$ and $B$ be subsets of $X\times X$, i.e. relations on the set $X$. The inverse relation of $A$ will be denoted by $-A$, i.e., $-A=\{(x,y):(y,x)\in A\}$. The composition of $A$ and $B$ will be denoted by $A+B$, i.e., $A+B=\{(x,z): \text{ there is a } y\in X \text{ such that } (x,y)\in A \text{ and } (y,z)\in B\}.$ $2A$ means that $A+A$.

The diagonal of the Cartesian product $X\times X$ is the set $\Delta_X=\{(x,x):x\in X\}$. Every set $V\subseteq X\times X$ that contains $\Delta_X$ and satisfies the condition $V=-V$ is called an {\it entourage of diagonal}. The family of all entourages of diagonal will be denoted by $\mathcal {D}_X$. Let $x_0$ be a point in $X$ and let $V\in \mathcal {D}_X$. The set $V[x_0]=\{x\in X:(x_0,x)\in V\}$ is called the {\it ball with center $x_0$} and {\it radius} $V$.

A {\it uniformity} on a set $X$ is a subfamily $\mathcal {U}\subseteq \mathcal {D}_X$ which satisfies the following conditions:
\begin{enumerate}
\item[$(U1)$] If $V\in \mathcal {U}$ and $V\subseteq W\in \mathcal {D}_X$, then $W\in \mathcal {U}$.
\item[$(U2)$] If $V_1,V_2\in \mathcal {U}$, then $V_1\cap V_2 \in \mathcal {U}$.
\item[$(U3)$] For every $V\in \mathcal {U} $, there exists a $W\in \mathcal {U}$ such that $2W\subseteq V$.
\end{enumerate}

A uniformity $\mathcal {U}$ on a set $X$ is called a {\it separated uniformity} if $\mathcal {U}$ satisfies the condition: $(U4)$ $\bigcap \mathcal {U}=\Delta_X.$

A subfamily $\mathcal {B}\subseteq \mathcal {U}$ is called a base for the uniformity $\mathcal {U}$ if for every $V\in \mathcal {U} $ there exists a $W\in\mathcal {B}$ such that $W\subseteq V$.

The following result is well known.
\begin{proposition}\label{p1}
Let $X$ be a set. For a subfamily $\mathcal {B}\subseteq \mathcal {D}_X$, if $\mathcal {B}$ satisfies the following conditions

\begin{enumerate}
\item[$(BU1)$] if $V_1,V_2\in \mathcal {B}$, there is $V\in\mathcal {B} $ such that $V\subseteq V_1\cap V_2 $;
\item[$(BU2)$] for every $V\in \mathcal {B} $, there exists a $W\in \mathcal {B}$ such that $2W\subseteq V$;
\end{enumerate}
then $\mathcal {B}$ is a base for a uniformity $\mathcal {U}$ on the set $X$.
 If the family $\mathcal {B}$ satisfies the additional condition $(BU3)$ $\bigcap \mathcal {B}=\Delta_X$, then $\mathcal {B}$ is a base for a separated uniformity $\mathcal {U}$ on the set $X$.
\end{proposition}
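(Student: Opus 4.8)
The plan is to produce the uniformity $\mathcal{U}$ by hand as the upward closure of $\mathcal{B}$ inside $\mathcal{D}_X$, namely
\[
\mathcal{U} = \{W \in \mathcal{D}_X : V \subseteq W \text{ for some } V \in \mathcal{B}\},
\]
and then to verify the three axioms $(U1)$--$(U3)$ directly, reading off the base property from the very definition of $\mathcal{U}$. For the final assertion I would simply append the check of $(U4)$. Before starting the verification I would record two elementary closure facts about $\mathcal{D}_X$ on which everything rests: the intersection $V_1\cap V_2$ of two entourages of the diagonal is again an entourage (it contains $\Delta_X$, and $-(V_1\cap V_2)=(-V_1)\cap(-V_2)=V_1\cap V_2$), and every member of $\mathcal{B}$ already lies in $\mathcal{D}_X$, so that $\mathcal{B}\subseteq\mathcal{U}$.

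Axiom $(U1)$ is then immediate: if $V\in\mathcal{U}$ and $V\subseteq W\in\mathcal{D}_X$, I pick $B\in\mathcal{B}$ with $B\subseteq V$, whence $B\subseteq W$ and $W\in\mathcal{U}$. For $(U2)$, given $V_1,V_2\in\mathcal{U}$ I choose $B_1,B_2\in\mathcal{B}$ lying below them; condition $(BU1)$ supplies $B\in\mathcal{B}$ with $B\subseteq B_1\cap B_2\subseteq V_1\cap V_2$, and since $V_1\cap V_2\in\mathcal{D}_X$ by the closure fact above, this places $V_1\cap V_2$ in $\mathcal{U}$. For $(U3)$, given $V\in\mathcal{U}$ I take $B\in\mathcal{B}$ with $B\subseteq V$; condition $(BU2)$ then yields $W\in\mathcal{B}$ with $2W\subseteq B\subseteq V$, and $W\in\mathcal{B}\subseteq\mathcal{U}$, so $W$ is the required refinement. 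That $\mathcal{B}$ is a base for $\mathcal{U}$ is precisely the defining property of $\mathcal{U}$: every $V\in\mathcal{U}$ contains some $W\in\mathcal{B}$.

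For the separated case I would invoke $(BU3)$ together with $\mathcal{B}\subseteq\mathcal{U}$: since each member of $\mathcal{U}$ contains $\Delta_X$ while the intersection over the larger family $\mathcal{U}$ is contained in the intersection over $\mathcal{B}$, we get $\Delta_X\subseteq\bigcap\mathcal{U}\subseteq\bigcap\mathcal{B}=\Delta_X$, which is $(U4)$.

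The argument is entirely routine; the only point demanding any care is staying inside $\mathcal{D}_X$ at each step---that is, confirming that the sets produced by intersection and by the $2W\subseteq V$ refinement are genuine entourages (symmetric and containing the diagonal) and hence eligible to belong to $\mathcal{U}$. I do not expect a real obstacle here: the content of the proposition is essentially that $(BU1)$--$(BU2)$ are exactly the combinatorial shadow of $(U1)$--$(U3)$ once one passes to upward closures, and $(BU3)$ the shadow of $(U4)$.
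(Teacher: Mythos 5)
Your proof is correct: the upward closure $\mathcal{U}=\{W\in\mathcal{D}_X: V\subseteq W \text{ for some } V\in\mathcal{B}\}$ verifies $(U1)$--$(U3)$ exactly as you argue, the base property holds by construction, and the sandwich $\Delta_X\subseteq\bigcap\mathcal{U}\subseteq\bigcap\mathcal{B}=\Delta_X$ settles the separated case. The paper states this proposition without proof, labelling it well known, and your argument is precisely the standard one it implicitly relies on (cf.\ Engelking, \emph{General Topology}, \S 8.1), so there is nothing further to compare.
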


The following result is well known.
\begin{theorem}\cite[Theorem 8.1.1]{En}\label{t1}
For every uniformity $\mathcal {U}$ on a set $X$ the family $\mathcal {O}=\{U\subseteq X:\text{ for every }x\in U \text{ there is a }V\in\mathcal {U} \text{ such that }V[x]\subseteq U\}$ is a topology on the set.
\end{theorem}

The topology $\mathcal {O}$ is called the {\it topology induced by the uniformity $\mathcal {U}$}. A topological space $X$ with a topology $\mathcal {T}$ is {\it (pseudo)metrizable} if the topology  $\mathcal {T}$ is induced by a (pseudo)metric $d$ on the set $X$.

The following result is well known.
\begin{theorem}\label{T1}
A topological space $X$ with a topology $\mathcal {T}$ is (pseudometrizable) metrizable if and only if the topology $\mathcal {T}$ is induced by a (uniformity) separated uniformity with a countable base.
\end{theorem}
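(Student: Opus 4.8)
The plan is to prove the two implications separately and to treat the metrizable/separated version in detail, the pseudometrizable version being identical save that one drops condition $(U4)$ (equivalently $(BU3)$). The forward implication is routine: suppose $(X,\mathcal{T})$ is metrizable with $\mathcal{T}$ induced by a metric $\rho$, and for $n\in\N$ set $V_n=\{(x,y):\rho(x,y)<2^{-n}\}$. Each $V_n$ lies in $\mathcal{D}_X$, since it contains $\Delta_X$ and, by symmetry of $\rho$, satisfies $V_n=-V_n$; and $\mathcal{B}=\{V_n:n\in\N\}$ is countable. I would check the hypotheses of Proposition~\ref{p1}: $(BU1)$ holds because $V_{\max\{m,n\}}\subseteq V_m\cap V_n$, and $(BU2)$ holds because the triangle inequality yields $2V_{n+1}\subseteq V_n$. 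As $\rho$ is a metric, $\bigcap_n V_n=\Delta_X$, giving $(BU3)$, so $\mathcal{B}$ is a base for a separated uniformity $\mathcal{U}$. Finally $V_n[x]$ is exactly the open $\rho$-ball of radius $2^{-n}$ about $x$, so the topology induced by $\mathcal{U}$ through Theorem~\ref{t1} coincides with $\mathcal{T}$.

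For the converse, let $\mathcal{T}$ be induced by a separated uniformity $\mathcal{U}$ with countable base $\{B_n:n\in\N\}$; the first step is to manufacture a well-behaved base. Applying $(U3)$ twice between consecutive stages and intersecting with $B_n$ via $(U1),(U2)$, I would build a decreasing sequence $V_0\supseteq V_1\supseteq V_2\supseteq\cdots$ in $\mathcal{U}$ with $V_0=X\times X$, with $V_{n+1}+V_{n+1}+V_{n+1}\subseteq V_n$ for all $n$, and with each $B_n$ containing some $V_k$. The last property makes $\{V_n:n\in\N\}$ a base for $\mathcal{U}$, whence $\bigcap_n V_n=\bigcap\mathcal{U}=\Delta_X$.

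The heart of the matter is the metrization lemma for this chain. Define $g:X\times X\to[0,1]$ by $g(x,y)=2^{-n}$ when $(x,y)\in V_n\setminus V_{n+1}$ and $g(x,y)=0$ when $(x,y)\in\bigcap_n V_n$. Since $g$ need not obey the triangle inequality, I would pass to
\[
d(x,y)=\inf\Big\{\textstyle\sum_{i=0}^{k-1}g(z_i,z_{i+1}):k\in\N,\ z_0=x,\ z_k=y\Big\},
\]
the infimum over finite chains from $x$ to $y$. Symmetry of each $V_n$ makes $d$ symmetric, and the chaining makes the triangle inequality automatic, so $d$ is a pseudometric. The decisive estimate is
\[
\tfrac12\,g(x,y)\le d(x,y)\le g(x,y),
\]
whose upper bound is the trivial one-term chain. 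Granting the full estimate, the ball $\{y:d(x,y)<2^{-n}\}$ is trapped between $V_{n+1}[x]$ and $V_{n-1}[x]$, so $d$ induces $\mathcal{T}$; and if $d(x,y)=0$ then the lower bound forces $g(x,y)=0$, i.e. $(x,y)\in\bigcap_n V_n=\Delta_X$, so $x=y$ and $d$ is a genuine metric whose balls form a countable base.

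I expect the main obstacle to be the inductive proof of the lower bound $\tfrac12 g(x,y)\le d(x,y)$, equivalently the chain inequality $g(z_0,z_k)\le 2\sum_{i}g(z_i,z_{i+1})$: one must rule out that a cleverly chosen intermediate chain beats the direct weight by more than a factor of two, and this is precisely where the triple refinement $V_{n+1}+V_{n+1}+V_{n+1}\subseteq V_n$ is indispensable. The remaining points — verifying the uniformity axioms in the forward direction, matching the two topologies, and deducing separatedness — are bookkeeping resting on this single estimate.
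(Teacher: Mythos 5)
Your proposal is correct, and it matches the paper's own route in the only sense available: the paper states Theorem~\ref{T1} without proof as a well-known fact (it is the uniform metrization theorem, \cite[Theorem 8.1.21]{En}), and what you outline is exactly the classical argument behind that reference. Both halves are sound as planned — the forward direction with $V_n=\{(x,y):\rho(x,y)<2^{-n}\}$ checked against Proposition~\ref{p1}, and the converse via the chaining lemma, where the triple refinement $V_{n+1}+V_{n+1}+V_{n+1}\subseteq V_n$ (obtainable from two applications of $(U3)$, since $3W\subseteq 2(2W)$ for entourages containing $\Delta_X$) drives the inductive estimate $\tfrac12\,g(x,y)\le d(x,y)\le g(x,y)$, and the one step you defer (induction on chain length, splitting the chain at the largest index whose partial weight is at most half the total and applying the triple composition) goes through exactly as you anticipate.
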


Let $H$ and $K$ be subgroups
of an abstract group $G$. Put $$K\backslash G/H = \{KxH : x\in G\}
$$ of subsets of $G$. It is clear that for any $x, y \in G$, either $KxH = KyH$ or $KxH \cap KyH = \emptyset$.
Therefore, $K\backslash G/H$ is a partition of the group $G$. Denote by $\pi$ the canonical mapping of
$G$ onto $K\backslash G/H$ defined by $\pi(x) = KxH$, for every $x \in G$. If $G$ is a topological group, then we say {\it double coset space} is the $K\backslash G/H$ with the quotient topology of $G$.

let $\mathscr{N}_s(e)$ be the family of all open symmetric neighborhoods of the identity
$e$ in a topological group $G$. Denote by $\pi$ the natural mapping of $G$ onto the quotient space $ Z=K\backslash G/H$. For every $V\in  \mathscr{N}_s(e)$, put
$$E_V^r=\{(\pi(x),\pi(y)): \pi(y)\in \pi(Vx)\},$$
$$E_V^l=\{(\pi(x),\pi(y)): \pi(y)\in \pi(xV)\},$$
$$\mathscr{B}_Z^r=\{E_V^r: V\in \mathscr{N}_s(e)\}$$
and
$$\mathscr{B}_Z^l=\{E_V^l: V\in \mathscr{N}_s(e)\}.$$

\section{The uniformities on the double coset spaces in topological groups}\label{Sec:2}
A subgroup $H$ of a topological group $G$ is called {\it neutral} if for every open
neighborhood $U$ of the identity $e$ in $G$, there exists an open neighborhood $V$
of $e$ such that $HV \subseteq UH$ (equivalently, $V H \subseteq HU$) \cite{Pon}.

A subgroup $H$ of a topological group $G$ is called {\it strongly neutral} if for arbitrary open neighborhood $O$ of $H$ in $G$ there is an open neighborhood $V$ of the identity such that $HV\subseteq O$ (equivalently, $V H \subseteq O$). It is obvious that every strongly neutral subgroup is neutral

\begin{proposition}\label{pro}
Let $K$ be a strongly neutral subgroup of topological group $G$. Then $KF$ and $FK$ are closed in $G$ for each closed set $F$ in $G$.
\end{proposition}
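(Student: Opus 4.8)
The plan is to prove the statement directly by showing that the complement $G\setminus KF$ is open; the assertion for $FK$ will then follow by a mirror-image argument on the other side. First I would recast membership in $KF$ in terms of cosets. Since $K$ is a subgroup we have $K=K^{-1}$, and therefore $x\in KF$ if and only if $k^{-1}x\in F$ for some $k\in K$, i.e. if and only if $Kx\cap F\neq\emptyset$. Consequently, fixing a point $x\in G\setminus KF$ is the same as fixing $x$ with $Kx\cap F=\emptyset$, that is, with $Kx\subseteq G\setminus F$.

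The key step is to manufacture from this an open neighborhood of the \emph{whole} subgroup $K$, so that the strong neutrality hypothesis becomes applicable. Because right translation by $x^{-1}$ is a homeomorphism and $F$ is closed, the set $O:=(G\setminus F)x^{-1}$ is open; moreover $Kx\subseteq G\setminus F$ gives $K=(Kx)x^{-1}\subseteq O$, so $O$ is an open neighborhood of $K$ in $G$. Strong neutrality of $K$ then supplies an open neighborhood $V$ of the identity $e$ with $KV\subseteq O$, which is to say $KVx\subseteq G\setminus F$, equivalently $KVx\cap F=\emptyset$.

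It remains to verify that $Vx$ is the desired neighborhood of $x$ missing $KF$. Indeed $Vx$ is open and contains $x=ex$, and if some point $vx\in Vx$ lay in $KF$, say $vx=kf$ with $k\in K$ and $f\in F$, then $k^{-1}vx=f\in F$ while $k^{-1}vx\in KVx$, contradicting $KVx\cap F=\emptyset$. Hence $Vx\cap KF=\emptyset$, so $x\notin\overline{KF}$; as $x\in G\setminus KF$ was arbitrary, $KF$ is closed. For $FK$ one repeats the argument on the left: here $x\in FK$ iff $xK\cap F\neq\emptyset$, so $x\notin FK$ yields $xK\subseteq G\setminus F$ and thus $K\subseteq x^{-1}(G\setminus F)=:O$; using the equivalent form $VK\subseteq O$ of strong neutrality one obtains an open neighborhood $V$ of $e$ with $xVK\cap F=\emptyset$, and the same contradiction argument shows that the open set $xV$ is disjoint from $FK$.

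The proof is in essence a bookkeeping exercise in coset translations, with no deep difficulty. The only genuine content, and the step I would be most careful to state correctly, is the upgrade from the inclusion $Kx\subseteq G\setminus F$ to an honest open neighborhood of the entire subgroup $K$: it is precisely this global neighborhood of $K$ that strong neutrality is designed to handle, whereas mere neutrality controls only translates of neighborhoods of the identity and would not suffice here.
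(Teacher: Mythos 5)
Your proof is correct and follows essentially the same route as the paper's: you take $x\notin KF$, observe that $(G\setminus F)x^{-1}=G\setminus Fx^{-1}$ is an open neighborhood of the whole subgroup $K$, apply strong neutrality to get $V$ with $KVx\cap F=\emptyset$, and conclude that the open set $Vx$ misses $KF$, with the mirror-image argument (via the equivalent form $VK\subseteq O$ noted in the definition) handling $FK$. You merely spell out the translation bookkeeping in more detail than the paper does.
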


\begin{proof}
Take an arbitrary point $x\notin KF$. Then $G\setminus Fx^{-1}$ is an open neighborhood of $K$, so there is an open neighborhood $V$ of the identity such that $KV\subseteq G\setminus Fx^{-1}$. This implies that $KV\cap Fx^{-1}=\emptyset$ which is equivalent to $Vx\cap KF=\emptyset$. This implies that $KF$ is closed, because $Vx$ is an open neighborhood of $x$. Similarly, one can show that $FK$ is closed.
\end{proof}

According to Proposition \ref{pro} one can easily obtain the following result.

\begin{corollary}\label{cor}
Let $K$ be a strongly neutral subgroup of topological group $G$. Then natural quotient mappings $\pi:G\rightarrow G/K$ and $q:G\rightarrow K\backslash G$ are open and closed.
\end{corollary}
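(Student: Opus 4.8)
The plan is to separate the two properties, since openness of a coset projection holds for the quotient by \emph{any} subgroup and needs no hypothesis, whereas closedness is exactly where the strong neutrality of $K$ enters, through Proposition \ref{pro}. Throughout I would use the convention $G/K=\{xK:x\in G\}$ with $\pi(x)=xK$, and $K\backslash G=\{Kx:x\in G\}$ with $q(x)=Kx$, together with the defining property of the quotient topology: a subset $S\subseteq G/K$ is open (respectively closed) iff its full preimage $\pi^{-1}(S)$ is open (respectively closed) in $G$.

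First I would dispatch openness. For an open set $U\subseteq G$ one has $\pi^{-1}(\pi(U))=UK=\bigcup_{k\in K}Uk$, a union of right translates of $U$ and hence open; so $\pi(U)$ is open and $\pi$ is an open map. The identical computation with $KU$ in place of $UK$ shows that $q$ is open. For closedness, let $F$ be closed in $G$. The key step is to identify the saturation: a point $y$ lies in $\pi^{-1}(\pi(F))$ iff $yK=xK$ for some $x\in F$, i.e.\ iff $y\in FK$, so $\pi^{-1}(\pi(F))=FK$. By Proposition \ref{pro}, $FK$ is closed in $G$, whence $\pi(F)$ is closed in $G/K$ and $\pi$ is closed. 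Symmetrically, $q^{-1}(q(F))=KF$, which is closed by the other half of Proposition \ref{pro}, so $q$ is closed as well.

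I do not anticipate any genuine obstacle here: the only nontrivial input is Proposition \ref{pro}, and everything else is the routine passage between ``closed in the quotient'' and ``saturation closed in $G$,'' plus the standard fact that coset projections are open. The one place where a little care is warranted is the correct bookkeeping of the two saturations, $\pi^{-1}(\pi(F))=FK$ and $q^{-1}(q(F))=KF$, so that each is matched with the appropriate conclusion of Proposition \ref{pro}; once these identifications are made, the corollary follows immediately.
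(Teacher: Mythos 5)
Your proposal is correct and follows exactly the route the paper intends: the paper omits the proof, noting only that the corollary follows from Proposition \ref{pro}, and your argument — openness via the saturations $\pi^{-1}(\pi(U))=UK$ and $q^{-1}(q(U))=KU$, closedness via $\pi^{-1}(\pi(F))=FK$ and $q^{-1}(q(F))=KF$ together with Proposition \ref{pro} — is precisely the routine verification being left to the reader, with the two saturations correctly matched to the two conclusions of that proposition.
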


\begin{proposition}\label{pro1}
Let $H$ and $K$ be subgroups of topological group $G$. Then the natural mapping $\pi:G\rightarrow K\backslash G/H$ is open. If $K$ is strongly neutral and $H$ is closed, then the double coset spaces $K\backslash G/H$ and $H\backslash G/K$ are regular.
\end{proposition}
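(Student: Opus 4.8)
The plan is to treat the two assertions separately, proving openness of $\pi$ first (this needs nothing beyond $H,K$ being subgroups) and then regularity. For openness, I would argue directly from the definition of the quotient topology on $K\backslash G/H$: it suffices to check that $\pi^{-1}(\pi(U))$ is open in $G$ for every open $U\subseteq G$. But $\pi^{-1}(\pi(U))=KUH=\bigcup_{k\in K,\,h\in H}kUh$, and each $kUh$ is open because left and right translations are homeomorphisms of $G$; hence the union is open and $\pi(U)$ is open. This step is routine and uses no hypotheses on $H$ or $K$.

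For regularity of $Z:=K\backslash G/H$ I would first record two consequences of the standing hypotheses. Since $\pi$ is open, for each $x_0$ the family $\{\pi(Vx_0):V\in\mathscr{N}_s(e)\}$ is a base of open neighborhoods of $\pi(x_0)$: each member is open and contains $\pi(x_0)$, and if $O\ni\pi(x_0)$ is open then $\pi^{-1}(O)$ is an open neighborhood of $x_0$, so $Vx_0\subseteq\pi^{-1}(O)$ for some $V$, giving $\pi(Vx_0)\subseteq O$. Secondly, $Z$ is $T_1$: for each $x$ the coset $xH$ is closed (as $H$ is closed), and since $K$ is strongly neutral, Proposition \ref{pro} shows $KxH=K(xH)$ is closed in $G$; thus every singleton of $Z$ has closed preimage and is closed.

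The heart of the argument is to verify the separation axiom in the form: for $\pi(x_0)$ and open $O\ni\pi(x_0)$ there is $V\in\mathscr{N}_s(e)$ with $\overline{\pi(Vx_0)}\subseteq O$. Writing $U=\pi^{-1}(O)$, we have $x_0\in U$ open with $KUH=U$. The key computation converts a closure in the quotient into a closure in $G$: using the neighborhood base above, $\pi(y)\in\overline{\pi(Vx_0)}$ holds iff $\pi(Wy)\cap\pi(Vx_0)\neq\emptyset$ for every $W\in\mathscr{N}_s(e)$, and since $\pi(Wy)$ meets $\pi(Vx_0)$ exactly when some $wy$ ($w\in W$) lies in $KVx_0H$, this unwinds to $y\in WKVx_0H$ for all $W$, i.e. to $y\in\bigcap_W WKVx_0H=\overline{KVx_0H}$. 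Hence $\overline{\pi(Vx_0)}=\pi(\overline{KVx_0H})$, and it remains to choose $V$ with $\overline{KVx_0H}\subseteq U$. To do so I would pick symmetric open $O_1\ni e$ with $O_1O_1x_0\subseteq U$ (continuity of multiplication), then use that $K$ is neutral (strong neutrality implies neutrality) to obtain symmetric open $V\subseteq O_1$ with $VK\subseteq KO_1$; invoking $\overline{A}\subseteq WA$ for any neighborhood $W$ of $e$ gives $\overline{KVx_0H}\subseteq V(KVx_0H)\subseteq KO_1O_1x_0H\subseteq KUH=U$. I expect this to be the main obstacle: because $\pi$ is merely open (not closed), closures in $Z$ do not track closures in $G$ automatically, and one must interlock the neutrality of $K$ with the continuity estimate in precisely the right order so that the stray translation factor arising from passing to the closure is absorbed back into the invariant open set $U$. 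With the $T_1$ property this yields regularity of $Z$.

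Finally, for $H\backslash G/K$ I would transport regularity along inversion rather than repeat the argument. The map $x\mapsto x^{-1}$ is a homeomorphism of $G$ carrying $KxH$ to $(KxH)^{-1}=Hx^{-1}K$, so it induces a bijection $K\backslash G/H\to H\backslash G/K$ commuting with the two open quotient maps; since inversion is a homeomorphism and $\pi$ is a quotient map, both the induced map and its inverse are continuous, so it is a homeomorphism. Thus $H\backslash G/K$ is homeomorphic to $K\backslash G/H$ and is therefore regular as well.
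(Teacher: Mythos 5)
Your proof is correct, and its engine is the same as the paper's: openness via $\pi^{-1}(\pi(U))=KUH$, the $T_1$ property via Proposition \ref{pro} (closedness of $KxH$ is exactly where strong neutrality is needed), and regularity by using a neutrality-type inclusion (your $VK\subseteq KO_1$, the paper's $WK\subseteq KV$) to absorb the stray translation factor that appears when passing to closures. Two local variations distinguish your write-up. First, you isolate the quotient/closure bookkeeping in the explicit identity $\overline{\pi(Vx_0)}=\pi\bigl(\overline{KVx_0H}\bigr)$, obtained from the standard formula $\overline{A}=\bigcap_W WA$, and only then apply neutrality once via $\overline{A}\subseteq VA$; the paper instead runs the same computation pointwise, fixing a single $W$ with $WK\subseteq KV$ and checking directly that $\pi(Wy)\cap\pi(Wx)\neq\emptyset$ forces $y\in WKWxH\subseteq KVWxH\subseteq KV^2xH\subseteq KUxH$, so that all accumulation points of $\pi(Wx)$ land in $O$. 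The two computations are the same inclusion chain in different packaging; yours makes the structure more transparent, the paper's avoids invoking the closure formula. Second, for $H\backslash G/K$ the paper simply says ``similarly,'' i.e., re-runs the symmetric argument with the right-handed form of (strong) neutrality, whereas you transport regularity along the inversion-induced map $KxH\mapsto Hx^{-1}K$, which is a homeomorphism because inversion is a homeomorphism of $G$, $(KxH)^{-1}=Hx^{-1}K$, and both canonical maps are quotient maps; this is a genuine (if small) economy. Your side remark that the regularity step needs only neutrality, strong neutrality entering solely through the $T_1$ step, is likewise consistent with how the paper's proof actually uses its hypotheses.
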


\begin{proof}
If $U$ is open in $G$, so is $KUH$ in $G$. Thus $\pi^{-1}(\pi(U))=KUH$ is open in $G$. $\pi(U)$ is open in $K\backslash G/H$, because $\pi$ is a quotient mapping. This implies that $\pi$ is open.

Take an arbitrary point $\pi(x)\in K\backslash G/H$, where $x\in G$. Then $\pi^{-1}(\pi(x))=KxH$ is closed in $G$ by Proposition \ref{pro}, because $K$ is strongly neutral and $xH$ is closed in $G$. $\pi(x)$ is closed in $G$ since $\pi$ is a quotient mapping.  This implies that $K\backslash G/H$ is a $T_1$-space.

Take any point $z=\pi(x)$ in $K\backslash G/H$ for some $x\in G$ and a neighbourhood $O$ of $z$. There exist open neighbourhoods $U$ and $V$ of the identity $e$ in $G$ such that $\pi(Ux)\subseteq O$ and $V^2\subseteq U$. Since $K$ is strongly neutral, one can choose an open symmetric neighbourhood $W$ of $e$ such that $WK\subseteq KV$. Clearly, $\pi(Wx)$ is an open neighbourhood of $\pi(x)$ in $K\backslash G/H$ such that $\pi(Wx)\subseteq O$, and we claim that the closure of $\pi(Wx)$ is contained in $O$.

Indeed, suppose that $\pi(Wy)\cap \pi(Wx)\neq\emptyset$ for some $y\in G$. Then $Wy\cap KWxH\neq\emptyset$. Thus we have: $$y\in WKWxH\subseteq KVWxH\subseteq KV^2xH\subseteq KUxH=\pi^{-1}\pi(Ux).$$
It follows that $\pi(y)\in \pi(Ux)\subseteq O $. Since $\pi(Wy)$ is an open neighbourhood of $\pi(y)$ in $K\backslash G/H$, we conclude that all accumulation points of $\pi(Wx)$ lie in $O$. Thus the space $K\backslash G/H$ is regular.

Similarly, one can prove that the space $H\backslash G/K$ is regular.
\end{proof}

\begin{theorem}\label{th1}
Let $K$ and $H$ be subgroups of a topological group $G$. If $K$ is neutral, then the family $\mathscr{B}_Z^r$ is a base for some right uniformity $\mathcal {U}_Z^r$ on the double coset space $Z = K\backslash G/H$, which induces the quotient topology on $Z$. If $H$ is
closed and $K$ is strongly neutral, then the family $\mathscr{B}_Z^r$ is a base for some right separated uniformity $\mathcal {U}_Z^r$ on the double coset space $Z = K\backslash G/H$, which induces the quotient topology on $Z$.
\end{theorem}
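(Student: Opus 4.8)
The plan is to verify that $\mathscr{B}_Z^r$ satisfies the hypotheses of Proposition \ref{p1} and then to identify the induced uniform topology with the quotient topology. Everything rests on the criterion obtained by unwinding the definition of $\pi$: for $x,y\in G$,
$$(\pi(x),\pi(y))\in E_V^r \iff \pi(y)\in\pi(Vx) \iff y\in KVxH.$$
From this the diagonal and intersection requirements fall out quickly. Since $e\in V$ we have $x\in KVxH$, so $\Delta_Z\subseteq E_V^r$. For $(BU1)$, given $V_1,V_2\in\mathscr{N}_s(e)$ I would take $V=V_1\cap V_2\in\mathscr{N}_s(e)$; then $K(V_1\cap V_2)xH\subseteq KV_ixH$ yields $E_V^r\subseteq E_{V_1}^r\cap E_{V_2}^r$.

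The heart of the argument is the composition condition $(BU2)$ together with the symmetry $E_V^r=-E_V^r$, and this is precisely where the neutrality of $K$ must be used. For $(BU2)$, given $V$ I would first choose a symmetric $V_1\in\mathscr{N}_s(e)$ with $V_1^2\subseteq V$ (continuity of multiplication in $G$), and then invoke neutrality of $K$ to pick a symmetric $W\in\mathscr{N}_s(e)$ with $W\subseteq V_1$ and $WK\subseteq KV_1$. If $(\pi(x),\pi(y)),(\pi(y),\pi(z))\in E_W^r$, then $y\in KWxH$ and $z\in KWyH$, so
$$z\in KW\,KWxH=K(WK)WxH\subseteq K(KV_1)WxH=KV_1WxH\subseteq KV_1^2xH\subseteq KVxH,$$
which is exactly $2E_W^r\subseteq E_V^r$. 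I expect the genuine obstacle to be the symmetry of $E_V^r$ (and, underlying it, the fact that $E_V^r$ is a well-defined relation on $Z$): for a single symmetric $V$ the sets $VK$ and $KV$ need not coincide, so $y\in KVxH$ does not immediately give $x\in KVyH$. The neutrality of $K$ is designed to repair this, by allowing one to interchange $K$ with arbitrarily small neighbourhoods of $e$; handling this carefully is what certifies that $E_V^r\in\mathcal{D}_Z$.

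Once $\mathscr{B}_Z^r$ is known to be a base for a uniformity $\mathcal{U}_Z^r$, I would check that it induces the quotient topology by computing balls. Directly from the membership criterion, $E_V^r[\pi(x)]=\pi(Vx)$ for every $x\in G$ and every $V\in\mathscr{N}_s(e)$. By Proposition \ref{pro1} the map $\pi$ is open, and it is continuous, so as $V$ ranges over $\mathscr{N}_s(e)$ the sets $\pi(Vx)$ form a neighbourhood base at $\pi(x)$ for the quotient topology. Hence, by Theorem \ref{t1}, the topology induced by $\mathcal{U}_Z^r$ coincides with the quotient topology on $Z$, which settles the first assertion.

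For the second assertion I would add the separation condition $(BU3)$, $\bigcap\mathscr{B}_Z^r=\Delta_Z$, using the stronger hypotheses. When $H$ is closed and $K$ is strongly neutral, Proposition \ref{pro1} guarantees that $Z=K\backslash G/H$ is $T_1$ (indeed regular). Since $\{E_V^r[\pi(x)]\}_V=\{\pi(Vx)\}_V$ is a neighbourhood base at $\pi(x)$ and $Z$ is $T_1$, we get $\bigcap_V E_V^r[\pi(x)]=\{\pi(x)\}$ for each $x$; that is, whenever $\pi(y)\neq\pi(x)$ some $E_V^r[\pi(x)]$ misses $\pi(y)$. This is exactly $\bigcap\mathscr{B}_Z^r=\Delta_Z$, so Proposition \ref{p1} now yields a separated uniformity, which by the first part still induces the quotient topology. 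This completes the plan.
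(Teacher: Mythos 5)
Your verification of $(BU1)$ and $(BU2)$, and your identification of the induced topology with the quotient topology, are exactly the paper's argument: the paper also fixes $O$ (your $V_1$) with $O^2\subseteq V$, takes a neutrality witness $W\subseteq O$ with $WK\subseteq KO$, runs the same chain $y\in KWKWxH\subseteq K(KO)WxH=KOWxH\subseteq KO^2xH\subseteq KVxH$, and then combines the openness of $\pi$ (Proposition \ref{pro1}) with the ball computation $E_V^r[\pi(x)]=\pi(Vx)$ to get both inclusions between the uniform and quotient topologies. Where you genuinely diverge is the separated case. The paper proves $(BU3)$ by hand: given $\pi(x)\neq\pi(y)$, it uses Proposition \ref{pro} to see that $KyH$ is closed, picks a symmetric $U$ with $Ux\cap KyH=\emptyset$ (so $x\notin UKyH$), then uses strong neutrality to choose $V$ with $KV\subseteq UK$ and concludes $(\pi(y),\pi(x))\notin E_V^r$. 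You instead deduce $\bigcap\mathscr{B}_Z^r=\Delta_Z$ from the $T_1$ property of $Z$ already established in Proposition \ref{pro1}, together with the fact that the balls $E_V^r[\pi(x)]=\pi(Vx)$ form a neighbourhood base at $\pi(x)$: if $\pi(y)\neq\pi(x)$, then $Z\setminus\{\pi(y)\}$ is open and contains $\pi(x)$, so some ball misses $\pi(y)$. This is correct and shorter on the page; it outsources to Proposition \ref{pro1} precisely the closed-double-coset argument (via Proposition \ref{pro}) that the paper redoes inline, so it is the same mathematics routed through an already-proved result rather than a new idea.

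One correction to your diagnosis of ``the genuine obstacle.'' Symmetry of $E_V^r$ costs nothing and does not use neutrality: membership in $E_V^r$ constrains only double cosets, so representatives may be adjusted. If $y=kvxh$ with $k\in K$, $v\in V$, $h\in H$, then $x=v^{-1}(k^{-1}y)h^{-1}\in V(k^{-1}y)H\subseteq KV(k^{-1}y)H$ since $V$ is symmetric, and $\pi(k^{-1}y)=\pi(y)$; hence $(\pi(y),\pi(x))\in E_V^r$. This is presumably why the paper never comments on $E_V^r\in\mathcal{D}_Z$. The actual subtlety hiding in your displayed criterion is different: the equivalence ``$(\pi(x),\pi(y))\in E_V^r$ iff $y\in KVxH$'' is representative-dependent in the first coordinate (replacing $x$ by $kxh$ turns $KVxH$ into $KVkxH$), and the representative-free criterion is $y\in KVKxH$. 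The paper's proof glosses this in exactly the same way, and neutrality is what repairs it: choosing $W$ with $WK\subseteq KV$ gives $KWKxH\subseteq KVxH$, so the relations defined with and without the extra $K$ are mutually cofinal and every containment in your (and the paper's) chains survives the correction. So your instinct that neutrality is needed to commute $K$ past small neighbourhoods of $e$ was sound, but it addresses the representative issue, not symmetry; and note that the repair you announced for symmetry was never actually carried out in your plan --- it just happens that the step you left open is trivially true.
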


\begin{proof}
Let $K$ be neutral. First, to show that the family $\mathscr{B}_Z^r$ is a base for some right uniformity $\mathcal {U}_Z^r$, it is only to prove that the family $\mathscr{B}_Z^r$ satisfies the conditions $(BU1)$-$(BU2)$ in Proposition \ref{p1}. i.e.,
\begin{enumerate}
\item[$(BU1$)] for arbitrary $E_{V_1}^r, E_{V_2}^r\in \mathscr{B}_Z^r$, there is a $ E_{V}^r\in \mathscr{B}_Z^r$ such that $E_{V}^r\subseteq E_{V_1}^r\cap E_{V_2}^r$;
\item[$(BU2)$] for arbitrary $E_{V}^r\in \mathscr{B}_Z^r$, there is a $ E_{W}^r\in \mathscr{B}_Z^r$ such that $2E_{W}^r\subseteq E_{V}^r$.
\end{enumerate}

For $(BU1)$, $E_{V_1\cap V_2}^r\subseteq E_{V_1}^r\cap E_{V_2}^r$ is evident, because $E_{U}^r\subseteq E_{W}^r$ whenever $U\subseteq W$.

For $(BU2)$, take arbitrary $E_{V}^r\in \mathscr{B}_Z^r$. For $V\in \mathscr{N}_s(e)$, choose a $O\in \mathscr{N}_s(e)$ such that $O^2\subseteq V$. Since $K$ is neutral, for $O$ there is a $W\in \mathscr{N}_s(e)$ such that $W\subseteq O$ and $WK\subseteq KO$. We claim that $2E_{W}^r\subseteq E_{V}^r$.

 Indeed, take arbitrary $(\pi(x),\pi(y))\in 2E_{W}^r$. Then there is $z\in G$ such that $(\pi(x),\pi(z))\in E_{W}^r$ and $(\pi(z),\pi(y))\in E_{W}^r$, which implies that $\pi(z)\in \pi(Wx)$ and $\pi(y)\in \pi(Wz)$, respectively. Thus, we have $z\in KWxH$ and $y\in KWzH$. This implies that $$y\in KWzH\subseteq  KWKWxHH\subseteq KKOWxHH=KOWxH\subseteq KO^2xH\subseteq KVxH.$$ Thus, $\pi(y)\in \pi(KVxH)=\pi(Vx)$, which implies that $(\pi(x),\pi(y))\in E_{V}^r$. Therefore, we have proved $2E_{W}^r\subseteq E_{V}^r$.
Thus, we have proved that the family $\mathscr{B}_Z^r$ is a base for some right uniformity $\mathcal {U}_Z^r$ on $Z$.

Next, we shall show that the topology $\mathcal {T}_1$ induced by the right uniformity $\mathcal {U}_Z^r$ on $Z$ is compatible with the quotient topology $\mathcal {T}_2$ on $Z$. On one hand, since $\pi$ is open by Proposition \ref{pro1}, $E_V^r[\pi(x)]=\pi(Vx)$ is open in $Z$ for each $x\in G$ and each $E_V^r\in \mathscr{B}_Z^r $. This implies that $\mathcal {T}_1\subseteq \mathcal {T}_2$, because the family $\mathscr{B}_Z^r$ is a base for the right uniformity $\mathcal {U}_Z^r$. On the other hand, take arbitrary non-empty open set $O$ in $\mathcal {T}_2$ and arbitrary point $\pi(x)\in O$ for some $x\in G$. Then $\pi^{-1}(O)$ is an open neighborhood of $x$ in $G$. One can find a $V\in \mathscr{N}_s(e)$ such that $Vx\subseteq \pi^{-1}(O)$. Thus $\pi(Vx)\subseteq O$ and $\pi(x)\in \pi(Vx)$. Since $\pi(Vx)=E_V^r[\pi(x)]$, we have $O$ is in $\mathcal {T}_1$. This shows that $\mathcal {T}_2\subseteq \mathcal {T}_1$.

Thus, we have proved that the topology induced by the right uniformity $\mathcal {U}_Z^r$ on $Z$ is compatible with the quotient topology on $Z$.

If $K$ is strongly neutral and $H$ is closed, then it is only to show that $\mathscr{B}_Z^r$ satisfies the conditions $(BU3)$ in Proposition \ref{p1}. i.e., $(BU3)$ $\Delta_Z=\bigcap \mathscr{B}_Z^r$.

For $(BU3)$, take arbitrary $\pi(x)\neq \pi(y)$ for some $x,y\in G$. Then $x\notin KyH$. Since $H$ is closed and $K$ is strongly neutral, $KyH$ is closed by Proposition \ref{pro}. Furthermore, there is a $U\in \mathscr{N}_s(e)$ such that $Ux\cap KyH=\emptyset$, which implies $x\notin UKyH$. For $U$, one can find a $V\in \mathscr{N}_s(e)$ such that $KV\subseteq UK$, because $K$ is strongly neutral and $UK$ is a neighborhood of $K$. We claim that $(\pi(y), \pi(x))\notin E_V^r$, which implies that $\Delta_Z=\bigcap \mathscr{B}_Z^r$. Indeed, if $(\pi(y), \pi(x))\in E_V^r$, then $\pi(x)\in \pi(Vy)$, which implies that $$x\in KVyH\subseteq UKyH.$$ This is a contradiction with $x\notin UKyH.$
\end{proof}

\begin{remark}\label{r}
From Theorem \ref{th1} and its proof it follows that:
\begin{enumerate}
\item[(1)] If $K=\{e\}$, then the family $\mathscr{B}_{G/H}^r$ is a base for some right uniformity $\mathcal {U}_{G/H}^r$ on the coset space $G/H$, which induces the quotient topology on $G/H$. In addition, if $H$ is closed, then the right uniformity $\mathcal {U}_{G/H}^r$ is separated.
\item[(2)] If $H=\{e\}$, then the family $\mathscr{B}_{K\backslash G}^r$ is a base for some right uniformity $\mathcal {U}_{K\backslash G}^r$ on the coset space $K\backslash G$, which induces the quotient topology on $K\backslash G$. In addition, if $K$ is closed, then the right uniformity $\mathcal {U}_{G/H}^r$ is separated.
\end{enumerate}
\end{remark}

Similarly, one can prove the following result.
\begin{theorem}\label{th11}
Let $K$ and $H$ be subgroups of a topological group $G$. If $H$ is neutral, then the family $\mathscr{B}_Z^l$ is a base for some left uniformity $\mathcal {U}_Z^l$ on the double coset space $Z = K\backslash G/H$, which induces the quotient topology on $Z$. If $K$ is
closed and $H$ is strongly neutral, then the family $\mathscr{B}_Z^l$ is a base for some left separated uniformity $\mathcal {U}_Z^l$ on the double coset space $Z = K\backslash G/H$, which induces the quotient topology on $Z$.
\end{theorem}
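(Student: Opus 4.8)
The plan is to mirror the proof of Theorem \ref{th1} line by line, interchanging the roles of left and right translation and letting the neutrality of $H$ (rather than of $K$) drive every estimate. The one structural fact to keep in mind throughout is that $\pi^{-1}\pi(xV)=KxVH$, so that the ball $E_V^l[\pi(x)]$ equals $\pi(xV)$, and that by Proposition \ref{pro1} the map $\pi$ is open. With this, $(BU1)$ is immediate and needs no hypothesis: since $E_U^l\subseteq E_W^l$ whenever $U\subseteq W$, one has $E_{V_1\cap V_2}^l\subseteq E_{V_1}^l\cap E_{V_2}^l$.

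The core computation is $(BU2)$. Given $E_V^l$ I would pick $O\in\mathscr{N}_s(e)$ with $O^2\subseteq V$ and then, using that $H$ is neutral, choose $W\in\mathscr{N}_s(e)$ with $W\subseteq O$ and $HW\subseteq OH$ (this is the ``equivalently'' form $HV\subseteq UH$ of neutrality). For $(\pi(x),\pi(y))\in 2E_W^l$ there is $z\in G$ with $z\in KxWH$ and $y\in KzWH$, and substituting and simplifying gives
$$y\in KzWH\subseteq KxWHWH=KxW(HW)H\subseteq KxW(OH)H=KxWOH\subseteq KxO^2H\subseteq KxVH,$$
whence $\pi(y)\in\pi(xV)$ and $2E_W^l\subseteq E_V^l$. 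Together with $(BU1)$, Proposition \ref{p1} then makes $\mathscr{B}_Z^l$ a base for a left uniformity $\mathcal{U}_Z^l$. I expect this step to be the only delicate one: the whole calculation hinges on absorbing the interior factor $HW$ into $OH$, so the main obstacle is the side-bookkeeping, i.e. correctly recognizing that it is the form $HW\subseteq OH$ of neutrality (the mirror image of the $WK\subseteq KO$ used for $\mathcal{U}_Z^r$) that makes the chain close up.

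For the compatibility of the induced topology $\mathcal{T}_1$ with the quotient topology $\mathcal{T}_2$ I would repeat the two inclusions from Theorem \ref{th1}. Since $\pi$ is open, each ball $E_V^l[\pi(x)]=\pi(xV)$ is open in $Z$, so $\mathcal{T}_1\subseteq\mathcal{T}_2$; conversely, for a nonempty open $O\in\mathcal{T}_2$ and any $\pi(x)\in O$ one has that $\pi^{-1}(O)$ is an open neighbourhood of $x$, hence there is $V\in\mathscr{N}_s(e)$ with $xV\subseteq\pi^{-1}(O)$, giving $\pi(xV)=E_V^l[\pi(x)]\subseteq O$ and thus $\mathcal{T}_2\subseteq\mathcal{T}_1$.

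Finally, under the hypotheses that $K$ is closed and $H$ is strongly neutral, I would establish $(BU3)$. Given $\pi(x)\neq\pi(y)$ one has $y\notin KxH$; here $Kx$ is closed because $K$ is closed, and hence $KxH=(Kx)H$ is closed by Proposition \ref{pro} applied to the strongly neutral subgroup $H$. Then choose $U\in\mathscr{N}_s(e)$ with $yU\cap KxH=\emptyset$, which by symmetry of $U$ gives $y\notin KxHU$; using neutrality of $H$ once more, choose $V\in\mathscr{N}_s(e)$ with $VH\subseteq HU$, so that $KxVH\subseteq KxHU$ and therefore $y\notin KxVH$, i.e. $(\pi(x),\pi(y))\notin E_V^l$. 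This yields $\bigcap\mathscr{B}_Z^l=\Delta_Z$, and Proposition \ref{p1} then gives that $\mathscr{B}_Z^l$ is a base for a separated left uniformity. Note that, exactly as in Theorem \ref{th1}, strong neutrality is used only to guarantee closedness of $KxH$ via Proposition \ref{pro}, while plain neutrality suffices for the separating step.
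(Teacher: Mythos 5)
Your proof is correct and coincides with the paper's intended argument: the paper gives no separate proof of Theorem \ref{th11}, stating only that it follows ``similarly'' to Theorem \ref{th1}, and your write-up is exactly that mirror image, correctly using the left-handed forms of neutrality ($HW\subseteq OH$ for $(BU2)$ and $VH\subseteq HU$ for the separation step) and Proposition \ref{pro} with the strongly neutral subgroup $H$ acting on the closed set $Kx$ to get $KxH$ closed. Your closing observation that strong neutrality enters only through the closedness of $KxH$ matches the structure of the paper's proof of Theorem \ref{th1} as well.
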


\begin{remark}\label{R}
From Theorem\ref{th11}, we can obtain that:
\begin{enumerate}
\item[(1)] If $K =\{e\}$, where $e$ is the identity in $G$, then the family $\mathscr{B}_{G/H}^l$ is a base for some left uniformity $\mathcal {U}_{G/H}^l$ on the quotient space $ G/H$. In addition, if $H$ is closed, then the left uniformity $\mathcal {U}_{G/H}^l$ is separated.
\item[(2)] If $H=\{e\}$, where $e$ is the identity in $G$, then the family $\mathscr{B}_{K\backslash G}^l$ is a base for some left uniformity $\mathcal {U}_{K\backslash G}^l$ on the quotient space $ K\backslash G$. In addition, if $K$ is closed, then the left uniformity $\mathcal {U}_{K\backslash G}^l$ is separated.
\end {enumerate}
\end{remark}

\begin{proposition}\label{Pro1}
Suppose that $K$ and $H$ are subgroups of a topological group $G$. If $K$ is strongly neutral, then the natural mapping $q_K:G/H\rightarrow K\backslash G/H$ defined by $q_K(\pi_H(x))=\pi(x)$, for each $x\in G$, is continuous, open and closed, where $\pi_H:G\longrightarrow G/H$ and $\pi:G\longrightarrow K\backslash G/H$ are the canonical mappings.
\end{proposition}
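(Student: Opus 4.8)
The plan is to build everything on the factorization $\pi = q_K\circ\pi_H$, which is precisely the defining relation $q_K(\pi_H(x))=\pi(x)$. This same relation also shows $q_K$ is well defined: if $\pi_H(x)=\pi_H(y)$, then $xH=yH$, whence $KxH=KyH$, i.e. $\pi(x)=\pi(y)$. I would record at the outset that both $\pi_H:G\to G/H$ and $\pi:G\to K\backslash G/H$ are quotient mappings by the definition of the quotient topology, and that $\pi_H$ is a surjection; these are the only structural facts driving the argument.

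For continuity I would invoke the universal property of the quotient map $\pi_H$. Given an open set $O\subseteq K\backslash G/H$, continuity of $\pi$ makes $\pi^{-1}(O)=\pi_H^{-1}(q_K^{-1}(O))$ open in $G$; since $\pi_H$ is a quotient mapping, this forces $q_K^{-1}(O)$ to be open in $G/H$, giving continuity of $q_K$. For openness, let $U$ be open in $G/H$; because $\pi_H$ is surjective, $U=\pi_H(\pi_H^{-1}(U))$, so $q_K(U)=\pi(\pi_H^{-1}(U))$. As $\pi_H^{-1}(U)$ is open in $G$ and $\pi$ is open by Proposition \ref{pro1}, the image $q_K(U)$ is open. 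Neither of these two steps requires strong neutrality; they hold for arbitrary subgroups $K$ and $H$.

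The main point, and the only place where the hypothesis enters, is closedness. Let $F$ be closed in $G/H$ and set $C=\pi_H^{-1}(F)$, a closed and $H$-saturated subset of $G$, so that $C=CH$. Exactly as in the open case, $q_K(F)=\pi(C)$, and since $\pi$ is a quotient mapping, $\pi(C)$ is closed in $K\backslash G/H$ if and only if $\pi^{-1}(\pi(C))$ is closed in $G$. Here $\pi^{-1}(\pi(C))=KCH=KC$ by $H$-saturation. At this stage Proposition \ref{pro} applies directly: since $K$ is strongly neutral and $C$ is closed, $KC$ is closed in $G$, hence $\pi(C)=q_K(F)$ is closed. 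The crux of the whole proposition is therefore this single appeal to Proposition \ref{pro}; the remaining manipulations are routine quotient-map bookkeeping, and the reduction to closedness of $KC$ is the step I would expect to state most carefully.
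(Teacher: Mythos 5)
Your proof is correct, but in the closedness step (the heart of the proposition) you take a genuinely different route from the paper. The continuity and openness steps coincide with the paper's: both rest on the factorization $\pi=q_K\circ\pi_H$, the quotient property of $\pi_H$, and the openness of $\pi$ from Proposition \ref{pro1}, and your observation that these two steps need no neutrality hypothesis is accurate. For closedness, however, the paper argues via the fiber criterion for closed maps: for $z=\pi(x)$ it computes $q_K^{-1}(z)=\pi_H(KxH)=\pi_H(Kx)$, and given an open set $O\supseteq\pi_H(Kx)$ it uses strong neutrality directly to produce a neighborhood $V$ of the identity with $KV\subseteq\pi_H^{-1}(O)x^{-1}$, hence $KVx\subseteq\pi_H^{-1}(O)$, so that $W=\pi(Vx)$ is an open neighborhood of $z$ satisfying $q_K^{-1}(W)=\pi_H(KVx)\subseteq O$; Proposition \ref{pro} is never invoked. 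You instead reduce closedness to the quotient topology on $K\backslash G/H$: for closed $F\subseteq G/H$, the set $C=\pi_H^{-1}(F)$ is closed and $H$-saturated, $q_K(F)=\pi(C)$, and $\pi(C)$ is closed if and only if its saturation $\pi^{-1}(\pi(C))=KCH=KC$ is closed in $G$, which is exactly the stated generality of Proposition \ref{pro} (it covers arbitrary closed sets, not just cosets). Your reduction is legitimate at every step and is arguably shorter, since it reuses a lemma the paper has already proved instead of re-running the strong-neutrality estimate; the paper's direct argument buys an explicit description of basic saturated neighborhoods of the fibers, namely $q_K^{-1}(\pi(Vx))=\pi_H(KVx)$, which is the kind of concrete information its subsequent uniformity computations trade in. Both arguments correctly isolate strong neutrality of $K$ as the only hypothesis needed for closedness.
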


\begin{proof}
Put $Z=K\backslash G/H$ and let $\pi:G\rightarrow Z$ be the canonical mapping. Then $\pi=q_K\circ \pi_H$ and, since the mappings $\pi$ and $\pi_H$ are open and continuous, so is $q_K$.

 \[
\xymatrix{ G\ar@{>}[r]^{\pi_H} \ar@{>}[dr]^{\pi}
& G/H \ar@{>}[d]^{q_K}   \\
   & K\backslash G/H
}
\]
It remains to verify that $q_K$ is a closed mapping. Let $z=\pi(x)$ be a point of $Z$. Then $q_K^{-1}(z)=\pi_H(KxH)=\pi_H(Kx)$. Take an arbitrary open neighborhood $O$ of $\pi_H(Kx)$ in $G/H$. Then $Kx\subseteq \pi_H^{-1}(O)$. It is equivalent to $K\subseteq \pi_H^{-1}(O)x^{-1}$. Since $K$ is strongly neutral and $\pi_H^{-1}(O)x^{-1}$ is open, there is an open neighborhood $V$ at the identity in $G$ such that $KV\subseteq \pi_H^{-1}(O)x^{-1}$. This implies that $KVx\subseteq \pi_H^{-1}(O)$. In particular,  $\pi_H(KVx)\subseteq O$. It is obvious that $W=\pi(Vx)$ is an open neighborhood of $z$ in $Z$ which satisfies $$q_K^{-1}(W)=q_K^{-1}(\pi(Vx))=\pi_H(\pi^{-1}(\pi(Vx)))=\pi_H(KVxH)=\pi_H(KVx)\subseteq O.$$
We have proved that, for every neighborhood $O$ of the fiber $q_K^{-1}(z)$ in $G/H$, there exists an open neighborhood $W$ of $z$ in $Z$ satisfying $q_K^{-1}(W)\subseteq O$. Therefore, the mapping $q_K$ is closed.
\end{proof}

\begin{proposition}\label{pro3}
Suppose that $H$ and $K$ are subgroups in a topological group $G$ such that $H$ is closed and $K$ is neutral. If the quotient space $K\backslash G$ is first-countable, then the double coset space $K\backslash G/H$ is pseudometrizable. If, in addition, $K$ is strongly neutral, then the double coset space $K\backslash G/H$ is metrizable
\end{proposition}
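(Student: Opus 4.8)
The plan is to put on $Z = K\backslash G/H$ the right uniformity $\mathcal{U}_Z^r$ furnished by Theorem \ref{th1} and then invoke the metrization criterion of Theorem \ref{T1}. Since $K$ is neutral, Theorem \ref{th1} guarantees that $\mathscr{B}_Z^r = \{E_V^r : V\in\mathscr{N}_s(e)\}$ is a base for a right uniformity $\mathcal{U}_Z^r$ on $Z$ inducing the quotient topology. By Theorem \ref{T1} it then suffices to show that $\mathcal{U}_Z^r$ has a countable base: pseudometrizability will follow immediately, and once we additionally know that $\mathcal{U}_Z^r$ is separated, metrizability will follow as well. So the whole proof reduces to producing a countable base for $\mathcal{U}_Z^r$.

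This is exactly where first-countability of $K\backslash G$ enters. Writing $q:G\to K\backslash G$ for the canonical map, openness of $q$ shows that $\{q(V):V\in\mathscr{N}_s(e)\}$ is a neighbourhood base at the point $q(e)=K$; since $K\backslash G$ is first-countable at this point, I can select a countable family $\{U_n:n\in\omega\}\subseteq\mathscr{N}_s(e)$ such that $\{q(U_n):n\in\omega\}$ is already a neighbourhood base at $q(e)$. The crucial monotonicity linking the two structures is that, for $U,W\in\mathscr{N}_s(e)$,
$$q(U)\subseteq q(W)\ \Longleftrightarrow\ KU\subseteq KW\ \Longrightarrow\ KUxH\subseteq KWxH\ (\forall x\in G)\ \Longrightarrow\ E_U^r\subseteq E_W^r,$$
where the equivalence uses surjectivity of $q$ together with $q^{-1}(q(U))=KU$.

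Granting this, I claim that $\{E_{U_n}^r:n\in\omega\}$ is a base for $\mathcal{U}_Z^r$. Indeed, given any $E_V^r\in\mathscr{B}_Z^r$, the set $q(V)$ is a neighbourhood of $q(e)$, so $q(U_n)\subseteq q(V)$ for some $n$, whence $E_{U_n}^r\subseteq E_V^r$ by the displayed chain. As $\mathscr{B}_Z^r$ is itself a base for $\mathcal{U}_Z^r$, its countable subfamily $\{E_{U_n}^r:n\in\omega\}$ is a base too. Theorem \ref{T1} now gives that $Z=K\backslash G/H$ is pseudometrizable.

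For the second assertion, assume in addition that $K$ is strongly neutral. As $H$ is closed, the second half of Theorem \ref{th1} yields that $\mathcal{U}_Z^r$ is a separated uniformity, i.e. $\bigcap\mathscr{B}_Z^r=\Delta_Z$. Together with the countable base constructed above, Theorem \ref{T1} then gives that $Z=K\backslash G/H$ is metrizable. The only genuinely non-routine step is the monotone correspondence between neighbourhoods of $q(e)$ in $K\backslash G$ and the entourages $E_V^r$, which is what converts the first-countability hypothesis directly into a countable base for $\mathcal{U}_Z^r$; everything else is a straightforward application of Theorems \ref{th1} and \ref{T1}.
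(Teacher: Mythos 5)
Your proposal is correct and takes essentially the same route as the paper: both reduce the claim, via Theorems \ref{th1} and \ref{T1}, to producing a countable base for the uniformity $\mathcal{U}_Z^r$, which is extracted from first-countability of $K\backslash G$ through the correspondence $q(U)\subseteq q(W)\Leftrightarrow KU\subseteq KW$ (equivalently $q^{-1}(q(U))=KU$). The only cosmetic difference is that you select the $U_n$ symmetric from the outset, while the paper starts from arbitrary open $V_n$ and symmetrizes by setting $O_n=KV_n\cap (KV_n)^{-1}$.
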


\begin{proof}
Put $Z=K\backslash G/H$. According to Theorem \ref{th1}, $\mathscr{B}_Z^r$ is a base for some right uniformity and right separated uniformity $\mathcal {U}_Z^r$ on $Z$ whenever $K$ is neutral and strongly neutral, respectively. Thus, to show that the double coset space $K\backslash G/H$ is (pesudo)metrizable it is only to show that $\mathcal {U}_Z^r$ has a countable base by Theorem \ref{T1}.

Let $q:G\rightarrow K\backslash G$ and $\pi:G\rightarrow K\backslash G/H$ are the natural quotient mappings. Then there is a continuous open mapping $q_K: K\backslash G\rightarrow K\backslash G/H$ defined by $q_K(q(x))=\pi(x)$ for each $x\in G$, because $\pi$ and $q$ are continuous and open by Proposition \ref{pro1}.

\[
\xymatrix{ G\ar@{>}[d]_{q} \ar@{>}[dr]^{\pi}
  \\
 K\backslash G\ar@{>}[r]^{q_K}  & K\backslash G/H
}
\]

If $K\backslash G$ is first-countable, then let $\beta=\{V_n:n\in \omega\}$ be a family of open neighborhoods of $e$ such that $\{q(V_n):n\in \omega\}$ is a countable neighborhood base at $q(e)$, where $e$ is the identity in a topological group $G$. Then every element of the family $\gamma=\{O_n:n\in \omega\}$ is an open symmetric neighborhood of
$e$ in a topological group $G$, where $O_n=KV_n\cap (KV_n)^{-1}$ for each $V_n\in\beta$. We claim that the family $\{E_{O_n}^r: O_n\in \gamma\}\subseteq \mathscr{B}_Z^r$ is a base of $\mathcal {U}_Z^r$.

Since $\mathscr{B}_Z^r$ is a base of $\mathcal {U}_Z^r$, it is enough to show that arbitrary element of $\mathscr{B}_Z^r$ contains an element of $\{E_{O_n}^r: O_n\in \gamma\}$. Take arbitrary $E_V^r\in \mathscr{B}_Z^r$. Then $q(V)$ is an open neighborhood of $q(e)$, so there is a $V_n\in \beta$ such that $q(V_n)\subseteq q(V)$. This implies that $KV_n=q^{-1}(q(V_n))\subseteq q^{-1}(q(V))=KV$. Thus, $O_n=KV_n\cap (KV_n)^{-1}\subseteq KV_n\subseteq KV$. We claim that $E_{O_n}^r\subseteq E_V^r$. Indeed, take arbitrary point $(\pi(x),\pi(y))\in E_{O_n}^r$ for some $x,y\in G$. Then $\pi(y)\in \pi(O_nx)$, which implies that $KyH\subseteq KO_nxH$. By $O_n\subseteq KV$, we have that $$KyH\subseteq KO_nxH\subseteq KKVxH=KVxH,$$
which implies that $$\pi(y)\in\pi(KyH)\subseteq \pi(KVxH)= \pi(Vx).$$ Thus $(\pi(x),\pi(y))\in E_V^r$.
\end{proof}

In Proposition \ref{pro3}, when $K$ is the identity of $G$, we have the following result.
\begin{corollary}
Let $H$ be a closed subgroup of a first-countable topological group $G$ (without any separator axiom on $G$). Then the quotient space $G/H$ is metrizable.
\end{corollary}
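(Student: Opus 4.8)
The plan is to obtain the corollary as the special case $K=\{e\}$ of Proposition \ref{pro3}. The first task is to check that the trivial subgroup $K=\{e\}$ meets the hypotheses of the ``in addition'' clause. Strong neutrality is immediate: for any open neighborhood $O$ of $\{e\}$ in $G$ we may take $V=O$, so that $KV=V\subseteq O$, and the defining condition holds (in particular $\{e\}$ is neutral as well). Likewise, the quotient $K\backslash G=\{e\}\backslash G$ is canonically homeomorphic to $G$, so the requirement that $K\backslash G$ be first-countable reduces exactly to the stated assumption that $G$ is first-countable.

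With these identifications in hand, every condition of the second half of Proposition \ref{pro3} is satisfied: $H$ is closed, $K=\{e\}$ is strongly neutral, and $K\backslash G\cong G$ is first-countable. Since $K\backslash G/H=\{e\}\backslash G/H$ is canonically $G/H$, Proposition \ref{pro3} yields that $G/H$ is metrizable, which is the assertion. Thus the proof is essentially a substitution, and the body of the argument is already carried by the earlier results.

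The one point I would take care to justify is the phrase ``without any separation axiom on $G$.'' Although the standing convention in the paper is that topological groups are Hausdorff, the Hausdorffness of $G$ is never actually invoked in the chain of results used here: Theorem \ref{th1}, Proposition \ref{pro3}, and the metrizability criterion of Theorem \ref{T1}. What produces separatedness of the induced uniformity is solely the closedness of $H$, since this is precisely what supplies condition $(BU3)$, namely $\bigcap\mathscr{B}_{G/H}^r=\Delta_{G/H}$; indeed, with $K=\{e\}$ the set $yH$ is closed because $H$ is (Proposition \ref{pro}), which is what the verification of $(BU3)$ in Theorem \ref{th1} requires. Hence closedness of $H$ alone upgrades pseudometrizability to genuine metrizability, and the resulting $G/H$ is automatically Hausdorff even when $G$ is not. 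This is the only subtlety worth spelling out; there is no substantive obstacle beyond confirming that no step secretly relies on a separation axiom for $G$.
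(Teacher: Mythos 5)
Your proposal is correct and takes essentially the same approach as the paper: the paper obtains this corollary precisely by specializing Proposition \ref{pro3} to $K=\{e\}$, exactly the substitution you perform. Your additional check that closedness of $H$ alone supplies condition $(BU3)$ in Theorem \ref{th1}, so that no separation axiom on $G$ is needed, is a careful elaboration of a point the paper leaves implicit rather than a different argument.
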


\section{The coset spaces of generalized countably compact topological groups}
In this section, we shall study the coset spaces of generalized countably compact topological groups. First, recall some concepts as following:

\begin{lemma}\label{Lema1}
Let $H$ be a subgroup of a topological group $G$.
If $H$ is of countable character $\gamma = \{V_n : n\in \omega\}$ in $G$ such that $V^2_{n+1}\subseteq V_n$ for each $n\in \omega$, then $H$ is strongly neutral. Furthermore, the quotient mapping $\pi$ of $G$ onto the quotient space $H\backslash G$ is closed and open, and $H\backslash G$ is metrizable.
\end{lemma}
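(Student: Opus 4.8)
The plan is to treat the three claims in order, the key observation being that each $V_{n+1}$ is itself an open neighborhood of the identity $e$, since $e\in H\subseteq V_{n+1}$. To see that $H$ is strongly neutral, let $O$ be any open neighborhood of $H$ in $G$. As $\gamma=\{V_n:n\in\omega\}$ is a neighborhood base of $H$, choose $n$ with $V_n\subseteq O$ and put $W=V_{n+1}$. Then $W$ is an open neighborhood of $e$, and since $H\subseteq V_{n+1}$ the hypothesis $V_{n+1}^2\subseteq V_n$ gives
$$HW=HV_{n+1}\subseteq V_{n+1}V_{n+1}=V_{n+1}^2\subseteq V_n\subseteq O,$$
and symmetrically $WH\subseteq O$; this is precisely strong neutrality. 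As an immediate consequence, applying Proposition \ref{pro} to the closed set $F=\{e\}$ shows that $H=H\{e\}$ is closed in $G$, and then Corollary \ref{cor} yields that $\pi:G\to H\backslash G$ is open and closed.

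For the metrizability of $H\backslash G$, I would first record that it is first-countable. Since $\pi$ is open, each $\pi(V_n)$ is an open neighborhood of $\pi(e)$; conversely, if $\mathcal O$ is an open neighborhood of $\pi(e)$ in $H\backslash G$, then $\pi^{-1}(\mathcal O)$ is an open set containing $H=\pi^{-1}(\pi(e))$, so it contains some $V_n$ and hence $\pi(V_n)\subseteq\mathcal O$. Thus $\{\pi(V_n):n\in\omega\}$ is a countable neighborhood base at $\pi(e)$; and since the right translations $Hx\mapsto Hxg$ are homeomorphisms of $H\backslash G$, the space is first-countable everywhere.

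To finish, I would apply Proposition \ref{pro3} with its subgroup ``$K$'' taken to be $H$ and its subgroup ``$H$'' taken to be $\{e\}$: here ``$K$'' is strongly neutral by the first step, ``$H$'' $=\{e\}$ is closed because $G$ is Hausdorff, and the quotient ``$K\backslash G$'' $=H\backslash G$ is first-countable, so the proposition gives that $H\backslash G$ is metrizable. Alternatively, one can argue directly through the uniformity of Remark \ref{r}(2): the family $\mathscr B_{H\backslash G}^r$ is a base for a separated right uniformity on $H\backslash G$ (separatedness uses that $H$ is closed), and with $O_n=V_n\cap V_n^{-1}\in\mathscr N_s(e)$ the countable subfamily $\{E^r_{O_n}:n\in\omega\}$ is a base for it; the verification reduces to the inclusion $HO_n\subseteq HV$ for a suitable $n$, which holds once $V_n$ is chosen inside the open neighborhood $HV$ of $H$, and metrizability then follows from Theorem \ref{T1}. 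The only delicate point is the first step: one must read the product condition $V_{n+1}^2\subseteq V_n$ --- stated for neighborhoods of $H$ rather than of $e$ --- as the strong-neutrality estimate, using each $V_n$ simultaneously as a neighborhood of $H$ and as a neighborhood of $e$ whose $\pi$-image is a basic neighborhood of $\pi(e)$; the remaining steps are routine uniformity bookkeeping.
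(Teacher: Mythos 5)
The paper states Lemma \ref{Lema1} without proof --- it is invoked as a known tool in the style of \cite{AT1} --- so there is no internal argument to compare against; assessed on its own terms, your proof is correct, and it is precisely the assembly of the paper's Sections 2--3 machinery that the statement implicitly relies on. The strong-neutrality step is right: each $V_{n+1}$ is simultaneously a neighborhood of $H$ and of $e$, so from $V_n\subseteq O$ you get $HV_{n+1}\subseteq V_{n+1}^2\subseteq V_n\subseteq O$, and likewise $V_{n+1}H\subseteq O$. The one genuinely delicate point is that the lemma never assumes $H$ closed, yet metrizability of $H\backslash G$ forces it; you resolve this correctly by applying Proposition \ref{pro} with $F=\{e\}$ (legitimate under the paper's standing Hausdorff assumption) to conclude that a strongly neutral subgroup is automatically closed, which is exactly what the separatedness of the uniformity and the closedness of points of $H\backslash G$ require. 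Corollary \ref{cor} then yields that $\pi$ is open and closed; your verification that $\{\pi(V_n):n\in\omega\}$ is a neighborhood base at $\pi(e)$, extended to all of $H\backslash G$ by the right-translation homeomorphisms $Hx\mapsto Hxg$, supplies precisely the first-countability hypothesis of Proposition \ref{pro3}; and the application of that proposition with its ``$K$'' taken to be $H$ and its ``$H$'' taken to be $\{e\}$ is legitimate, since then $K\backslash G/H$ coincides with $H\backslash G$ and no circularity arises. Your alternative direct route --- Remark \ref{r}(2), the symmetric sets $O_n=V_n\cap V_n^{-1}$, the inclusion $HO_n\subseteq HV_n\subseteq HV$ once $V_n\subseteq HV$, and Theorem \ref{T1} --- is equally sound and mirrors the countable-base argument in the proof of Proposition \ref{pro3}. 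I see no gaps.
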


\begin{lemma}\label{Lema2} \cite[Lemma 1.7.6]{LY}
Let $\{U_n:n\in \omega\}$ be a decrease sequence of open sets in $X$ such that $\bigcap_{n\in \omega}U_n=\bigcap_{n\in \omega}\overline{U_n}$. Put $C=\bigcap_{n\in \omega}U_n$. Then the following statements are equivalent:
\begin{enumerate}
\item[(1)] each sequence $\{x_n:n\in\omega\}$ with $x_i\in U_i$ for each $i\in\omega$ has a cluster point in $X$;
\item[(2)] the family $\{U_n:n\in \omega\}$ is a base at $C$ in $X$ and $C$ is a countably compact set in $X$.
\end{enumerate}
\end{lemma}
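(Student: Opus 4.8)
The plan is to prove the two implications separately, in both cases recoding the ``cluster point'' condition in terms of the decreasing closed sets $F_n=\overline{\{x_k:k\ge n\}}$. The elementary but decisive observation is that a point $p$ is a cluster point of the sequence $\{x_n\}$ if and only if $p\in\bigcap_{n\in\omega}F_n$, and that when $x_k\in U_k$ for every $k$ one has $\{x_k:k\ge n\}\subseteq U_n$ (the $U_n$ decrease), whence $F_n\subseteq\overline{U_n}$. Combined with the hypothesis $\bigcap_{n}U_n=\bigcap_{n}\overline{U_n}=C$, this shows that every cluster point of such a sequence automatically lies in $C$, which is the bridge linking both conditions.

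For $(1)\Rightarrow(2)$ I would argue directly. To obtain countable compactness of $C$, take a sequence $\{y_k\}$ in $C$; since $C\subseteq U_k$ for every $k$, the sequence itself satisfies $y_k\in U_k$, so by (1) it has a cluster point, which by the observation above lies in $C$ — hence $C$ is countably compact. To see that $\{U_n\}$ is a base at $C$, suppose some open $W\supseteq C$ contains no $U_n$; choosing $x_n\in U_n\setminus W$ produces a sequence whose cluster point $p$ lies in $C\subseteq W$, yet $W$ is an open neighbourhood of $p$ meeting $\{x_n\}$ in only finitely many terms, a contradiction.

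For $(2)\Rightarrow(1)$ I would show $\bigcap_{n}F_n\neq\emptyset$ by contradiction. If $\bigcap_{n}F_n=\emptyset$, then $\{X\setminus F_n\}$ is an increasing open cover of $C$; since $C$ is countably compact and the cover is increasing, a single member suffices, so $C\cap F_N=\emptyset$ for some $N$. Then $X\setminus F_N$ is an open set containing $C$, and the base-at-$C$ property yields an $m$ with $U_m\subseteq X\setminus F_N$, i.e.\ $U_m\cap F_N=\emptyset$. But for any $k\ge\max\{m,N\}$ the point $x_k$ lies both in $U_k\subseteq U_m$ and in $\{x_j:j\ge N\}\subseteq F_N$, contradicting $U_m\cap F_N=\emptyset$.

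The main obstacle is the direction $(2)\Rightarrow(1)$: one cannot conclude $\bigcap_{n}F_n\neq\emptyset$ simply by intersecting the $F_n$ with $C$, since an individual $F_n$ need not meet $C$ at all. The correct device is to convert countable compactness of $C$ into the statement that an increasing countable open cover of $C$ admits a single covering member, and then to feed the resulting open superset of $C$ into the base-at-$C$ property; marrying these two ingredients of (2) is precisely the step that must be handled with care.
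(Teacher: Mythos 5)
The paper offers no proof of this lemma at all: it is quoted from \cite[Lemma 1.7.6]{LY}, so there is no in-text argument to compare against, and your proposal must stand on its own — which it does; it is correct and complete. Your recoding of cluster points via the tail closures $F_n=\overline{\{x_k:k\ge n\}}$ is the standard device, and you invoke the hypothesis $\bigcap_{n\in\omega}U_n=\bigcap_{n\in\omega}\overline{U_n}$ exactly where it is needed, namely in $(1)\Rightarrow(2)$, where $F_n\subseteq\overline{U_n}$ forces every cluster point of a sequence with $x_n\in U_n$ into $C$, yielding both countable compactness of $C$ and (by the $x_n\in U_n\setminus W$ contradiction) the base-at-$C$ property. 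Your treatment of $(2)\Rightarrow(1)$ is the delicate point and you handle it correctly: since an individual $F_n$ need not meet $C$, one cannot argue inside $C$ directly, and your detour — applying countable compactness of $C$ to the increasing open cover $\{X\setminus F_n\}$ to obtain $C\cap F_N=\emptyset$, then feeding the open set $X\setminus F_N\supseteq C$ into the base-at-$C$ property to get $U_m\cap F_N=\emptyset$, contradicted by $x_k\in U_m\cap F_N$ for $k\ge\max\{m,N\}$ — is precisely the right way to combine the two halves of (2), and is essentially the classical argument behind the cited lemma.
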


Recall that a continuous mapping $f:X\rightarrow Y$ is {\it quasi-perfect} if $f$ is closed such that $f^{-1}(y)$ is countably compact for each $y\in Y$. Recall that a space is an {\it $M$-space} if it is a quasi-perfect preimage of a metrizable space \cite{M}. A point $x\in X$ is called a {\it $q$-point} of a space $X$ if there exists a sequence $\{U_n:n\in\omega\}$ of open neighborhoods
of $x$ in $X$ such that any sequence $\{x_n:n\in\omega\}$ of points in $X$ such that $x_n\in U_n$ for each $n\in\omega$ has a cluster point.
A space $X$ is said to be a {\it$q$-space} if every point of it is a $q$-point.

\begin{theorem}\label{th}
Suppose that $G$ is a topological group  with a $q$-point and $H$ is a closed subgroup of $G$. Then the quotient space $G/H$ is an open and quasi-perfect preimage of a metrizable space. In particular, $G/H$ is an $M$-space.
\end{theorem}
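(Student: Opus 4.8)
The plan is to manufacture, out of the $q$-sequence at the identity, a countably compact closed subgroup $N$ of countable character, and then to realise $G/H$ as an open quasi-perfect preimage of the double coset space $N\backslash G/H$, which the machinery of Section~\ref{Sec:2} forces to be metrizable.

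First I would exploit homogeneity: since $G$ is a topological group it is a homogeneous space, so the existence of a single $q$-point yields a $q$-sequence $\{U_n:n\in\omega\}$ at the identity $e$ (translate the given sequence to $e$). Next I would refine this sequence, choosing inductively open symmetric neighbourhoods $V_n$ of $e$ with $V_n\subseteq U_n\cap V_{n-1}$ and $V_{n+1}^2\subseteq V_n$ for every $n$, and set $N=\bigcap_{n\in\omega}V_n$. The relation $V_{n+1}^2\subseteq V_n$ gives $\overline{V_{n+1}}\subseteq V_{n+1}^2\subseteq V_n$, hence $\bigcap_n V_n=\bigcap_n\overline{V_n}$, so $N$ is closed; the same relation together with the symmetry of the $V_n$ shows at once that $N$ is a subgroup. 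Because $V_n\subseteq U_n$, the family $\{V_n\}$ is again a $q$-sequence, so every selection $\{x_n\}$ with $x_n\in V_n$ has a cluster point. Lemma~\ref{Lema2}, applied to the decreasing family $\{V_n\}$ with $\bigcap_n V_n=\bigcap_n\overline{V_n}$, then tells me that $N$ is countably compact and that $\{V_n\}$ is a base at $N$; in other words $N$ is a countably compact closed subgroup of countable character $\{V_n\}$ with $V_{n+1}^2\subseteq V_n$.

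With $N$ in hand the rest is assembling earlier results. By Lemma~\ref{Lema1} the subgroup $N$ is strongly neutral and the quotient $N\backslash G$ is metrizable, in particular first-countable. Applying Proposition~\ref{pro3} to the strongly neutral $N$, the closed $H$, and the first-countable $N\backslash G$, I conclude that the double coset space $Z=N\backslash G/H$ is metrizable. Proposition~\ref{Pro1}, again using that $N$ is strongly neutral, furnishes the natural map $q_N\colon G/H\to Z$ given by $q_N(\pi_H(x))=\pi(x)$, which is continuous, open and closed. It remains only to identify the fibres: for $z=\pi(x)$ one has $q_N^{-1}(z)=\pi_H(NxH)=\pi_H(Nx)$, and since $Nx$ is a right translate of the countably compact set $N$ it is countably compact, so its continuous image $\pi_H(Nx)$ is countably compact as well. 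Hence $q_N$ is an open quasi-perfect map of $G/H$ onto the metrizable space $Z$, and $G/H$ is an $M$-space by definition.

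I expect the only genuine obstacle to lie in the construction and analysis of $N$ in the first two steps: the sequence $\{V_n\}$ must be chosen so that it is simultaneously a $q$-sequence, a descending chain of symmetric neighbourhoods realising a subgroup, and a chain for which $\bigcap_n V_n=\bigcap_n\overline{V_n}$, since this last equality is exactly what lets Lemma~\ref{Lema2} upgrade ``every selection sequence clusters'' into ``$N$ is countably compact with $\{V_n\}$ a base''. Once $N$ is recognised as a countably compact strongly neutral subgroup of countable character, every remaining assertion is a direct invocation of Lemma~\ref{Lema1}, Proposition~\ref{pro3} and Proposition~\ref{Pro1}, with the countable compactness of the fibres as the only additional routine check.
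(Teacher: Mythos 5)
Your proposal is correct and follows essentially the same route as the paper's own proof: the same inductive construction of symmetric neighbourhoods $V_n$ with $V_{n+1}^2\subseteq V_n$ yielding the closed countably compact subgroup $N=\bigcap_n V_n$ (the paper's $K$) via Lemma~\ref{Lema2}, then Lemma~\ref{Lema1}, Proposition~\ref{pro3} and Proposition~\ref{Pro1} to obtain the open, closed map onto the metrizable double coset space $N\backslash G/H$ with countably compact fibres $\pi_H(Nx)$. Your explicit observation that $\overline{V_{n+1}}\subseteq V_{n+1}^2\subseteq V_n$ secures the hypothesis $\bigcap_n V_n=\bigcap_n\overline{V_n}$ of Lemma~\ref{Lema2} is exactly the step the paper also uses, so there is nothing to add.
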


\begin{proof}
By the homogeneity of $G$, we can assume that the identity $e$ is a $q$-point. Then there is a sequence $\{U_n:n\in\omega\}$ of open neighborhoods of $e$ in $G$ satisfies that any sequence $\{x_n:n\in\omega\}$ of points such that $x_n\in U_n$ for each $n\in\omega$ has a cluster point in $G$. We define by induction a sequence $\{V_n:n\in\omega\}$ of symmetric open neighborhoods of $e$ in $G$ satisfying the following conditions:
\begin{enumerate}
\item[(1)] $V_0\subseteq  U_0$;
\item[(2)] $V_{n+1}^2\subseteq V_{n}$ and $V_{n+1}\subseteq U_{n+1}$ for each $n\in \omega$.
\end {enumerate}
Since $V_{n+1}^2\subseteq V_{n}$ and $V_n$ is a symmetric open neighborhood of $e$ for each $n\in \omega$, we have $\overline{V_{n+1}}\subseteq V_n$. One can easily show that the sequence $\{V_n:n\in \omega\}$ satisfies the condition (1) in Lemma \ref{Lema2}. Put $K=\bigcap_{n\in \omega}V_n$. Then $K$ is a closed and countably compact subgroup in $G$ such that the sequence $\{V_n:n\in \omega\}$ is a countable neighborhood base at $K$ in $G$ by Lemma \ref{Lema2}. Thus, by Lemma \ref{Lema1}, $K$ is a strongly neutral subgroup in $G$ and the quotient space $K\setminus G$ is metrizable.

Let $\pi:G\rightarrow G/H$ and $q:G\rightarrow K\backslash G/H$ be natural quotient mappings. Define a natural mapping $f:G/H\rightarrow K\backslash G/H$ by $f(\pi(x))=q(x)$ for each $x\in G$.

 \[
\xymatrix{ G\ar@{>}[r]^{\pi} \ar@{>}[dr]^{q}
& G/H \ar@{>}[d]^{f}   \\
   & K\backslash G/H
}
\]

 Since $K$ is strongly neutral and $K\setminus G$ is metrizable, $K\backslash G/H$ is metrizable by Proposition \ref{pro3}. By Proposition \ref{Pro1}, $f$ is a continuous, open and closed mapping from $G/H$ onto the metrizable space $K\backslash G/H$. To show that $f$ is quasi-perfect it is enough to prove that $f^{-1}(q(x))$ is countably compact for each $x\in G$. Indeed, since $f^{-1}(q(x))=\pi(q^{-1}q(x))=\pi(KxH)= \pi(Kx)$, $f^{-1}(q(x))$ is countably compact as a continuous image of the countably compact set $Kx$, because $K$ is countably compact.
\end{proof}

It is well known that every $M$-space is a $q$-space. Thus from Theorem \ref{th} it follows the following result:

\begin{corollary}\cite[Theorem 6]{PY}
A topological group $G$ is a $q$-space if and only if $G$ is an open quasi-perfect preimage of a metrizable.
\end{corollary}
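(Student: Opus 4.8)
The plan is to read this corollary as nothing more than the special case $H=\{e\}$ of Theorem~\ref{th}, combined with the standard fact, already announced in the text, that every $M$-space is a $q$-space. So I would split the biconditional into its two implications and dispatch each by appeal to a result already in hand.

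For the forward implication, suppose $G$ is a $q$-space. Then every point of $G$ is a $q$-point; in particular $G$ has a $q$-point, which is exactly the hypothesis of Theorem~\ref{th}. (In fact, for topological groups ``$G$ is a $q$-space'' and ``$G$ has a $q$-point'' coincide by homogeneity, since a $q$-sequence at one point can be translated to any other point.) Since $G$ is assumed Hausdorff, the trivial subgroup $\{e\}$ is closed in $G$, and the canonical quotient $G/\{e\}$ is homeomorphic to $G$ itself. I would therefore apply Theorem~\ref{th} with $H=\{e\}$ and read off directly that $G=G/\{e\}$ is an open and quasi-perfect preimage of a metrizable space.

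For the reverse implication, suppose $G$ is an open quasi-perfect preimage of a metrizable space. Discarding the openness, this says precisely that $G$ is a quasi-perfect preimage of a metrizable space, i.e.\ $G$ is an $M$-space by definition, so it remains only to invoke the well-known fact that every $M$-space is a $q$-space. If one prefers this last step self-contained, the argument is short: writing $f\colon G\to Y$ for the quasi-perfect map onto a metrizable $Y$, fix $x\in G$, put $y=f(x)$, take a decreasing countable base $\{W_n\}$ at $y$, and set $U_n=f^{-1}(W_n)$. For any sequence with $x_n\in U_n$ one has $f(x_n)\in W_n$, hence $f(x_n)\to y$, and then the closedness of $f$ together with the countable compactness of the fiber $f^{-1}(y)$ forces $\{x_n\}$ to have a cluster point, which is necessarily mapped by $f$ to $y$.

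I do not expect a genuine obstacle here, since Theorem~\ref{th} furnishes the forward direction verbatim once $H$ is specialized to $\{e\}$, and the reverse direction is just the definition of $M$-space together with the standard implication that every $M$-space is a $q$-space. The only points deserving a word of care are verifying that $\{e\}$ is closed (which uses the standing Hausdorff assumption on $G$, so that the hypothesis of Theorem~\ref{th} is legitimately met), identifying $G/\{e\}$ with $G$, and recording that a $q$-space does provide the single $q$-point that Theorem~\ref{th} requires.
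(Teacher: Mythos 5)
Your proposal is correct and matches the paper's own derivation exactly: the paper obtains this corollary from Theorem~\ref{th} (implicitly with $H=\{e\}$) together with the remark that every $M$-space is a $q$-space. Your additional details---that $\{e\}$ is closed by the Hausdorff assumption, that $G/\{e\}\cong G$, and the short argument that quasi-perfect preimages of first-countable spaces are $q$-spaces---are just careful fillings-in of the same one-line deduction.
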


A continuous mapping $f:X\rightarrow Y$ is {\it sequentially perfect} if $f$ is closed such that $f^{-1}(y)$ is sequentially compact for each $y$ in $Y$. A point $x\in X$ is called a {\it strict $q$-point} of a space $X$ if there exists a $q$-sequence $\{U_n:n\in\omega\}$ at $x$ such that $\bigcap_{n\in\omega} U_n$ is a sequentially compact set. We also call $\{U_n:n\in\omega\}$ a {\it strict $q$-sequence}. A space $X$ is said to be a {\it strict $q$-space} if every point of it is a strict $q$-point. It is obvious that any countably compact space which has no non-trivial convergent sequence is a $q$-space but not a strict $q$-space(for example, the Stone-\v{C}ech compactification of $\mathbb{N}$). A continuous mapping $f:X\rightarrow Y$ is called {\it sequentially perfect} if $f$ is closed and $f^{-1}(y)$ is a sequentially compact set for each $y\in Y.$

\begin{theorem}\label{th2}
Suppose that $G$ is a topological group  with a strict $q$-point and $H$ is a closed subgroup of $G$. Then the quotient space $G/H$ is an open and sequentially perfect preimage of a metrizable space.
\end{theorem}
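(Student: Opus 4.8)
The plan is to follow the proof of Theorem \ref{th} almost verbatim, replacing ``countably compact'' by ``sequentially compact'' throughout, and to isolate the single place where the \emph{strict} $q$-point hypothesis is actually used. First, by the homogeneity of $G$ I would assume that the identity $e$ is a strict $q$-point, so there is a $q$-sequence $\{U_n:n\in\omega\}$ at $e$ with $\bigcap_{n\in\omega}U_n$ sequentially compact. As in Theorem \ref{th}, I would construct by induction a sequence $\{V_n:n\in\omega\}$ of symmetric open neighborhoods of $e$ with $V_0\subseteq U_0$, $V_{n+1}^2\subseteq V_n$ and $V_{n+1}\subseteq U_{n+1}$ for each $n$. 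Since $\overline{V_{n+1}}\subseteq V_{n+1}^2\subseteq V_n$, the family $\{V_n\}$ satisfies the hypothesis of Lemma \ref{Lema2}, and because $\{V_n\}$ refines the $q$-sequence $\{U_n\}$ it also satisfies condition (1) there. Hence $K=\bigcap_{n\in\omega}V_n$ is a closed subgroup of $G$, the family $\{V_n\}$ is a countable base at $K$ in $G$, and by Lemma \ref{Lema1} $K$ is strongly neutral with $K\backslash G$ metrizable.

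The one genuinely new step is to upgrade $K$ from countably compact to sequentially compact, and this is where the strict $q$-point hypothesis enters. Because $V_n\subseteq U_n$ for every $n$, we have $K=\bigcap_{n\in\omega}V_n\subseteq\bigcap_{n\in\omega}U_n$, and $K$ is closed in $G$; as a closed subset of the sequentially compact set $\bigcap_{n\in\omega}U_n$, it is itself sequentially compact. With $K$ strongly neutral and $K\backslash G$ metrizable, Proposition \ref{pro3} gives that $K\backslash G/H$ is metrizable, and Proposition \ref{Pro1} gives the continuous, open and closed surjection $f:G/H\to K\backslash G/H$ defined by $f(\pi(x))=q(x)$. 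It then remains to check that $f$ is sequentially perfect, i.e.\ that each fiber is sequentially compact. For $x\in G$ one computes $f^{-1}(q(x))=\pi(q^{-1}q(x))=\pi(KxH)=\pi(Kx)$ exactly as before; since left translation by $x$ is a homeomorphism, $Kx$ is sequentially compact, and therefore $\pi(Kx)$ is sequentially compact, being a continuous image of a sequentially compact set. This shows that $f$ is a closed map with sequentially compact fibers, so $G/H$ is an open and sequentially perfect preimage of the metrizable space $K\backslash G/H$.

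I expect no serious obstacle: the argument is a transfer of Theorem \ref{th} once one records the two elementary facts that closed subsets and continuous images of sequentially compact sets are again sequentially compact, and that translations are homeomorphisms. The only point demanding care is the inductive construction of $\{V_n\}$, where one must simultaneously keep $V_n\subseteq U_n$ (so that $K$ lands inside the sequentially compact set $\bigcap_{n\in\omega}U_n$) and $V_{n+1}^2\subseteq V_n$ (so that $K$ is a closed subgroup admitting $\{V_n\}$ as a base, feeding Lemmas \ref{Lema1} and \ref{Lema2}). Both requirements are met by the usual choice of $V_{n+1}$ inside $V_n\cap U_{n+1}$ with $V_{n+1}^2\subseteq V_n$, so the two uses of the hypothesis are compatible and the proof goes through.
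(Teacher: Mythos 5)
Your proposal is correct and follows essentially the same route as the paper's own proof: the same inductive construction of the symmetric neighborhoods $\{V_n\}$, the same appeal to Lemmas \ref{Lema1} and \ref{Lema2} to get $K=\bigcap_{n\in\omega}V_n$ closed with $\{V_n\}$ a base at $K$ and $K\backslash G$ metrizable, and the same factorization $f:G/H\to K\backslash G/H$ via Propositions \ref{pro3} and \ref{Pro1}, with fibers $\pi(Kx)$ sequentially compact as continuous images of $Kx$. Your one ``new'' step---that $K$ is sequentially compact because it is a closed subset of the sequentially compact set $\bigcap_{n\in\omega}U_n$---is exactly the justification the paper leaves implicit, so there is nothing to add.
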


\begin{proof}
By the homogeneity of $G$, we can assume that the identity $e$ is a strict $q$-point. Then there is a sequence $\{U_n:n\in\omega\}$ of open neighborhoods of $e$ in $G$ satisfies that any sequence $\{x_n:n\in\omega\}$ of points such that $x_n\in U_n$ for each $n\in\omega$ has a cluster point in $G$ and $\bigcap_{n\in \omega}U_n$ is sequentially compact. We define by induction a sequence $\{V_n:n\in\omega\}$ of symmetric open neighborhoods of $e$ in $G$ satisfying the following conditions:
\begin{enumerate}
\item[(1)] $V_0\subseteq  U_0$;
\item[(2)] $V_{n+1}^2\subseteq V_{n}$ and $V_{n+1}\subseteq U_{n+1}$ for each $n\in \omega$.
\end {enumerate}
Since $V_{n+1}^2\subseteq V_{n}$ and $V_n$ is a symmetric open neighborhood of $e$ for each $n\in \omega$, we have $\overline{V_{n+1}}\subseteq V_n$. One can easily show that the sequence $\{V_n:n\in \omega\}$ satisfies the condition (1) in Lemma \ref{Lema2}. Put $K=\bigcap_{n\in \omega}V_n$. Then $K$ is a closed and sequentially compact subgroup in $G$ such that the sequence $\{V_n:n\in \omega\}$ is a countable neighborhood base at $K$ in $G$ by Lemma \ref{Lema2}. Thus, by Lemma \ref{Lema1}, $K$ is a strongly neutral subgroup in $G$ and the quotient space $K\setminus G$ is metrizable and the natural quotient mapping $p:G\rightarrow K\setminus G$ is an open and closed mapping.

Let $\pi:G\rightarrow G/H$ and $q:G\rightarrow K\backslash G/H$ be natural quotient mappings. Define a natural mapping $f:G/H\rightarrow K\backslash G/H$ by $f(\pi(x))=q(x)$ for each $x\in G$.

\[
\xymatrix{ G\ar@{>}[r]^{\pi} \ar@{>}[dr]^{q}
& G/H \ar@{>}[d]^{f}   \\
   & K\backslash G/H
}
\]

Since $K$ is strongly neutral and $K\setminus G$ is metrizable, $K\backslash G/H$ is metrizable by Proposition \ref{pro3}. By Proposition \ref{Pro1}, $f$ is a continuous, open and closed mapping from $G/H$ onto the metrizable space $K\backslash G/H$. To show that $f$ is sequentially perfect it is enough to prove that $f^{-1}(q(x))$ is sequentially compact for each $x\in G$. Indeed, since $f^{-1}(q(x))=\pi(q^{-1}q(x))=\pi(KxH)= \pi(Kx)$, $f^{-1}(q(x))$ is sequentially compact as a continuous image of the sequentially compact set $Kx$. Note that $K$ is sequentially compact.
\end{proof}

\begin{proposition}\label{pp}
Let $f:X\rightarrow Y$ be a sequentially perfect mapping. If $Y$ is first-countable, then $X$ is a strict $q$-space.
\end{proposition}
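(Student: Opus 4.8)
The plan is to verify that every point of $X$ is a strict $q$-point, by lifting a countable neighbourhood base from $Y$ through $f$. Fix $x\in X$ and set $y=f(x)$. Using first-countability of $Y$, I would pick a decreasing countable neighbourhood base $\{W_n:n\in\omega\}$ at $y$ and put $U_n=f^{-1}(W_n)$. Since $f$ is continuous and $y\in W_n$, each $U_n$ is an open neighbourhood of $x$, so $\{U_n:n\in\omega\}$ is the natural candidate for a strict $q$-sequence at $x$; it remains to check the two defining requirements.

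For the $q$-sequence property, take any sequence $\{x_n:n\in\omega\}$ with $x_n\in U_n$ for each $n$. Because $\{W_n\}$ is a decreasing base at $y$ and $f(x_n)\in W_n$, the sequence $\{f(x_n)\}$ converges to $y$ in $Y$. Suppose, towards a contradiction, that $\{x_n\}$ has no cluster point in $X$; writing $F_k=\overline{\{x_n:n\ge k\}}$, this is equivalent to $\bigcap_{k\in\omega}F_k=\emptyset$. Here the closedness of $f$ does the work: each $f(F_k)$ is closed in $Y$ and contains $\{f(x_n):n\ge k\}$, and since the tail of $\{f(x_n)\}$ still converges to $y$, we get $y\in\overline{\{f(x_n):n\ge k\}}\subseteq f(F_k)$. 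Hence $f^{-1}(y)\cap F_k\neq\emptyset$ for every $k$, and the sets $f^{-1}(y)\cap F_k$ form a decreasing sequence of nonempty closed subsets of the fiber $f^{-1}(y)$. As $f^{-1}(y)$ is sequentially compact it is countably compact, so this decreasing sequence has nonempty intersection; any point of that intersection lies in $\bigcap_{k}F_k$, contradicting $\bigcap_k F_k=\emptyset$. Therefore $\{x_n\}$ has a cluster point, and $\{U_n\}$ is a $q$-sequence at $x$.

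It then remains to see that $\bigcap_{n\in\omega}U_n$ is sequentially compact. Here I would use $\bigcap_n U_n=f^{-1}\!\left(\bigcap_n W_n\right)=f^{-1}(y)$, the last equality because $\{W_n\}$ is a neighbourhood base at $y$ and $Y$ is $T_1$, so $\bigcap_n W_n=\{y\}$. Since $f^{-1}(y)$ is sequentially compact by hypothesis, $\{U_n\}$ is in fact a strict $q$-sequence at $x$, and as $x$ was arbitrary, $X$ is a strict $q$-space. I expect the main obstacle to be the $q$-sequence verification in the second step---specifically, transporting the limit point $y$ back into the fiber $f^{-1}(y)$ by using closedness of $f$, and then extracting a cluster point from the intersection of the decreasing closed sets via countable compactness of the fiber; the final sequential-compactness clause is comparatively routine once $\bigcap_n W_n=\{y\}$ is in hand.
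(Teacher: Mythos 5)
Your proof is correct, but the key step takes a genuinely different route from the paper's. The paper verifies the $q$-sequence property for $\{f^{-1}(U_n):n\in\omega\}$ by a two-case analysis: if infinitely many $x_n$ lie in the fiber $\bigcap_{n}f^{-1}(U_n)$, sequential compactness of the fiber directly yields a convergent (hence clustering) subsequence; otherwise it assumes, without loss of generality, that every $x_n$ avoids the fiber, and argues that a sequence with no accumulation point is closed, so $\{f(x_n):n\in\omega\}$ is closed by closedness of $f$, whence $Y\setminus\{f(x_n):n\in\omega\}$ would be an open neighborhood of $f(x)$ containing no $U_n$ --- contradicting that $\{U_n\}$ is a base at $f(x)$. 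You instead give a unified, case-free argument via the tail closures $F_k=\overline{\{x_n:n\ge k\}}$: closedness of $f$ forces $y\in f(F_k)$ for every $k$, and countable compactness of the fiber (a consequence of its sequential compactness) gives a point of $f^{-1}(y)\cap\bigcap_{k}F_k$. This is the standard ``quasi-perfect maps lift cluster points'' argument, and it buys a cleaner division of labor: your $q$-sequence step uses only countably compact fibers (so it in fact proves that a quasi-perfect preimage of a first-countable space is a $q$-space), with sequential compactness reserved solely for the strictness clause $\bigcap_{n}U_n=f^{-1}(y)$; the paper's case split is more elementary but spends sequential compactness where a mere cluster point would do. One shared caveat: the identification $\bigcap_{n}f^{-1}(W_n)=f^{-1}(y)$ requires $\bigcap_{n}W_n=\{y\}$, i.e.\ $T_1$-ness of $Y$, which you invoke explicitly while the paper uses it tacitly (it writes $f^{-1}(f(x))=\bigcap_{n}f^{-1}(U_n)$ without comment); this is harmless given the paper's separation conventions and the metrizable targets in its applications, so it is a point of parity rather than a gap in your argument.
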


\begin{proof}
Take a point $x\in X$ and assume that $\{U_n:n\in \omega\}$ is a base at $f(x)$ in $Y$. Then we claim that $\{f^{-1}(U_n):n\in \omega\}$ is a strict $q$-sequence at $x$, which implies that $X$ is a strict $q$-space.
Since $f$ is closed and continuous, one can easily show that the $\{f^{-1}(U_n):n\in \omega\}$ is a base at $f^{-1}(f(x))$. Also, $f^{-1}(f(x))=\bigcap_{n\in \omega}f^{-1}(U_n)$ is a sequentially compact set, because $f$ is sequentially perfect. Take any point $x_n\in f^{-1}(U_n)$ for each $n\in\omega$. If $\bigcap_{n\in \omega}f^{-1}(U_n)$ contain a subsequence of the sequence $\{x_n\}_{n\in\omega},$ then one can easily show that the sequence $\{x_n\}_{n\in\omega}$ has a convergent subsequence, because $f^{-1}(f(x))=\bigcap_{n\in \omega}f^{-1}(U_n)$ is a sequence compact set. This implies that the sequence $\{x_n\}_{n\in\omega}$ has an accumulation point. If $\bigcap_{n\in \omega}f^{-1}(U_n)$ contain no subsequence of the sequence $\{x_n\}_{n\in\omega},$ without loss of generality, assuming that $x_n\in f^{-1}(U_n)\setminus \bigcap_{n\in \omega}f^{-1}(U_n)$ for each $n\in\omega$, then one can easily show that that the sequence $\{x_n\}_{n\in\omega}$ has an accumulation point. Indeed, if not, then $\{x_n : n\in\omega\}$ is a closed set. $\{f(x_n ): n\in\omega\}$ is also a closed set in $Y$, because $f$ is closed. Then $Y\setminus\{f(x_n ): n\in\omega\}$ is an open set containing $f(x)$. However, there is no element of $\{U_n:n\in\omega\}$ contained in $Y\setminus\{f(x_n ): n\in\omega\}$. This is contradict with the fact that $\{U_n:n\in \omega\}$ is a base at $f(x)$ in $Y$. Thus we have proved that $\{f^{-1}(U_n):n\in \omega\}$ is a strict $q$-sequence.
\end{proof}

From Theorem \ref{th2} and Proposition \ref{pp} it follows that:
\begin{corollary}\cite[Theorem 2.6]{LX}
A topological group $G$ is a strict $q$-space if and only if $G$ is an open and sequentially perfect preimage of a metrizable space.
\end{corollary}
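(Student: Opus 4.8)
The plan is to obtain the two implications of the biconditional directly from the two results just proved, Theorem~\ref{th2} and Proposition~\ref{pp}, with no further machinery.

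For the necessity direction, suppose $G$ is a strict $q$-space. By definition every point of $G$ is a strict $q$-point; in particular the identity $e$ is one, so $G$ carries a strict $q$-point. Since all topological groups here are assumed Hausdorff, the trivial subgroup $\{e\}$ is closed in $G$. I would therefore apply Theorem~\ref{th2} to the pair $(G,\{e\})$: it gives that the quotient space $G/\{e\}$ is an open and sequentially perfect preimage of a metrizable space. Finally, the canonical quotient map $G\to G/\{e\}$ is a continuous open bijection, hence a homeomorphism, so $G$ itself is an open and sequentially perfect preimage of a metrizable space.

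For the sufficiency direction, suppose $f\colon G\to Y$ is an open and sequentially perfect surjection onto a metrizable space $Y$. Every metrizable space is first-countable, so $Y$ is first-countable, and $f$ is sequentially perfect by hypothesis. Proposition~\ref{pp} then applies verbatim and yields that $G$ is a strict $q$-space. Note that openness of $f$ is not needed for this half; only the sequential-perfectness of $f$ and the first-countability of $Y$ enter.

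Since each direction is an immediate specialization or application of an already-established result, there is no substantive obstacle. The only steps meriting a remark are the use of the Hausdorff hypothesis to guarantee that $\{e\}$ is a closed subgroup, and the identification of the quotient $G/\{e\}$ with $G$ that lets Theorem~\ref{th2} be read as a statement about $G$ itself.
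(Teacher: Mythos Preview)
Your proposal is correct and matches the paper's own argument exactly: the paper simply states that the corollary follows from Theorem~\ref{th2} and Proposition~\ref{pp}, and your specialization of Theorem~\ref{th2} to $H=\{e\}$ together with the direct application of Proposition~\ref{pp} is precisely how those results combine.
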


A continuous mapping $f:X\rightarrow Y$ is {\it strongly sequentially perfect} if $f$ is closed such that $f^{-1}(F)$ is sequentially compact for each sequentially compact $F$ in $Y$. A point $x\in X$ is called a {\it strong $q$-point} of a space $X$ if there exists a sequence $\{U_n:n\in\omega\}$ of open neighborhoods
of $x$ in $X$ such that any sequence $\{x_n:n\in\omega\}$ of points in $X$ such that $x_n\in U_n$ for each $n\in\omega$ has a convergent subsequence.
A space $X$ is said to be a {\it strong $q$-space} if every point of it is a strong $q$-point \cite{PY}.

Let $A$ be a subset of a space $X$ and $\gamma=\{U_n:n\in \omega\}$ be a sequence of open neighborhoods containing $A$. Recall that $\gamma$ is a {\it countable neighborhood base at $A$} if for each open set $U$ containing $A$ there is a $U_n\in \gamma$ such that $U_n\subseteq U$. $\gamma$ is called a {\it strongly countable neighborhood base at $A$}, if $\gamma$ is a countable neighborhood base at $A$ such that each sequence $\{x_n:n\in \omega\}$ with $x_n\in U_n$ for each $U_n\in \gamma$ has a convergent subsequence.

\begin{lemma}\label{LL}
Let $f:X\rightarrow Y$ be a continuous and open mapping. Then $f(A)$ has a strongly countable neighborhood base for each subset $A\subseteq X$ with a strongly countable neighborhood base.
\end{lemma}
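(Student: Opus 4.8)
The plan is to take the image sequence as the candidate base. Let $\gamma=\{U_n:n\in\omega\}$ be a strongly countable neighborhood base at $A$ in $X$; since $f$ is open, each $f(U_n)$ is open in $Y$, and since $A\subseteq U_n$ we have $f(A)\subseteq f(U_n)$, so $\{f(U_n):n\in\omega\}$ is a countable family of open neighborhoods of $f(A)$. I will argue that this family is the desired strongly countable neighborhood base at $f(A)$.

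First I would verify that $\{f(U_n):n\in\omega\}$ is a countable neighborhood base at $f(A)$. Given any open $V\subseteq Y$ with $f(A)\subseteq V$, continuity of $f$ makes $f^{-1}(V)$ open in $X$, and $f(A)\subseteq V$ gives $A\subseteq f^{-1}(V)$. Since $\gamma$ is a neighborhood base at $A$, some $U_n\subseteq f^{-1}(V)$, whence $f(U_n)\subseteq f(f^{-1}(V))\subseteq V$. This is the only place where continuity, rather than openness, of $f$ is used.

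The substantive point is the strong property. Take any sequence $\{y_n:n\in\omega\}$ with $y_n\in f(U_n)$ for each $n$; I must produce a convergent subsequence. For each $n$, choose $x_n\in U_n$ with $f(x_n)=y_n$, which is possible precisely because $y_n\in f(U_n)$. The resulting sequence $\{x_n:n\in\omega\}$ satisfies $x_n\in U_n$ for every $n$, so by the defining property of the strongly countable neighborhood base $\gamma$ it has a subsequence $\{x_{n_k}\}$ converging to some $x\in X$. By continuity of $f$, $y_{n_k}=f(x_{n_k})\to f(x)$, giving the required convergent subsequence of $\{y_n\}$.

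No serious obstacle is expected here: the argument is essentially a lift-then-push diagram chase through the mapping $f$. The only point requiring care is that the lifted sequence $\{x_n\}$ genuinely meets the hypothesis of the strong base, namely $x_n\in U_n$ for \emph{every} $n$ rather than only along a subsequence; the index-preserving choice of preimages above arranges this automatically, after which the openness of $f$ (for the base property) and the continuity of $f$ (for transferring convergence) do the rest.
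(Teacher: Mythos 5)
Your proof is correct and takes essentially the same approach as the paper's: lift each $y_n\in f(U_n)$ to some $x_n\in U_n$, extract a convergent subsequence using the strong property of $\{U_n:n\in\omega\}$, and push it forward to $Y$ by continuity of $f$. In fact, you spell out the neighborhood-base verification (choosing $U_n\subseteq f^{-1}(V)$ so that $f(U_n)\subseteq V$) that the paper dismisses with ``one can easily show,'' so your write-up is, if anything, slightly more complete.
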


\begin{proof}
Let $\{W_n:n\in\omega\}$ be a strongly countable neighborhood base of $A$. Then one can easily show that $\{f(W_n):n\in\omega\}$ is a strongly countable open neighborhood base of $f(A)$, because $f$ is a continuous and open mapping. Take a sequence $\{y_n:n\in \omega\}$ with $y_n\in f(W_n)$ for each $n\in\omega$. For each $y_n$, take an $x_n\in W_n$ such that $f(x_n)=y_n$. Then the sequence $\{x_n:n\in \omega\}$ has a convergent subsequence $\{x_{n_i}:i\in \omega\}$, because $\{W_n:n\in\omega\}$ is a strongly countable open neighborhood base of $A$. Then, by the continuity of $f$, one can obtain that $\{f(x_{n_i}):i\in \omega\}$ is convergent subsequence of  $\{y_n:n\in \omega\}$. This finishes the proof.
\end{proof}

\begin{theorem}\label{th3}
Suppose that $G$ is a topological group with a strong $q$-point and $H$ is a closed subgroup of $G$. Then the quotient space $G/H$ is an open and strongly sequentially perfect preimage of a metrizable space.
\end{theorem}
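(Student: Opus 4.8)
The plan is to mirror the proofs of Theorems \ref{th} and \ref{th2}, replacing countable/sequential compactness of the distinguished subgroup by the stronger convergent-subsequence property and then upgrading the fibrewise argument to one about sequentially compact sets. By the homogeneity of $G$ I may assume the identity $e$ is a strong $q$-point, so there is a strong $q$-sequence $\{U_n:n\in\omega\}$ at $e$, i.e. every choice $x_n\in U_n$ gives a sequence with a convergent subsequence. As before I would construct by induction a sequence $\{V_n:n\in\omega\}$ of symmetric open neighborhoods of $e$ with $V_0\subseteq U_0$, $V_{n+1}^2\subseteq V_n$ and $V_{n+1}\subseteq U_{n+1}$, so that $\overline{V_{n+1}}\subseteq V_n$ and $\{V_n\}$ is again a strong $q$-sequence. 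Setting $K=\bigcap_{n\in\omega}V_n$, Lemma \ref{Lema2} shows that $\{V_n\}$ is a countable neighborhood base at $K$, while the convergent-subsequence property simultaneously makes $\{V_n\}$ a \emph{strongly} countable neighborhood base at $K$ and forces $K$ to be a closed, sequentially compact subgroup. Lemma \ref{Lema1} then yields that $K$ is strongly neutral and that $K\backslash G$ is metrizable.

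Next I would introduce the canonical maps $\pi:G\to G/H$ and $q:G\to K\backslash G/H$ together with the induced map $f:G/H\to K\backslash G/H$ given by $f(\pi(x))=q(x)$. Exactly as in Theorems \ref{th} and \ref{th2}, Proposition \ref{pro3} makes $K\backslash G/H$ metrizable and Proposition \ref{Pro1} makes $f$ continuous, open and closed. The whole content then reduces to verifying that $f$ is strongly sequentially perfect, that is, that $f^{-1}(F)$ is sequentially compact whenever $F\subseteq K\backslash G/H$ is sequentially compact; this is the only genuinely new point, and it is the main obstacle.

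For that step my plan is as follows. Since right translation is a homeomorphism of $G$, the strongly countable neighborhood base $\{V_n\}$ at $K$ produces a strongly countable neighborhood base $\{V_n x\}$ at $Kx$ for each $x\in G$; applying Lemma \ref{LL} to the open continuous map $\pi$, the fiber $f^{-1}(q(x))=\pi(Kx)$ inherits a strongly countable neighborhood base $\{W_n:n\in\omega\}$ in $G/H$. Now take any sequence $\{z_m\}$ in $f^{-1}(F)$. As $F$ is sequentially compact, some subsequence of $\{f(z_m)\}$ converges to a point $w=q(x)\in F$. Because $f$ is closed, for each $n$ there is an open $O_n\ni w$ with $f^{-1}(O_n)\subseteq W_n$, and since the images converge to $w$ I may extract a further subsequence $\{z_{m_k}\}$ with $z_{m_k}\in W_k$ for every $k$ by selecting indices successively. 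The defining property of the strongly countable neighborhood base $\{W_n\}$ then supplies a convergent subsequence of $\{z_{m_k}\}$, hence of $\{z_m\}$, which establishes that $f^{-1}(F)$ is sequentially compact.

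I expect the delicate part to be the clean organization of this last extraction: transferring the strong base from $Kx$ to the fiber $\pi(Kx)$ via Lemma \ref{LL}, using closedness of $f$ to turn the open sets $O_n$ about $w$ into neighborhoods of the whole fiber, and then choosing one term of the subsequence from each $W_k$ so that the hypothesis on a strongly countable neighborhood base can be invoked. Everything else is a routine repetition of the constructions already performed for Theorems \ref{th} and \ref{th2}.
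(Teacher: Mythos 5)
Your overall route coincides with the paper's: the same inductive construction of the symmetric neighborhoods $V_n$, the same subgroup $K=\bigcap_{n\in\omega}V_n$ (closed, sequentially compact, with $\{V_n\}$ a strongly countable neighborhood base, strongly neutral by Lemmas \ref{Lema2} and \ref{Lema1}), the same maps $\pi$, $q$, $f$ with $K\backslash G/H$ metrizable by Proposition \ref{pro3} and $f$ continuous, open and closed by Proposition \ref{Pro1}, and the same use of Lemma \ref{LL} to transfer the strong base from $Kx$ to the fiber $\pi(Kx)$. Your extraction of a subsequence $z_{m_k}\in W_k$ via closedness of $f$ (choosing open $O_n\ni w$ with $f^{-1}(O_n)\subseteq W_n$) is a legitimate variant of the paper's maneuver, which instead saturates the $W_n$ so that $f^{-1}(f(W_n))=W_n$ and uses openness of $f$; the two are essentially equivalent.

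There is, however, one genuine gap at the conclusion: you declare $f^{-1}(F)$ sequentially compact as soon as $\{z_{m_k}\}$ has a subsequence converging \emph{in} $G/H$. Sequential compactness of the \emph{set} $f^{-1}(F)$ requires the limit of that subsequence to lie in $f^{-1}(F)$, and nothing in your argument addresses where the limit lands. This is not a pedantic point: the paper devotes the entire final portion of its proof to it, using the closedness of $H$ (so that $G/H$ is Hausdorff) to establish $f^{-1}(z_0)=\bigcap_{n\in\omega}W_n$ and then arguing by contradiction that the limit $x_0$ must lie in the fiber, since otherwise $(G/H)\setminus(\{x_{n_k}:k\in\omega\}\cup\{x_0\})$ would be an open neighborhood of $f^{-1}(z_0)$ containing no $W_n$, contradicting the base property. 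Fortunately, your setup admits an even shorter repair than the paper's: if $z_{m_{k_j}}\to x_0$ in $G/H$, then by continuity $f(z_{m_{k_j}})\to f(x_0)$, while also $f(z_{m_{k_j}})\to w$; since $K\backslash G/H$ is metrizable, hence Hausdorff, limits are unique, so $f(x_0)=w\in F$ and thus $x_0\in f^{-1}(F)$. With that one sentence added, your proof is complete and is essentially the paper's.
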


\begin{proof}
By the homogeneity of $G$, we can assume that the identity $e$ is a strong $q$-point. Then there is a sequence $\{U_n:n\in\omega\}$ of open neighborhoods of $e$ in $G$ satisfies that any sequence $\{x_n:n\in\omega\}$ of points such that $x_n\in U_n$ for each $n\in\omega$ has a convergent subsequence. We define by induction a sequence $\{V_n:n\in\omega\}$ of symmetric open neighborhoods of $e$ in $G$ satisfying the following conditions:
\begin{enumerate}
\item[(1)] $V_0\subseteq  U_0$;
\item[(2)] $V_{n+1}^2\subseteq V_{n}$ and $V_{n+1}\subseteq U_{n+1}$ for each $n\in \omega$.
\end {enumerate}
Since $V_{n+1}^2\subseteq V_{n}$ and $V_n$ is a symmetric open neighborhood of $e$ for each $n\in \omega$, we have $\overline{V_{n+1}}\subseteq V_n$. One can easily show that the sequence $\{V_n:n\in \omega\}$ satisfies the condition (1) in Lemma \ref{Lema2}. Put $K=\bigcap_{n\in \omega}V_n$. Then $K$ is a closed sequentially compact subgroup in $G$ such that the sequence $\gamma=\{V_n:n\in \omega\}$ is a countable neighborhood base at $K$ in $G$ by Lemma \ref{Lema2}. Particularly, $\gamma$ is a strongly countable neighborhood base at $K$ in $G$. Thus, by Lemma \ref{Lema1}, $K$ is a strongly neutral subgroup in $G$ and the quotient space $K\setminus G$ is metrizable.

Let $\pi:G\rightarrow G/H$ and $q:G\rightarrow K\backslash G/H$ be natural quotient mappings. Define a natural mapping $f:G/H\rightarrow K\backslash G/H$ by $f(\pi(x))=q(x)$ for each $x\in G$.

\[
\xymatrix{ G\ar@{>}[r]^{\pi} \ar@{>}[dr]^{q}
& G/H \ar@{>}[d]^{f}   \\
   & K\backslash G/H
}
\]

Since $K$ is strongly neutral and $K\setminus G$ is metrizable, $K\backslash G/H$ is metrizable by Proposition \ref{pro3}. By Proposition \ref{Pro1}, $f$ is a continuous, open and closed mapping from $G/H$ onto the metrizable space $K\backslash G/H$.

To show that $f$ is strongly sequentially perfect it is enough to prove that $f^{-1}(F)$ is sequentially compact for each sequentially compact set $F\subseteq K\backslash G/H$. Indeed, take arbitrary sequence $\{x_n:n\in \omega\}\subseteq f^{-1}(F)$. Then there is a subsequence of $\{f(x_n):n\in \omega\}$ converging to some point $z_0\in F$, because the sequence $\{f(x_n):n\in \omega\}$ is contained in the sequentially compact set $F$. Without loss of generality, we assume that $\{f(x_n):n\in \omega\}$ converges to the point $z_0$. Take a $y\in G$ such that $q(y)=z_0$. Clearly, $Ky$ has a strongly countable neighborhood base, because $K$ has a strongly countable open neighborhoods base. Since $\pi$ is the natural quotient mapping, $\pi$ is continuous and open. By Lemma \ref{LL},  $f^{-1}(z_0)=\pi(q^{-1}(q(y)))=\pi(KyH)=\pi(Ky)$ as a continuous open image of the set $Ky$ has a strongly countable neighborhood base $\{W_n:n\in \omega\}$. Since $f$ is closed and $f^{-1}(z_0)\subseteq W_n$ for each $n\in \omega$, we can assume that $f^{-1}(f(W_n))=W_n$ for each $n\in \omega$. Then we claim that $W_i$ contains infinite elements of the sequence $\{x_n:n\in \omega\}$ for each $i\in \omega$. Indeed, if not, there is an $i_0\in \omega$ such that $f(W_{i_0})$ contains finite elements of the sequence $\{f(x_n):n\in \omega\}$. This contradicts with the sequence $\{f(x_n):n\in \omega\}$ converging to the point $z_0$, because $f(W_{i_0})$ as an open image of the open set $W_{i_0}$ is an open neighborhood of $z_0$. Thus one can take a subsequence $\{x_{n_i}:i\in \omega\}$ of the sequence $\{x_n:n\in \omega\}$ such that $x_{n_i}\in W_i$ for each $i\in \omega$. Since $\{W_n:n\in \omega\}$ is a strongly countable neighborhood base at $f^{-1}(z_0)$, the sequence $\{x_{n_i}:i\in \omega\}$ has a convergent subsequence. This implies that the sequence $\{x_n:n\in \omega\}$ contains a convergent subsequence, because $\{x_{n_i}:i\in \omega\}$ is a subsequence of $\{x_n:n\in \omega\}$. Let $\{x_{n_k}:k\in \omega\}$ be a subsequence of $\{x_n:n\in \omega\}$ such that it converges to a point $x_0\in G/H$ and $x_{n_k}\in W_k$ for each $k\in \omega$. Since $H$ is closed, the quotient space $G/H$ is a Hausdorff space. Thus one can easily show that $f^{-1}(z_0)=\bigcap_{n\in\omega}W_n$, because the sequence $\{W_n:n\in \omega\}$ is a neighborhood base at $f^{-1}(z_0)$. Thus we claim that $x_0\in f^{-1}(z_0)\subseteq f^{-1}(F)$. If not, the set $f^{-1}(z_0)$ contains finite elements of the sequence $\{x_{n_k}:k\in \omega\}$. Without loss generality, we assume that $\{x_{n_k}:k\in \omega\}\cap f^{-1}(z_0) =\emptyset$. Then $(G/H)\setminus (\{x_{n_k}:k\in \omega\}\cup\{x_0\})$ is an open neighborhood of $f^{-1}(z_0)$ such that contains no $W_n$ for each $n\in \omega$. This is a contradiction with $\{W_n:n\in \omega\}$ being a neighborhood base at $f^{-1}(z_0)$. We have proved that $f^{-1}(F)$ is sequentially compact for each sequentially compact set $F\subseteq K\backslash G/H$.
\end{proof}

\begin{proposition}\label{LLL}
Let $f:X\rightarrow Y$ be an open and strongly sequentially perfect mapping. Then $X$ is a strong $q$-space if and only if so is $Y$.
\end{proposition}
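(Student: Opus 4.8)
The plan is to prove the two implications separately: the direction \textit{$X$ a strong $q$-space $\Rightarrow$ $Y$ a strong $q$-space} will use only that $f$ is continuous, open and onto, while the converse is where the strongly sequentially perfect hypothesis is essential. I take $f$ to be onto throughout, which is the standing convention for preimage maps and is genuinely needed here: otherwise the empty map into a non-strong-$q$-space such as $\beta\mathbb{N}$ (which is a $q$-space but not a strong $q$-space) would be an open, strongly sequentially perfect map from a strong $q$-space onto a space that is not one.

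For the forward direction, I would fix $y\in Y$ and pick $x\in f^{-1}(y)$. Let $\{U_n:n\in\omega\}$ be a strong $q$-sequence at $x$. Since $f$ is open and $x\in U_n$, each $f(U_n)$ is an open neighbourhood of $y$, and I claim $\{f(U_n):n\in\omega\}$ is a strong $q$-sequence at $y$. Given a selection $y_n\in f(U_n)$, I would lift it to $x_n\in U_n$ with $f(x_n)=y_n$, apply the strong $q$-sequence property at $x$ to extract a convergent subsequence $x_{n_i}\to x_0$, and then use continuity to conclude $y_{n_i}=f(x_{n_i})\to f(x_0)$. This mirrors the lifting argument in the proof of Lemma \ref{LL}, the only difference being that here $\{U_n\}$ is a strong $q$-sequence rather than a strongly countable neighbourhood base.

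For the converse, I would fix $x\in X$, put $y=f(x)$, and take a strong $q$-sequence $\{U_n:n\in\omega\}$ at $y$. By continuity each $f^{-1}(U_n)$ is an open neighbourhood of $x$, and I would show $\{f^{-1}(U_n):n\in\omega\}$ is a strong $q$-sequence at $x$. Given a selection $x_n\in f^{-1}(U_n)$, we have $f(x_n)\in U_n$, so $\{f(x_n)\}$ has a subsequence $f(x_{n_i})\to z$ for some $z\in Y$. The key move is to set $F=\{f(x_{n_i}):i\in\omega\}\cup\{z\}$; this is a convergent sequence together with its limit, hence sequentially compact, so $f^{-1}(F)$ is sequentially compact because $f$ is strongly sequentially perfect. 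Since $\{x_{n_i}\}\subseteq f^{-1}(F)$, this subsequence, and therefore $\{x_n\}$ itself, has a convergent subsequence in $X$.

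The routine checks are that the pushed-forward and pulled-back families really are open neighbourhoods of the relevant points (immediate from openness and continuity together with the membership $y\in U_n$, resp. $x\in U_n$) and that a convergent sequence with its limit is sequentially compact (any subsequence either stabilises at a repeated value or is a subsequence of the convergent one). The main obstacle is the converse direction: one must manufacture, out of a convergent image-subsequence, a sequentially compact set in $Y$ whose preimage traps the lifted sequence, so that the strongly sequentially perfect hypothesis can be invoked to produce a convergent subsequence upstairs. Everything else reduces to the pushforward/pullback bookkeeping.
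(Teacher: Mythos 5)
Your proof is correct and follows essentially the same route as the paper's: the forward direction pushes a strong $q$-sequence through the open continuous map and lifts selections, and the converse pulls back a strong $q$-sequence at $f(x)$, extracts a convergent image subsequence, and applies the strongly sequentially perfect hypothesis to the sequentially compact set $F=\{f(x_{n_i}):i\in\omega\}\cup\{z\}$, exactly as in the paper. Your explicit remarks on surjectivity and on why a convergent sequence together with its limit is sequentially compact are details the paper leaves implicit, but they do not change the argument.
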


\begin{proof}
Assume that $X$ is  a strong $q$-space. Take a point $x\in X$ and a strong $q$-sequence $\{U_n:n\in \omega\}$ at $x$. Then one can easily show that $\{f(U_n):n\in \omega\}$ is a strong $q$-sequence at $f(x)$, because $f$ is continuous and open. This shows that $Y$ is a strong $q$-space.

Conversely, take a point $x\in X$ and a strong $q$-sequence $\{U_n:n\in \omega\}$ at $f(x)$. We claim that the family $\{f^{-1}(U_n):n\in \omega\}$ is a strong $q$-sequence at $x$, which implies that $X$ is a strong $q$-space.
Indeed, take $x_n\in f^{-1}(U_n)$ for each $n\in \omega$. Then $f(x_n)\in U_n$ for each $n\in \omega$. Since $\{U_n:n\in \omega\}$ is a strong $q$-sequence at $f(x)$, $\{f(x_{n})\}_{n\in\omega}$ has a convergent subsequence $\{f(x_{n_i})\}_{i\in\omega}$. Put $F=\{f(x_{n_i}):i\in \omega\}\cup \{y_0\}$, where $y_0$ is a convergent point of the sequence $\{f(x_{n_i})\}_{i\in\omega}$. Then $f^{-1}(F)$ is a sequentially compact set in $X$, because $f$ is a strongly sequentially perfect mapping. Clearly, the subsequence $\{x_{n_i}\}_{i\in\omega}$ of the sequence $\{x_{n}\}_{n\in\omega}$ is contained in $f^{-1}(F)$, so $\{x_{n_i}\}_{i\in\omega}$ has a convergent subsequence, which is also a convergent subsequence of the sequence $\{x_{n}\}_{n\in\omega}$. This shows that the family $\{f^{-1}(U_n):n\in \omega\}$ is a strong $q$-sequence at $x$.
\end{proof}

From Theorem \ref{th3} and Proposition \ref{LLL} it follows that:

\begin{corollary}\cite[Theorem 2.5]{LX}
A topological group $G$ is a strong $q$-space if and only if $G$ is an open and strongly sequentially perfect preimage of a metrizable.
\end{corollary}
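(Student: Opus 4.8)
The plan is to follow the same template as the proofs of Theorems \ref{th} and \ref{th2}, the only genuinely new ingredient being the verification that the induced map is \emph{strongly} sequentially perfect rather than merely quasi-perfect or sequentially perfect. First, by the homogeneity of $G$ I may assume the identity $e$ is a strong $q$-point, so there is a sequence $\{U_n:n\in\omega\}$ of open neighborhoods of $e$ witnessing this. I would then build inductively a sequence $\{V_n:n\in\omega\}$ of symmetric open neighborhoods of $e$ with $V_0\subseteq U_0$, $V_{n+1}^2\subseteq V_n$, and $V_{n+1}\subseteq U_{n+1}$; the relation $V_{n+1}^2\subseteq V_n$ forces $\overline{V_{n+1}}\subseteq V_n$, so $\bigcap_n V_n=\bigcap_n \overline{V_n}$ and condition (1) of Lemma \ref{Lema2} is met. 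Setting $K=\bigcap_n V_n$, Lemma \ref{Lema2} gives that $K$ is a closed subgroup and $\gamma=\{V_n\}$ is a countable neighborhood base at $K$; because the $V_n$ refine the strong $q$-sequence $\{U_n\}$, every choice sequence through the $V_n$ has a convergent subsequence, so $K$ is sequentially compact and, crucially, $\gamma$ is in fact a \emph{strongly} countable neighborhood base at $K$ in the sense defined before Lemma \ref{LL}.

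With $K$ in hand, Lemma \ref{Lema1} makes $K$ strongly neutral and $K\backslash G$ metrizable. I then introduce the natural quotient maps $\pi:G\to G/H$ and $q:G\to K\backslash G/H$ and define $f:G/H\to K\backslash G/H$ by $f(\pi(x))=q(x)$, which fits into the commutative triangle $q=f\circ\pi$. Proposition \ref{pro3} yields that the target $K\backslash G/H$ is metrizable, and Proposition \ref{Pro1} yields that $f$ is continuous, open, and closed. It therefore remains only to show $f$ is strongly sequentially perfect, i.e. that $f^{-1}(F)$ is sequentially compact for every sequentially compact $F\subseteq K\backslash G/H$.

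To prove this, I take an arbitrary sequence $\{x_n\}\subseteq f^{-1}(F)$ and push it forward: $\{f(x_n)\}\subseteq F$, so after passing to a subsequence I may assume $f(x_n)\to z_0$ for some $z_0\in F$. Choosing $y\in G$ with $q(y)=z_0$, I note that $Ky$ inherits a strongly countable neighborhood base from $K$, and since $\pi$ is continuous and open, Lemma \ref{LL} transfers this to $f^{-1}(z_0)=\pi(Ky)$, endowing it with a strongly countable neighborhood base $\{W_n\}$; using that $f$ is closed I may normalize so that $f^{-1}(f(W_n))=W_n$. The delicate step is then to argue, using that each $f(W_i)$ is an open neighborhood of $z_0$ together with $f(x_n)\to z_0$, that every $W_i$ contains infinitely many terms of $\{x_n\}$, which lets me extract a subsequence $\{x_{n_i}\}$ with $x_{n_i}\in W_i$; the strongly-countable-base property of $\{W_n\}$ then delivers a convergent subsequence, hence a convergent subsequence of the original $\{x_n\}$. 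The main obstacle will be this combinatorial extraction together with confirming that the resulting limit $x_0$ lies in $f^{-1}(z_0)\subseteq f^{-1}(F)$, so that sequential compactness holds in the strict sense. I expect to close this either directly from continuity of $f$ and the Hausdorffness of $G/H$ (which forces $f(x_0)=z_0$), or, mimicking the neighborhood-base reasoning of the earlier proofs, by observing that if $x_0\notin f^{-1}(z_0)=\bigcap_n W_n$ then $(G/H)\setminus(\{x_{n_k}\}\cup\{x_0\})$ would be an open neighborhood of the fiber containing no $W_n$, contradicting that $\{W_n\}$ is a base at $f^{-1}(z_0)$.
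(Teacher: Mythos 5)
Your write-up is, in substance, a correct reconstruction of the paper's proof of Theorem \ref{th3}: you build $K=\bigcap_{n\in\omega}V_n$ from a strong $q$-sequence at $e$, observe that $\{V_n:n\in\omega\}$ is a \emph{strongly} countable neighborhood base at the closed sequentially compact subgroup $K$, invoke Lemma \ref{Lema1} and Proposition \ref{pro3} for metrizability of $K\backslash G/H$, Proposition \ref{Pro1} for $f$ being continuous, open and closed, and then verify strong sequential perfectness by the same extraction the paper uses (each $W_i$ meets the tail of $\{x_n\}$ because $f(W_i)$ is an open neighborhood of $z_0$ and $f^{-1}(f(W_i))=W_i$, then the strongly-countable-base property of $\{W_n\}$ yields a convergent subsequence whose limit must lie in $f^{-1}(z_0)=\bigcap_n W_n$). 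Specialized to $H=\{e\}$ (which you should state, since the $H$ in your argument plays no role in the corollary), this gives the ``only if'' direction, and it matches the paper's route exactly.

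The genuine gap is that the statement is a biconditional and you never address the converse: if $G$ admits an open strongly sequentially perfect mapping $f$ onto a metrizable space $M$, why is $G$ a strong $q$-space? The paper handles this with Proposition \ref{LLL}, and the needed argument, while short, is not automatic. Given $x\in G$, take a decreasing countable base $\{U_n:n\in\omega\}$ at $f(x)$ in $M$ and claim $\{f^{-1}(U_n):n\in\omega\}$ is a strong $q$-sequence at $x$: if $x_n\in f^{-1}(U_n)$ for each $n$, then $f(x_n)\to f(x)$, so the set $F=\{f(x_n):n\in\omega\}\cup\{f(x)\}$ is sequentially compact in $M$; strong sequential perfectness makes $f^{-1}(F)$ sequentially compact, and since $\{x_n:n\in\omega\}\subseteq f^{-1}(F)$ the sequence has a convergent subsequence. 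This pullback step is precisely where the \emph{strongly} sequentially perfect hypothesis (as opposed to quasi-perfect or sequentially perfect) is indispensable --- sequential compactness of the single fiber $f^{-1}(f(x))$ would not by itself control a sequence spread over infinitely many fibers. Without this half, your proposal proves Theorem \ref{th3} but not the corollary as stated.
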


\begin{proposition}\label{pro4.1}
Let $(X, d)$ be a pseudometric space. Then the natural mapping $p:X\rightarrow X/d$ is open and satisfies that $p^{-1}(F)$ is $r$-pseudocompact in $X$ for each $r$-pseudocompact set $F$ in $X/d$, where the topology on $X$ is induced by $d$ and $X/d$ is the equivalent classes by $d$, which is endowed with the quotient topology.
\end{proposition}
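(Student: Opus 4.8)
The plan is to use two features of the identification map: that $p$ is essentially an isometry onto the quotient metric space, and that continuous real-valued functions on $X$ descend to $X/d$. I would first record that $x\sim y\iff d(x,y)=0$ is an equivalence relation and that $\rho(p(x),p(y))=d(x,y)$ is a well-defined metric on $X/d$ inducing exactly the quotient topology; well-definedness is immediate from $|d(x,y)-d(x',y')|\le d(x,x')+d(y,y')$, and the two topologies agree because $p$ is continuous and open for $\rho$. For the openness of $p$ it then suffices to treat basic open sets: a direct computation gives $p(B_d(x,\varepsilon))=B_\rho(p(x),\varepsilon)$ for all $x\in X$ and $\varepsilon>0$, since $\rho(p(x),p(z))=d(x,z)$. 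As every open subset of $X$ is a union of $d$-balls, its image is a union of $\rho$-balls, hence open.

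The heart of the argument is a factorization lemma: \emph{every} continuous $g:X\to\mathbb{R}$ is constant on the fibers of $p$, so that $g=\tilde g\circ p$ for some $\tilde g:X/d\to\mathbb{R}$. To prove fiberwise constancy, note that $d(x,y)=0$ makes $x$ and $y$ topologically indistinguishable, because any $d$-ball about one point contains the other; since $\mathbb{R}$ is Hausdorff, no continuous map can separate indistinguishable points, so $g(x)=g(y)$. Thus $\tilde g$ is well defined, and because $p$ is a quotient map the continuity of $g=\tilde g\circ p$ forces $\tilde g$ to be continuous.

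With the lemma in hand the transfer of $r$-pseudocompactness is formal. Working with the functional form of the notion (a set $A$ is $r$-pseudocompact in a space $Z$ precisely when every continuous $Z\to\mathbb{R}$ is bounded on $A$), let $F\subseteq X/d$ be $r$-pseudocompact in $X/d$ and let $g:X\to\mathbb{R}$ be arbitrary continuous, written $g=\tilde g\circ p$. Since $p$ is surjective, $p(p^{-1}(F))=F$, whence $g(p^{-1}(F))=\tilde g(F)$. The set $\tilde g(F)$ is bounded because $\tilde g$ is continuous on $X/d$ and $F$ is $r$-pseudocompact there; hence $g$ is bounded on $p^{-1}(F)$. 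As $g$ was arbitrary, $p^{-1}(F)$ is $r$-pseudocompact in $X$.

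I expect the factorization lemma to be the only real obstacle, namely turning $d(x,y)=0$ into genuine topological indistinguishability and then into $g(x)=g(y)$, together with the quotient-map continuity of $\tilde g$; the openness and the boundedness transfer are routine once this is established. One caveat I would watch is the exact definition of $r$-pseudocompactness adopted in the paper: should it be given through locally finite families of open sets rather than through boundedness of real functions, I would instead push such families forward along the open continuous surjection $p$, using that $p^{-1}(F)$ meets an open set $U$ exactly when $F$ meets the open set $p(U)$.
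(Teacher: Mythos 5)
A preliminary remark: the paper's own proof environment for this proposition is left empty (the authors evidently considered it routine), so there is nothing of theirs to compare against; your argument must stand on its own. Your treatment of openness via $p(B_d(x,\varepsilon))=B_\rho(p(x),\varepsilon)$ and your factorization lemma are both correct. The genuine gap lies in the ``functional form'' you adopt for the main step: the property that every continuous $g:Z\to\mathbb{R}$ is bounded on $A$ is the definition of a \emph{bounded} (functionally bounded) subset, not of $r$-pseudocompactness as this paper defines it. The paper follows Hern\'{a}ndez--Sanchis--Tkachenko: $A$ is $r$-pseudocompact in $Z$ if every infinite family of open sets of $Z$, each meeting $A$, has an accumulation point \emph{belonging to} $A$. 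The paper itself treats the two notions as distinct, invoking \cite[Proposition 2.1]{HST} precisely to pass from $r$-pseudocompact to bounded. They differ even in metric spaces, which is where your argument runs on the $X/d$ side: $A=(0,1)$ in $\mathbb{R}$ is functionally bounded, yet the family $\{(1/(n+1),1/n):n\geq 1\}$ consists of open sets meeting $A$ whose only accumulation point is $0\notin A$, so $A$ is not $r$-pseudocompact. Consequently your main argument proves a strictly weaker transfer statement (boundedness rather than $r$-pseudocompactness), and since this proposition is consumed by Theorem 4.11 of the paper, which uses the open-family definition directly, the functional route cannot be salvaged by redefinition.

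The caveat you append is in fact the correct proof, but as stated it is missing its crucial step. The equivalence you cite, that $p^{-1}(F)$ meets an open $U$ exactly when $F$ meets $p(U)$, holds for any surjection and only serves to push the family forward; the real issue is lifting the accumulation point back, and that is where the pseudometric enters. Your own indistinguishability observation gives the key fact: every open $U\subseteq X$ is saturated, i.e.\ $p^{-1}(p(U))=U$, since $d(u,v)=0$ and $v\in U$ open force $u\in U$. Now let $\{U_n:n\in\omega\}$ be an infinite family of open sets each meeting $p^{-1}(F)$. Each $p(U_n)$ is open ($p$ is open) and meets $F$, so $r$-pseudocompactness of $F$ in $X/d$ yields an accumulation point $z_0\in F$ of $\{p(U_n):n\in\omega\}$. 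Pick any $x_0\in p^{-1}(z_0)\subseteq p^{-1}(F)$. For every open $V\ni x_0$, the set $p(V)$ is an open neighborhood of $z_0$, hence $p(V)\cap p(U_n)\neq\emptyset$ for infinitely many $n$; choosing $v\in V$, $u\in U_n$ with $p(v)=p(u)$, saturation of $V$ gives $u\in p^{-1}(p(V))=V$, so $V\cap U_n\neq\emptyset$ for those $n$. Thus $x_0$ is an accumulation point of $\{U_n:n\in\omega\}$ lying in $p^{-1}(F)$, which is exactly the required conclusion. With this paragraph substituted for the functional-form argument, your write-up becomes a complete and correct proof.
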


\begin{proof}

\end{proof}

A point $x\in X$ is called {\it pseudocompacness point} of $X$ if there exists a sequence $\{U_n:n\in \omega\}$ of open neighborhoods of $x$ in $X$ satisfying the condition: (pp) every sequence $\{V_n:n\in \omega\}$ of non-empty open sets in $X$ such that $V_n\subseteq U_n$ for each $n\in \omega$ has a point of accumulation in $X$. A space is said to be {\it pointwise pseudocompact} \cite{AT1} if each point of $X$ is a pseudocompacness point.

Recall that a subset $A$ of $X$ is said to be a {\it relative pseudocompact}, for brevity, {\it $r$-pseudocompact} \cite{HST} in $X$ if every infinite family $\gamma$ of open sets in $X$ such that $U\cap A\neq\emptyset$, for each $U\in \gamma$, has an accumulation point in $A$.

\begin{theorem}\label{th4}
Suppose that $G$ is a topological group  with a pseudocompact point and $H$ is a closed subgroup of $G$. Then there is a metrizable space $M$ and a continuous open mapping $f$ from $G/H$ onto $M$ such that $f^{-1}(F)$ is $r$-pseudocompact in $G/H$ for each $r$-pseudocompact set $F$ in $M$.
\end{theorem}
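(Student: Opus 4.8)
The plan is to follow the template of Theorems~\ref{th}, \ref{th2} and \ref{th3}, replacing the countable/sequential compactness of the auxiliary subgroup by $r$-pseudocompactness. By homogeneity I may assume the identity $e$ is a pseudocompactness point, with a witnessing sequence $\{U_n:n\in\omega\}$ of open neighbourhoods of $e$ satisfying (pp). Exactly as before I would construct by induction a sequence $\{V_n:n\in\omega\}$ of symmetric open neighbourhoods of $e$ with $V_0\subseteq U_0$, $V_{n+1}^2\subseteq V_n$ and $V_{n+1}\subseteq U_{n+1}$, so that $\overline{V_{n+1}}\subseteq V_n$, and set $K=\bigcap_{n\in\omega}V_n$. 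Then $K$ is a closed subgroup of $G$ and $K=\bigcap_{n\in\omega}\overline{V_n}$.

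The decisive step is a pseudocompactness analogue of Lemma~\ref{Lema2}: I would prove that $\{V_n:n\in\omega\}$ is a neighbourhood base at $K$ and that $K$ is $r$-pseudocompact in $G$. For the base property, suppose $O$ is open with $K\subseteq O$ but $V_n\not\subseteq O$ for every $n$. Using regularity of $G$ together with the nesting $\overline{V_{n+1}}\subseteq V_n$, I would choose for each $n$ a nonempty open set $W_n$ with $\overline{W_n}\subseteq V_n\cap(G\setminus\overline O)\subseteq U_n$; applying (pp) to $\{W_n:n\in\omega\}$ yields an accumulation point $p$, and since $W_n\subseteq V_m$ for $n\ge m$ this forces $p\in\bigcap_m\overline{V_m}=K\subseteq O$, whereas $W_n\subseteq G\setminus\overline O$ forces $p\in\overline{G\setminus\overline O}\subseteq G\setminus O$ — a contradiction. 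The $r$-pseudocompactness of $K$ is established in the same spirit, testing a given family of open sets meeting $K$ against a subsequence of the $V_n$. Granting this, Lemma~\ref{Lema1} applies: $K$ is strongly neutral and $K\backslash G$ is metrizable.

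I would then set $M=K\backslash G/H$ and define $f\colon G/H\to M$ by $f(\pi(x))=q(x)$, where $\pi\colon G\to G/H$ and $q\colon G\to K\backslash G/H$ are the canonical maps, just as in the earlier proofs. Since $K$ is strongly neutral, $H$ is closed and $K\backslash G$ is metrizable, Proposition~\ref{pro3} gives that $M$ is metrizable, and Proposition~\ref{Pro1} gives that $f$ is continuous, open and closed. Moreover each fibre $f^{-1}(q(x))=\pi(KxH)=\pi(Kx)$ is $r$-pseudocompact, being the image of the $r$-pseudocompact set $Kx$ under the continuous open map $\pi$.

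Finally, for the preimage statement I would imitate the proof of Theorem~\ref{th3}, using $r$-pseudocompactness in place of sequential compactness. First I would record an analogue of Lemma~\ref{LL}: a continuous open map sends a set carrying a countable neighbourhood base with the (pp)-accumulation property to a set with such a base; hence each fibre $f^{-1}(z)=\pi(Ky)$ inherits from $K$ a countable neighbourhood base $\{W_m:m\in\omega\}$ along which every sequence of nonempty open sets accumulates, and by closedness of $f$ I may take the $W_m$ saturated. Given an infinite family $\gamma$ of open sets in $G/H$ each meeting $f^{-1}(F)$, the images $\{f(U):U\in\gamma\}$ are open and meet the $r$-pseudocompact set $F$, so they accumulate at some $z_0\in F$; localising at $f^{-1}(z_0)$, I would pick $U_m\in\gamma$ with $\emptyset\neq U_m\cap W_m$ (possible because $f(W_m)$ is an open neighbourhood of $z_0$ meeting infinitely many $f(U)$, and $W_m$ is saturated) and apply the fibre's base property to $\{U_m\cap W_m\}$, producing an accumulation point $p\in f^{-1}(z_0)\subseteq f^{-1}(F)$, which is then an accumulation point of $\gamma$; here Proposition~\ref{pro4.1} supplies the matching behaviour on the metric base $M$. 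The hard part will be the second paragraph: the open-set formulation of (pp) only separates from the \emph{interior} of $G\setminus O$, so the genuine obstacle is to justify the choice of $W_n$ — that is, to pass from $V_n\not\subseteq O$ to $V_n\not\subseteq\overline O$ — which is exactly where regularity and the nesting $\overline{V_{n+1}}\subseteq V_n$ must be exploited with care.
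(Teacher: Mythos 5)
There is a genuine gap, and you have in fact located it yourself: your whole plan funnels through the claimed ``pseudocompactness analogue of Lemma~\ref{Lema2},'' namely that $\{V_n:n\in\omega\}$ is a neighbourhood base at $K$, which you then feed into Lemma~\ref{Lema1} to get $K$ strongly neutral and $K\backslash G$ metrizable, and later into the fibre-base argument imitating Theorem~\ref{th3}. But the proposed proof of the base property does not work, and not merely for lack of care: from $V_n\not\subseteq O$ you cannot produce a nonempty open $W_n$ with $\overline{W_n}\subseteq V_n\cap(G\setminus\overline{O})$, since $O$ may well be a \emph{dense} open set containing $K$ (take $O$ the complement of a closed nowhere dense set disjoint from $K$), in which case $V_n\setminus\overline{O}=\emptyset$ for every $n$ and the condition (pp) gets no purchase at all. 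The device that rescues this step for $q$-points --- picking points $x_n\in V_n\setminus O$ and using their cluster point --- is unavailable here, because (pp) controls sequences of \emph{open sets}, not of points; a pseudocompactness point need not be a $q$-point. So the strong neutrality of $K$, the applicability of Lemma~\ref{Lema1}, and the strongly-(pp) base at the fibres $f^{-1}(z)$ are all unestablished, and your argument collapses at its first load-bearing step. (Your easier half --- that $K$ is a closed $r$-pseudocompact subgroup, via nonempty open sets $O_i\cap V_i\subseteq U_i$ and $\bigcap_m\overline{V_m}=K$ --- is fine, and so is the observation that continuous images preserve relative $r$-pseudocompactness.)

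The paper's proof is built precisely to avoid this trap: it never claims $\{V_n\}$ is a base at $K$, nor that $K$ is strongly neutral. Instead it (i) quotes the proof of \cite[Theorem 2.9]{LX} for the metrizability of $K\backslash G$; (ii) gets neutrality of $K$ along the chain $r$-pseudocompact $\Rightarrow$ bounded \cite[Proposition 2.1]{HST} $\Rightarrow$ precompact \cite[Proposition 6.10.2]{AT1} $\Rightarrow$ neutral \cite[Lemma 2.8]{LX}; (iii) concludes from Proposition~\ref{pro3} only that $K\backslash G/H$ is \emph{pseudo}metrizable, and then obtains the genuinely metrizable $M$ by composing with the metric-identification quotient of Proposition~\ref{pro4.1} --- note that your $M=K\backslash G/H$ would require the separated (strongly neutral) case of Proposition~\ref{pro3}, which is exactly what is missing; and (iv) proves the $r$-pseudocompactness of $f^{-1}(F)$ by a direct computation with the concrete nested sets $KV_nx_0$, using $\overline{KV_{n+2}x_0}\subseteq V_nx_0$ and $\bigcap_{n\in\omega}V_nx_0=Kx_0$ to trap the accumulation point in the fibre, and the saturation identity $f^{-1}(f(\pi_1(KV_nx_0)))=\pi_1(KV_nx_0)$ in place of any closedness of $f$ (which, unlike in Theorems~\ref{th}--\ref{th3}, is never asserted here, since Proposition~\ref{Pro1} needs $K$ strongly neutral). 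To repair your write-up you would either have to prove the base property at $K$ from (pp) alone --- which the above density obstruction strongly suggests is not provable by your method --- or switch to the paper's precompactness route.
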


\begin{proof}
By the homogeneity of $G$, we can assume that the identity $e$ is a pseudocompact point. Then there is a sequence $\{U_n:n\in\omega\}$ of open neighborhoods of $e$ in $G$ satisfying the condition: (pp) every sequence $\{W_n:n\in \omega\}$ of non-empty open sets in $G$ such that $W_n\subseteq U_n$ for each $n\in \omega$ has a point of accumulation in $G$. We define by induction a sequence $\{V_n:n\in\omega\}$ of symmetric open neighborhoods of $e$ in $G$ satisfying the following conditions:
\begin{enumerate}
\item[(1)] $V_0\subseteq  U_0$;
\item[(2)] $V_{n+1}^2\subseteq V_{n}$ and $V_{n+1}\subseteq U_{n+1}$ for each $n\in \omega$.
\end {enumerate}

Put $K=\bigcap_{n\in \omega}V_n$. Since $V_{n+1}^2\subseteq V_{n}$ and $V_n$ is a symmetric open neighborhood of $e$ for each $n\in \omega$, one can easily show that $K$ is a closed $r$-pseudocompact subgroup in $G$.

Let $q:G\rightarrow K\backslash G$ and $\pi:G\rightarrow K\backslash G/H$ are the natural quotient mappings. Then there is a continuous open mapping $q_K: K\backslash G\rightarrow K\backslash G/H$ defined by $q_K(q(x))=\pi(x)$ for each $x\in G$, because $\pi$ and $q$ are continuous and open by Proposition \ref{pro1}.

\[
\xymatrix{ G\ar@{>}[d]_{q} \ar@{>}[dr]^{\pi}
  \\
 K\backslash G\ar@{>}[r]^{q_K}  & K\backslash G/H
}
\]

In the proof of (1)$\Rightarrow$ (2) in \cite[Theorem 2.9]{LX}, it has proved that $K\backslash G$ is metrizable. Since $K$ is an $r$-pseudocompact subgroup, $K$ is a bounded subgroup in $G$ by \cite[Proposition 2.1]{HST}. It follows that $K$ is precompact from the fact that  every bounded subset of a topological group is precompact \cite[Proposition 6.10.2]{AT1}. By \cite [Lemma 2.8]{LX} , $K$ is a neutral subgroup. Thus, from Proposition \ref{pro3} it follows that $K\backslash G/H$ is pseudometrizable.

Let $\pi_1:G\rightarrow G/H$ and $p:G\rightarrow K\backslash G/H$ be natural quotient mappings. Define a natural mapping $f:G/H\rightarrow K\backslash G/H$ by $f(\pi_1(x))=p(x)$ for each $x\in G$.

\[
\xymatrix{ G\ar@{>}[r]^{\pi_1} \ar@{>}[dr]^{p}
& G/H \ar@{>}[d]^{f}   \\
   & K\backslash G/H
}
\]

By Proposition \ref{pro4.1}, it is enough to show that $f$ is open and satisfies that $f^{-1}(F)$ is $r$-pseudocompact in $G/H$ for each $r$-pseudocompact set $F$ in $ K\backslash G/H$.

Clearly, $f$ is open. Suppose that the infinite family $\gamma=\{O_n:n\in\omega\}$ of open sets in $G/H$ such that $O_n\cap f^{-1}(F)\neq\emptyset$, for each $O_n\in \gamma$. Then the infinite family $\gamma_1=\{f(O_n):n\in\omega\}$ of open sets in $K\backslash G/H$ has an accumulation point $z_0$ in $F$, because $F$ is $r$-pseudocompact in $ K\backslash G/H$. Choose a point $x_0$ in $G$ such that $p(x_0)=z_0$. We claim that $\pi_1(KV_nx_0)$ joints with infinite elements in $\gamma$ for each $n\in\omega$. Indeed, if not, there is $\pi_1(KV_{n_0}x_0)$ joints with only finite elements in $\gamma$. Then $f(\pi_1(KV_{n_0}x_0))$ is an open neighborhood of $z_0$ such that $f(\pi_1(KV_{n_0}x_0))$ joints with only finite elements in $\gamma_1$, because

\begin{align*}
f^{-1}(f(\pi_1(KV_{n}x_0)))&=f^{-1}(p(KV_{n}x_0))\\
&=\pi_1(p^{-1}(p(KV_{n}x_0)))\\
&=\pi_1(KV_{n}x_0H)\\
&=\pi_1(KV_{n}x_0)
\end{align*}
holds for each $n\in \omega$. This is a contradiction with $z_0$ being an accumulation point of $\gamma_1$. Without loss of generality, we assume that $\pi_1(KV_{n}x_0)\cap O_n\neq\emptyset$ holds for each $n\in \omega$. One can easily show that the family $\{KV_n:n\in \omega\}$ satisfies the condition (pp), so does the family $\{KV_nx_0:n\in \omega\}$. Thus the family $\{KV_{n}x_0\cap \pi_1^{-1}(O_n): n\in \omega\}$ has an accumulation point $y_0$ in $G$, because $KV_{n}x_0\cap \pi_1^{-1}(O_n)$ is open and contained in $KV_nx_0$ for each $n\in \omega$. We claim that $y_0\in Kx_0$. Indeed, since $$\overline{KV_{n+2}x_0}\subseteq \overline{V_{n+2}^2x_0}\subseteq \overline{V_{n+1}x_0} \subseteq  V_{n+1}V_{n+1}x_0 \subseteq  V_nx_0$$
holds for each $n\in\omega$, we have that $$y_0\in\bigcap_{n\in \omega}\overline{KV_{n}x_0} \subseteq \bigcap_{n\in \omega}V_nx_0= (\bigcap_{n\in \omega}V_n)x_0=Kx_0.$$
Thus, $\pi_1(y_0)$ is an accumulation point of the family $\{\pi_1(KV_{n}x_0)\cap O_n: n\in \omega\}$ contained in $\pi_1(Kx_0)$. Since, by $z_0\in F$, $$f^{-1}(z_0)=\pi_1(p^{-1}(z_0))=\pi_1(Kx_0H)=\pi_1(Kx_0)\subseteq f^{-1}(F),$$
we have that $\pi_1(y_0)\in f^{-1}(F)$. Thus we have proved that the family $\{O_n:n\in \omega\}$ has an accumulation point $\pi_1(y_0)$ contained in $f^{-1}(F)$.
\end{proof}

\begin{proposition}\label{pro3.2}
Let $f$ be a continuous open mapping from $X$ onto a first-countable space $Y$ such that $f^{-1}(F)$ is $r$-pseudocompact in $X$ for each $r$-pseudocompact set $F$ in $Y$. Then $X$ is a pointwise pseudocompact-space.
\end{proposition}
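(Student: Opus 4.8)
This is the converse direction of the machinery developed in Theorem~\ref{th4}: I want to show that the existence of such a map $f$ forces every point of $X$ to be a pseudocompactness point. The natural strategy is to pull back a countable base in $Y$ and verify that its preimage is a witnessing sequence for the pseudocompactness condition (pp). So, fixing $x\in X$, I would take a countable neighborhood base $\{U_n:n\in\omega\}$ at $f(x)$ in $Y$ (available since $Y$ is first-countable), and claim that $\{f^{-1}(U_n):n\in\omega\}$ witnesses that $x$ is a pseudocompactness point.

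**The key steps.** To verify (pp), I would take any sequence $\{W_n:n\in\omega\}$ of non-empty open sets in $X$ with $W_n\subseteq f^{-1}(U_n)$ for each $n$, and produce an accumulation point. First I would push these forward: since $f$ is open, $\{f(W_n):n\in\omega\}$ is a sequence of non-empty open sets in $Y$ with $f(W_n)\subseteq U_n$. Because $\{U_n:n\in\omega\}$ is a base at $f(x)$, every neighborhood of $f(x)$ contains some $U_n\supseteq f(W_n)$, so $f(x)$ is an accumulation point of the family $\{f(W_n):n\in\omega\}$; in particular the singleton $\{f(x)\}$ — or a suitable small compact-like set around it — is $r$-pseudocompact in $Y$. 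I would then set $F=\{f(x)\}$ (which is trivially $r$-pseudocompact) and invoke the hypothesis to conclude that $f^{-1}(F)=f^{-1}(f(x))$ is $r$-pseudocompact in $X$. The crux is then to show that the family $\{W_n:n\in\omega\}$ accumulates \emph{at} a point of $f^{-1}(f(x))$: each $W_n$ meets $f^{-1}(f(x))$'s neighborhood system in the sense that $f(W_n)\subseteq U_n$ clusters at $f(x)$, so by the definition of $r$-pseudocompactness applied to the open family $\{W_n:n\in\omega\}$ (each $W_n$ being an open set whose image approaches $f(x)$), I extract an accumulation point $y_0$ of $\{W_n\}$ lying in $f^{-1}(f(x))$.

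**The main obstacle.** The delicate point is matching the definition of $r$-pseudocompactness precisely: that definition requires the \emph{open sets themselves} to meet the set $F$ (i.e. $W\cap A\neq\emptyset$), whereas what I have immediately is only that $f(W_n)\to f(x)$, i.e. the images cluster. I cannot directly assert $W_n\cap f^{-1}(f(x))\neq\emptyset$. The right way around this is to argue as in Theorem~\ref{th4}: I should not apply $r$-pseudocompactness to the $W_n$ directly, but rather use that the images $\{f(W_n)\}$ cluster at $f(x)$, hence $\{f(W_n)\}$ is an infinite family of open sets each meeting the $r$-pseudocompact set $\{f(x)\}$ (after passing to those $f(W_n)$ that actually contain $f(x)$, which holds cofinally since $f(W_n)\subseteq U_n$ and the $U_n$ shrink to $f(x)$ — though $f(x)\in f(W_n)$ need not hold, so instead I use that $f(x)$ is a genuine accumulation point of the family). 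Then $f^{-1}(\{f(x)\})$ being $r$-pseudocompact in $X$, together with the openness of $f$ ensuring the $W_n$ faithfully cover neighborhoods of $f^{-1}(f(x))$, yields the accumulation point $y_0\in f^{-1}(f(x))\subseteq X$ of the original family $\{W_n:n\in\omega\}$. This accumulation point is exactly what (pp) demands, so $x$ is a pseudocompactness point and $X$ is pointwise pseudocompact. I expect this bookkeeping — translating between clustering of images and the formal $r$-pseudocompactness meeting-condition — to be the only real subtlety; everything else is a routine transfer of clustering across the continuous open map $f$.
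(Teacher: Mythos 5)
There is a genuine gap, and it sits exactly at the step you flagged as ``the main obstacle'' --- your workaround does not work. You set $F=\{f(x)\}$ and want an accumulation point of $\{W_n:n\in\omega\}$ in $f^{-1}(f(x))$ from the $r$-pseudocompactness of $f^{-1}(\{f(x)\})$. But the definition of $r$-pseudocompactness only applies to families of open sets each of which \emph{meets} the set in question, and nothing forces $W_n\cap f^{-1}(f(x))\neq\emptyset$: the $W_n$ are arbitrary non-empty open subsets of $f^{-1}(U_n)$ and may lie entirely over points $y\neq f(x)$, so they can stay far from the fiber $f^{-1}(f(x))$. Your fallback claims are unsupported --- $f(x)\in f(W_n)$ need not hold for even one $n$ (you concede this yourself), and openness of $f$ does not make the $W_n$ ``cover neighborhoods of $f^{-1}(f(x))$'' in any sense. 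So the sentence ``yields the accumulation point $y_0\in f^{-1}(f(x))$'' is an assertion, not an argument. Note also that restricting the hypothesis to singletons $F=\{f(x)\}$ genuinely weakens it (it then says only that fibers are $r$-pseudocompact), so no amount of bookkeeping can close the gap along this route: the strength of the hypothesis for nontrivial $F$ must be used.

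The missing idea, which is the paper's proof, is to build $F$ as a convergent sequence together with its limit. Take the base $\{U_n:n\in\omega\}$ at $f(x)$ \emph{decreasing}. Given non-empty open $W_n\subseteq f^{-1}(U_n)$, pick $y_n\in f(W_n)$; since $f(W_n)\subseteq U_n$ and the $U_n$ form a decreasing base, $y_n\to f(x)$. Put $F=\{y_n:n\in\omega\}\cup\{f(x)\}$; this is compact (a convergent sequence with its limit), hence $r$-pseudocompact in $Y$. Now the hypothesis gives that $f^{-1}(F)$ is $r$-pseudocompact in $X$, and --- this is the point your version lacked --- each $W_n$ genuinely meets $f^{-1}(F)$: choose $w_n\in W_n$ with $f(w_n)=y_n\in F$. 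So the definition of $r$-pseudocompactness applies directly to the family $\{W_n:n\in\omega\}$ and yields an accumulation point in $f^{-1}(F)$ (not necessarily in the single fiber $f^{-1}(f(x))$, which is another sign your intended conclusion was too strong). This verifies condition (pp) at $x$ and finishes the proof.
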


\begin{proof}
Take a point $x\in X$ and assume that a decreasing family $\{U_n:n\in\omega\}$ is a base at $f(x)$. We claim that the family $\{f^{-1}(U_n):n\in \omega\}$ satisfies the condition (pp): every sequence $\{V_n:n\in \omega\}$ of non-empty open sets in $X$ such that $V_n\subseteq f^{-1}(U_n)$ for each $n\in \omega$ has a point of accumulation in $X$. Clearly, $f(x)$ is an accumulation of the family $\{f(V_n):n\in \omega\}$. Then one can find a sequence $\{y_n\}_{n\in \omega}$ in $Y$ such that the sequence $\{y_n\}_{n\in \omega}$ converges to the point $f(x)$ and joints with infinitely elements of the family $\{f(V_n):n\in \omega\}$. Put $F=\{y_n:n\in \omega\}\cup \{f(x)\}$. Clearly, the set $F$ is $r$-pseudocompact in $Y$, so $f^{-1}(F)$ is an $r$-pseudocompact set in $X$ such that $f^{-1}(F)$ joints with infinitely elements of the family $\{V_n:n\in \omega\}$. Thus, there is an accumulation of the family $\{V_n:n\in \omega\}$ in $f^{-1}(F)$. This completes the proof.
\end{proof}

From Theorem \ref{th4} and Proposition \ref{pro3.2} it follows the following result:

\begin{corollary}\cite[Theorem 2.9]{LX}
A topological group $G$ is a pointwise pseudocompact-space if and only if there is a continuous open mapping $f$ from $G$ onto a metrizable space $M$ such that $f^{-1}(F)$ is a $r$-pseudocompact set in $G$ for each $r$-pseudocompact set $F$ in $M$.
\end{corollary}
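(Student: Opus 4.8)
The plan is to obtain the corollary by combining Theorem \ref{th4} with Proposition \ref{pro3.2}, specialising the former to the trivial closed subgroup. Since the statement is a biconditional, I would treat the two implications separately.

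For the forward implication, suppose $G$ is a pointwise pseudocompact space. Then every point of $G$ is a pseudocompactness point; in particular $G$ has (at least one) pseudocompactness point, which is exactly the hypothesis of Theorem \ref{th4}. Since $G$ is Hausdorff by our standing assumption, the trivial subgroup $H=\{e\}$ is closed in $G$, and the quotient space $G/H=G/\{e\}$ is canonically homeomorphic to $G$ itself. Applying Theorem \ref{th4} with this $H$ therefore yields a metrizable space $M$ together with a continuous open mapping $f:G\to M$ such that $f^{-1}(F)$ is $r$-pseudocompact in $G$ for every $r$-pseudocompact set $F\subseteq M$, which is precisely the asserted mapping.

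For the reverse implication, suppose that such a continuous open mapping $f:G\to M$ onto a metrizable space $M$ exists, with $f^{-1}(F)$ being $r$-pseudocompact in $G$ whenever $F$ is $r$-pseudocompact in $M$. Every metrizable space is first-countable, so $M$ is first-countable, and hence Proposition \ref{pro3.2} applies verbatim with $X=G$ and $Y=M$: it gives that $G$ is a pointwise pseudocompact space. This closes the second implication and completes the argument.

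Since the whole corollary is just an instantiation of two results already proved, I do not anticipate any serious obstacle. The only point requiring a moment's care is the forward direction's reduction to $H=\{e\}$: one must note that ``pointwise pseudocompact'' supplies the single pseudocompactness point demanded by Theorem \ref{th4}, and that passing to $G/\{e\}\cong G$ faithfully transports the mapping and its $r$-pseudocompact fibre-lifting property back to $G$. Everything else is a direct citation of the two ingredients.
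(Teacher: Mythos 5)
Your proposal is correct and matches the paper exactly: the paper derives this corollary precisely by applying Theorem~\ref{th4} with the closed trivial subgroup $H=\{e\}$ (so that $G/H\cong G$) for the forward direction, and Proposition~\ref{pro3.2} with $Y=M$ first-countable for the converse. Your added remark that pointwise pseudocompactness supplies the single pseudocompactness point required by Theorem~\ref{th4} is the right (and only) point of care, and it is handled correctly.
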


\section{Other related results}

\begin{theorem}\label{TT}
Let $H$ be a closed neutral subgroup of a topological group $G$ such that $H$ contains a subgroup $K$ such that the quotient space $G/K$ is first-countable. Then the quotient space $G/H$ is metrizable.
\end{theorem}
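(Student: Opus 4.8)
The plan is to equip $G/H$ with a separated uniformity possessing a countable base and then invoke Theorem \ref{T1}. Since $H$ is neutral, Theorem \ref{th11} applied with the trivial group as the left factor (equivalently, Remark \ref{R}(1)) shows that $\mathscr{B}_{G/H}^{l}=\{E_V^l:V\in\mathscr{N}_s(e)\}$ is a base for a left uniformity $\mathcal{U}_{G/H}^l$ on $G/H$ inducing the quotient topology; and because $H$ is closed, this uniformity is separated. By Theorem \ref{T1} it therefore suffices to exhibit a \emph{countable} subfamily of $\mathscr{B}_{G/H}^l$ that is still a base for $\mathcal{U}_{G/H}^l$.

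The first step I would record is the elementary comparison that, for symmetric open neighborhoods $W,V$ of $e$, one has $E_W^l\subseteq E_V^l$ if and only if $WH\subseteq VH$. Indeed, writing $\pi\colon G\to G/H$ for the canonical map, $E_W^l[\pi(x)]=\pi(xW)=\{yH:y\in xWH\}$, so $E_W^l\subseteq E_V^l$ amounts to $xWH\subseteq xVH$ for every $x\in G$, i.e. to $WH\subseteq VH$. Thus producing a countable base for $\mathcal{U}_{G/H}^l$ is the same as producing a countable family $\{V_n:n\in\omega\}$ of neighborhoods of $e$ such that every $VH$ contains some $V_nH$.

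Next I would extract such a family from the hypothesis on $K$. Let $\pi_K\colon G\to G/K$ be the canonical quotient map, which is open by Proposition \ref{pro1}. Since $G/K$ is first-countable, using the openness of $\pi_K$ one obtains a countable family $\{V_n:n\in\omega\}$ of symmetric open neighborhoods of $e$ such that $\{\pi_K(V_n)\}$ is a neighborhood base at $\pi_K(e)$; equivalently, for every neighborhood $U$ of $e$ there is $n$ with $V_nK\subseteq UK$. Here is the one place the containment $K\subseteq H$ enters, through the identity $KH=H$, which gives $V_nH=V_n(KH)=(V_nK)H$ for every $n$. Given an arbitrary $V\in\mathscr{N}_s(e)$, choose $n$ with $V_nK\subseteq VK$; then
$$V_nH=(V_nK)H\subseteq (VK)H=V(KH)=VH,$$
so $E_{V_n}^l\subseteq E_V^l$ by the comparison above. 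Hence $\{E_{V_n}^l:n\in\omega\}$ is a countable base for $\mathcal{U}_{G/H}^l$, and Theorem \ref{T1} yields that $G/H$ is metrizable.

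The one genuinely substantive move is the passage from first-countability of $G/K$ to a countable base of the uniformity on $G/H$: first-countability of a coset space is really the statement that the sets $V_nK$ are cofinal (under reverse inclusion) among all $UK$, and the point is that the factorization $V_nH=(V_nK)H$ transports this cofinality from the $K$-level to the $H$-level while requiring nothing of $K$ beyond $K\subseteq H$. Everything else — the existence and separatedness of the left uniformity and the final metrization — is read off directly from Theorem \ref{th11}, Remark \ref{R} and Theorem \ref{T1}.
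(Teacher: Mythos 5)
Your proof is correct and takes essentially the same route as the paper's: you build the separated left uniformity $\mathcal{U}_{G/H}^l$ on $G/H$ from Remark \ref{R} (the paper cites clause (2) but means (1), as you do), extract a countable family $\{V_n:n\in\omega\}$ of symmetric neighborhoods with $\{\pi_K(V_n)\}$ a base at $\pi_K(e)$, and use exactly the paper's key computation $V_nH=(V_nK)H\subseteq(VK)H=VH$ via $KH=H$ before invoking Theorem \ref{T1}. The only cosmetic difference is that you isolate the comparison ``$WH\subseteq VH$ implies $E_W^l\subseteq E_V^l$'' as an explicit preliminary step (note only this implication is needed and used; your stated converse plays no role), whereas the paper verifies the same containment inline with coset representatives.
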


\begin{proof}
Since $H$ is a closed neutral subgroup, by (2) in Remark \ref{R} the topology on the quotient space $G/H$ is induced by the separated uniformity $\mathcal {U}_{G/H}^l$ and $\mathscr{B}_{G/H}^l$ is a base for $\mathcal {U}_{G/H}^l$. Thus, to show that the quotient space $G/H$ is metrizable, it is only to show that $\mathcal {U}_{G/H}^l$ has a countable base by Theorem \ref{T1}.
Let $q:G\rightarrow G/K$ and $\pi:G\rightarrow G/H$ are the natural quotient mapping. Since the quotient space $G/K$ is first-countable, there is a countable family $\gamma=\{O_n:n\in \omega\}$ of symmetric open neighborhoods of the identity $e$ in $G$ such that $\{q(O_n):n\in \omega\}$ is a base at $q(e)$. We claim that every element of $\mathscr{B}_{G/H}^l$ contains an element of $\{E_{O_n}^l:O_n\in \gamma\}$, where $E_{O_n}^l=\{(\pi(x),\pi(y):)\pi(y)\in \pi(xO_n)\}$ for all $n$. This implies that $\mathcal {U}_{G/H}^l$ has a countable base, because $\mathscr{B}_{G/H}^l$ is a base for $\mathcal {U}_{G/H}^l$.

Indeed, take arbitrary $E_V^l\in \mathscr{B}_{G/H}^l$, where $E_V^l=\{(\pi(x),\pi(y):)\pi(y)\in \pi(xV)\}$ and $V\in \mathscr{N}_s(e)$. Then there is $O_n\in \gamma$ such that $q(O_n)\subseteq q(V)$, which implies that $O_nK\subseteq VK$. We claim that $E_{O_n}^L\subseteq E_V^l$. Take any $(\pi(x),\pi(y))\in E_{O_n}^L$, which implies that $\pi(y)\in \pi(xO_n)$. Thus, we have that $y\in yH\subseteq xO_nH$. Since we have proved that $O_nK\subseteq VK$, we have that $xO_nK\subseteq xVK$, which implies that $xO_nKH\subseteq xVKH$. This implies that $xO_nH\subseteq xVH$ by $K\subseteq H$. Therefore, we have that $y\in xO_nH\subseteq xVH$, which implies that $\pi(y)\in \pi(xVH)=\pi(xV)$. Thus $(\pi(x),\pi(y))\in E_V^l$.
\end{proof}

Since the quotient space $H\backslash G$ and $G/H$ are homeomorphic for every subgroup $H$ in a topological group $G$, from Theorem \ref{TT} it follows:
\begin{corollary}\cite[Theorem 3.1]{FST}
Let $H$ be a closed neutral subgroup of a topological
group $G$. If the space $H\backslash G$ is first-countable, then it is metrizable.
\end{corollary}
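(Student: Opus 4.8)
The plan is to reduce the statement to Theorem \ref{TT} by transporting both the hypothesis and the conclusion across the natural homeomorphism between the right coset space $H\backslash G$ and the left coset space $G/H$. First I would observe that the inversion map $\iota\colon G\to G$, $\iota(x)=x^{-1}$, is a homeomorphism of $G$ that carries each right coset $Hx$ to the left coset $(Hx)^{-1}=x^{-1}H$ (using $H^{-1}=H$). Thus $\iota$ descends to a bijection $\bar\iota\colon H\backslash G\to G/H$, $\bar\iota(Hx)=x^{-1}H$, satisfying $\bar\iota\circ q=\pi\circ\iota$, where $q\colon G\to H\backslash G$ and $\pi\colon G\to G/H$ are the canonical quotient mappings. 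Since these quotient mappings are open and continuous, $\pi\circ\iota$ is continuous and open, and because $q$ is a quotient mapping $\bar\iota$ is continuous; the same reasoning applied to $\iota^{-1}=\iota$ shows that $\bar\iota^{-1}$ is continuous. Hence $\bar\iota$ is a homeomorphism of $H\backslash G$ onto $G/H$.

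With this identification in hand, the argument is short. By hypothesis $H\backslash G$ is first-countable, so its homeomorphic copy $G/H$ is first-countable as well. Now I would invoke Theorem \ref{TT} with the trivial choice $K=H$: the closed neutral subgroup $H$ contains itself as a subgroup, and the quotient $G/K=G/H$ is first-countable, so Theorem \ref{TT} yields that $G/H$ is metrizable. Transporting this back along the homeomorphism $\bar\iota$, the space $H\backslash G$ is metrizable, which is precisely the assertion of the corollary.

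Since all the substantive work has already been carried out in Theorem \ref{TT}, I do not anticipate a genuine obstacle. The only point deserving a moment's care is the legitimacy of applying Theorem \ref{TT} with $K=H$, so that its hypothesis that ``$G/K$ is first-countable'' collapses exactly to the first-countability of $G/H$ supplied by $\bar\iota$. In effect, the inversion homeomorphism $H\backslash G\cong G/H$ is the single structural ingredient that allows the left-coset Theorem \ref{TT} to absorb the right-coset formulation of the corollary.
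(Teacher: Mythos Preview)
Your proposal is correct and follows exactly the paper's approach: the paper simply notes that $H\backslash G$ and $G/H$ are homeomorphic (via inversion, as you spell out) and then invokes Theorem~\ref{TT}. Your explicit choice $K=H$ is the natural specialization implicit in the paper's one-line derivation.
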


\begin{corollary}
Let $H$ be a closed neutral subgroup of a pointwise pseudocompact topological group $G$. Then the quotient space $G/H$ is metrizable if and only if $G/H$ has a $G_\delta$-point.
\end{corollary}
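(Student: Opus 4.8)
The plan is to prove the two implications separately; the forward direction is immediate and the converse carries all the content.

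\emph{Forward direction.} If $G/H$ is metrizable, then it is first-countable, so every point of $G/H$ is a $G_\delta$-point. This requires nothing further.

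\emph{Converse (the substantive direction).} Suppose $G/H$ has a $G_\delta$-point. Since $G/H$ is homogeneous (each left translation $xH\mapsto gxH$, for $g\in G$, is a homeomorphism of $G/H$), every point of $G/H$ is a $G_\delta$-point; in particular the point $x_0=\pi(e)=H$ is a $G_\delta$-point, where $\pi:G\rightarrow G/H$ is the canonical mapping. Writing $\{x_0\}=\bigcap_{n\in\omega}O_n$ with each $O_n$ open in $G/H$ and setting $P_n=\pi^{-1}(O_n)$, I obtain open neighborhoods $P_n$ of $e$ in $G$ with $H=\pi^{-1}(x_0)=\bigcap_{n\in\omega}P_n$; that is, $H$ is a $G_\delta$-set in $G$. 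At the same time, since $G$ is pointwise pseudocompact, the identity $e$ is a pseudocompactness point (by the homogeneity of $G$), so there is a sequence $\{U_n:n\in\omega\}$ of open neighborhoods of $e$ witnessing condition (pp).

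I would then reproduce the inductive construction in the proof of Theorem \ref{th4}, but tightened by the $G_\delta$ data: choose symmetric open neighborhoods $V_n$ of $e$ with $V_0\subseteq U_0\cap P_0$ and, for each $n$, $V_{n+1}^2\subseteq V_n$ together with $V_{n+1}\subseteq U_{n+1}\cap P_{n+1}$. Put $K=\bigcap_{n\in\omega}V_n$. As before, the relations $V_{n+1}^2\subseteq V_n$ and the symmetry of the $V_n$ make $K$ a closed subgroup of $G$, while the extra constraint $V_n\subseteq P_n$ now forces $K\subseteq\bigcap_{n\in\omega}P_n=H$. Because the hypotheses actually used in Theorem \ref{th4} are retained, namely $V_n\subseteq U_n$ (so that (pp) applies) and $V_{n+1}^2\subseteq V_n$, the same argument shows that $K$ is an $r$-pseudocompact subgroup and that $K\backslash G$ is metrizable; since $K\backslash G$ and $G/K$ are homeomorphic, $G/K$ is first-countable. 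Finally I would invoke Theorem \ref{TT}: $H$ is a closed neutral subgroup containing the subgroup $K$ for which $G/K$ is first-countable, whence $G/H$ is metrizable, completing the converse.

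I expect the main obstacle to be the construction step, specifically verifying that intersecting the $V_n$ with the $G_\delta$-sets $P_n$ (to force $K\subseteq H$) does not spoil the conclusion that $K\backslash G$ is metrizable. This amounts to checking that the argument behind Theorem \ref{th4} (equivalently \cite[Theorem 2.9]{LX}) relies only on the conditions $V_n\subseteq U_n$ and $V_{n+1}^2\subseteq V_n$, both of which survive the additional requirement $V_n\subseteq P_n$, rather than on any extremal or maximal choice of the $V_n$.
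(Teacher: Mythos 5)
Your proof is correct and is essentially the derivation the paper intends: the corollary is stated without proof right after Theorem \ref{TT}, and the expected argument is exactly your combination of the construction from Theorem \ref{th4} (with the symmetric neighborhoods $V_n$ additionally shrunk into the preimages $P_n$ of the $G_\delta$-sets, forcing $K=\bigcap_{n\in\omega}V_n\subseteq H$) with Theorem \ref{TT} applied to the closed neutral subgroup $H\supseteq K$ once $K\backslash G$, and hence $G/K$, is known to be metrizable. Your closing observation --- that the metrizability of $K\backslash G$ in Theorem \ref{th4} (via \cite[Theorem 2.9]{LX}) uses only $V_n\subseteq U_n$, symmetry, and $V_{n+1}^2\subseteq V_n$, all of which survive the extra requirement $V_n\subseteq P_n$ --- is precisely the point that makes the corollary go through.
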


\begin{corollary}\label{CC}
Let $H$ be a closed neutral subgroup of a topological group $G$ with a q-point. Then the quotient space $G/H$ is metrizable if and only if $G/H$ has a $G_\delta$-point.
\end{corollary}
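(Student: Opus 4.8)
The plan is to dispose of the easy implication first and then reduce the substantive one to Theorem~\ref{TT}. If $G/H$ is metrizable, then it is first-countable and $T_1$, so every point is a $G_\delta$-point; in particular $G/H$ has a $G_\delta$-point. The content is therefore the converse.

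For the converse, I would use homogeneity twice. Since $G$ is homogeneous, assume the identity $e$ is a $q$-point and fix a $q$-sequence $\{U_n : n\in\omega\}$ at $e$. Since $G/H$ is homogeneous under the left translations $xH \mapsto gxH$, the hypothesis that $G/H$ has a $G_\delta$-point lets us assume that the coset $eH$ is a $G_\delta$-point, say $\{eH\} = \bigcap_{n\in\omega}\widetilde{O}_n$ with each $\widetilde{O}_n$ open in $G/H$. Writing $O_n = \pi^{-1}(\widetilde{O}_n)$, where $\pi:G\to G/H$ is the canonical map, each $O_n$ is an open neighbourhood of $e$ and $\bigcap_{n\in\omega}O_n = \pi^{-1}(\{eH\}) = H$.

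Next I would run the inductive construction from the proof of Theorem~\ref{th}, but with the extra requirement that each approximating set lands inside the corresponding $O_n$. Concretely, choose symmetric open neighbourhoods $V_n$ of $e$ so that $V_0\subseteq U_0\cap O_0$, and $V_{n+1}^2\subseteq V_n$ with $V_{n+1}\subseteq U_{n+1}\cap O_{n+1}$ for each $n$. Put $K=\bigcap_{n\in\omega}V_n$. The relations $V_{n+1}^2\subseteq V_n$ with $V_n$ symmetric make $K$ a subgroup and give $\overline{V_{n+1}}\subseteq V_n$, hence $\bigcap_n V_n=\bigcap_n\overline{V_n}$; since $V_n\subseteq U_n$, condition (1) of Lemma~\ref{Lema2} holds, so by that lemma $\{V_n\}$ is a countable neighbourhood base at $K$ and $K$ is countably compact. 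Lemma~\ref{Lema1} then makes $K$ strongly neutral with $K\backslash G$ metrizable, and since $G/K$ is homeomorphic to $K\backslash G$ via inversion, $G/K$ is metrizable, in particular first-countable. The crucial point of the construction is that $V_n\subseteq O_n$ forces $K=\bigcap_n V_n\subseteq\bigcap_n O_n=H$.

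Finally, $H$ is a closed neutral subgroup containing the subgroup $K$, and $G/K$ is first-countable, so Theorem~\ref{TT} yields that $G/H$ is metrizable, completing the converse. I expect the only delicate step to be the simultaneous bookkeeping in the induction: the $V_n$ must respect the $q$-sequence $\{U_n\}$ (to secure countable compactness of $K$ and the base property through Lemma~\ref{Lema2}) and the $G_\delta$-representation $\{O_n\}$ of $H$ (to secure $K\subseteq H$), all while remaining symmetric with $V_{n+1}^2\subseteq V_n$; once these are arranged, the remaining verifications are routine applications of the quoted lemmas.
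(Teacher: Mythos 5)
Your proposal is correct and takes essentially the route the paper intends: Corollary~\ref{CC} is stated as a consequence of Theorem~\ref{TT}, obtained exactly as you do by translating the $G_\delta$-point to $eH$, refining the $q$-sequence with the pullbacks $O_n=\pi^{-1}(\widetilde{O}_n)$ so that the resulting closed countably compact subgroup $K=\bigcap_{n\in\omega}V_n$ (Lemmas~\ref{Lema2} and~\ref{Lema1}, as in the proof of Theorem~\ref{th}) satisfies $K\subseteq\bigcap_{n\in\omega}O_n=H$ with $G/K$ first-countable, and then applying Theorem~\ref{TT}. Your bookkeeping in the induction ($V_n$ symmetric, $V_{n+1}^2\subseteq V_n$, $V_n\subseteq U_n\cap O_n$) and the identification of $G/K$ with $K\backslash G$ via inversion are all sound, so there is nothing to correct.
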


In 2021, X. Ling et al. proved that: Let $H$ be a closed subgroup of a feathered group $G$; then the quotient space $G/H$ is metrizable
if and only if $G/H$ is first-countable \cite[Theorem 4.4]{LHL}. However, the proof seems to have a gap, because there is a separated uniformity with no countable base but it induces a metrizable topology (see \cite[Example 8.1.7]{En}). From Remarks \ref{R} and \ref{r} it follows that $\mathcal {U}_{G/K}^r$ and $\mathcal {U}_{G/K}^l$ are separated uniformities on the quotient space $G/K$ which induce the topology of $G/K$ in \cite[Theorem 4.4]{LHL}, but $\mathcal {U}_{G/K}^r$ may be have no countable base.  From Corollary \ref{CC} we have the following result:

\begin{corollary}
Let $H$ be a closed neutral subgroup of a feathered group $G$. Then the quotient space $G/H$ is metrizable
if and only if $G/H$ has a $G_\delta$-point..
\end{corollary}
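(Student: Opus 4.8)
The plan is to deduce this corollary directly from Corollary~\ref{CC}, so the only thing I genuinely need to establish is that a feathered group is a $q$-space, and in particular possesses a $q$-point. Once that is in hand, the hypotheses of Corollary~\ref{CC}, namely that $H$ is a closed neutral subgroup of a topological group $G$ with a $q$-point, are met, and the asserted equivalence ``$G/H$ is metrizable if and only if $G/H$ has a $G_\delta$-point'' follows verbatim.

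To show that a feathered group $G$ has a $q$-point I would invoke Pasynkov's theorem recalled in the Introduction: since $G$ is feathered, it contains a compact subgroup $N$ such that the quotient $G/N$ is metrizable. Let $\pi_N\colon G\to G/N$ be the canonical mapping and choose, in the metrizable space $G/N$, a decreasing countable neighbourhood base $\{O_n:n\in\omega\}$ at the point $\pi_N(e)$ with $\overline{O_{n+1}}\subseteq O_n$. Setting $V_n=\pi_N^{-1}(O_n)$ produces a decreasing family of open neighbourhoods of $N$ in $G$ which is a base at $N$; by continuity of $\pi_N$ we have $\overline{V_{n+1}}\subseteq \pi_N^{-1}(\overline{O_{n+1}})\subseteq V_n$, and since $G/N$ is $T_1$ and $\{O_n\}$ is a base at $\pi_N(e)$, it follows that $\bigcap_{n\in\omega}V_n=\bigcap_{n\in\omega}\overline{V_n}=N$.

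Now each $V_n$ is an open neighbourhood of the identity $e$, because $e\in N\subseteq V_n$. As $N$ is compact, hence countably compact, and $\{V_n:n\in\omega\}$ is a base at $N$, condition~(2) of Lemma~\ref{Lema2} holds for the decreasing sequence $\{V_n\}$. By Lemma~\ref{Lema2}, condition~(1) holds as well: every sequence $\{x_n:n\in\omega\}$ with $x_n\in V_n$ for each $n\in\omega$ has a cluster point in $G$. This says precisely that $\{V_n:n\in\omega\}$ is a $q$-sequence at $e$, so $e$ is a $q$-point of $G$; by the homogeneity of $G$ every point is then a $q$-point, and $G$ is a $q$-space.

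With $G$ thus a topological group possessing a $q$-point and $H$ a closed neutral subgroup, Corollary~\ref{CC} applies and yields the desired equivalence. The only substantive step is the reduction ``feathered $\Rightarrow$ a $q$-point exists'', and within it the one point requiring care is arranging $\bigcap_n V_n=\bigcap_n\overline{V_n}=N$ so that Lemma~\ref{Lema2} is applicable; everything else is an immediate appeal to the already-established Corollary~\ref{CC}.
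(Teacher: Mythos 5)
Your proposal is correct and follows essentially the same route as the paper, which obtains this corollary directly from Corollary \ref{CC} once one knows that a feathered group has a $q$-point (via Pasynkov's theorem, exactly as in your reduction through Lemma \ref{Lema2}). The only step you assert without justification is that $\{V_n\}=\{\pi_N^{-1}(O_n)\}$ is a base at $N$ in $G$; this requires the closedness of $\pi_N$, which holds because the compact subgroup $N$ is strongly neutral (a standard compactness argument), so Corollary \ref{cor} applies and the gap is routine to fill.
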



\section*{References}


\def\cprime{$'$}

\end{document}